\numberwithin{equation}{section}
\theoremstyle{definition}
\newtheorem{definition}{Definition}[section]
\newtheorem{remarkth}[definition]{Remark}
\newtheorem{remarkths}[definition]{Remarks}
\newtheorem{lemma}[definition]{Lemma}
\newtheorem{theorem}[definition]{Theorem}
\newtheorem{proposition}[definition]{Proposition}
\newtheorem{corollary}[definition]{Corollary}
\renewcommand{\emph}[1]{{\bfseries\itshape{#1}}}
\newcommand{\R}{\mathbb{R}}      
\newcommand{\I}{I\mkern-7muI} 
\newcommand{\g}{\mathfrak{g}}
 \def\Mbar{\bar{M}}
 \def\eps{\varepsilon}
 \def\xx{\mathbf{x}}
\begin{document}

\title[ Gauge momenta  as Casimir functions of nonholonomic systems]{Gauge momenta  as Casimir functions of nonholonomic systems}

\author{Luis Garc\'ia-Naranjo \& James Montaldi}
\thanks{This research was made possible by a Newton Advanced Fellowship from the Royal Society.}
\email{ luis@mym.iimas.unam.mx,\quad j.montaldi@manchester.ac.uk}
\address{LGN: IIMAS, UNAM, Mexico City, Mexico. \quad JM: School of Mathematics, University of Manchester, UK.}
\date{\today}

\begin{abstract}
We consider nonholonomic systems with symmetry possessing a certain type of first integrals that are linear in the velocities. We develop a systematic method for modifying the standard nonholonomic almost Poisson structure that describes the dynamics so that these integrals become Casimir functions after reduction. This explains a number of recent results on Hamiltonization of nonholonomic systems, and has consequences for the study of relative equilibria in such systems.
\end{abstract}

\maketitle

\section{Introduction}

The search for geometric structures that are invariant by the flow of nonholonomic mechanical systems has driven  a good part of the research in nonholonomic systems from the time of
Chaplygin to the present day. These range from first integrals and invariant measures, to symplectic or Poisson structures.

The difficulty of the problem arises because, in the nonholonomic case,  the addition of the D'Alembert constraint reaction forces  destroys the Hamiltonian nature of the mechanical equations of motion.
 The resulting equations  preserve the energy of the system but only allow a formulation in terms of  {\em almost} Poisson structures \cite{vdSM,Marle,Cantrijn99,Ibort} which fail to satisfy the
Jacobi identity and are not preserved by the flow.

A look at  concrete nonholonomic examples (see e.g.\ the tables in \cite{BorMam,BorMam2} for a good overview), suggests that the presence of symmetries could lead to the existence of some geometric invariants in concrete problems, but  precise results in this direction are missing.

In recent years, much effort has  been devoted to the generalisation of Noether's Theorem (which in general terms is a statement linking symmetries to first integrals) to the nonholonomic setting (see for example \cite{Agostinelli, Iliev2,Fasso1, FassoJGM}).  
Of particular interest is the case in which the symmetries of the system arise as the lift of the action of a  Lie group $G$ on the configuration space. 
 In analogy with the Hamiltonian counterpart of this situation, one would expect that  if such first  integrals exist, they are linear on the velocities and remain constants of motion in the presence of $G$-invariant force potentials.
First integrals for nonholonomic systems having these properties are known in the literature as  \emph{horizontal gauge momenta} \cite{Bates,Fasso2,Fasso3}. In this paper we
do not consider other types of linear first integrals, so we will simply call them   \emph{gauge momenta}.  

A separate line of research in the field of nonholonomic systems with symmetry has focused on the geometry of their reduction  (see \cite{Koi,BS93,BKMM,CDS-book, JoRa} and others).
As was already known to Chaplygin, it is possible that  the reduced equations of motion  allow a Hamiltonian formulation, possibly after
a {\em time reparametrisation}.  In this scenario  one says that the system admits a \emph{Hamiltonization} \cite{EhlersKoiller,FedJov,BorMamHam,BalseiroGN}. 
The  theorem of Chaplygin on the reducing multiplier  \cite{ChapRedMult}  is one of the fundamental results in this area but its direct applicability is
limited to systems with a small number of degrees of freedom, possessing an invariant measure, and with a specific type of symmetry.

\subsection*{Contributions}
In this paper, we bring together ideas and results  from the theory of gauge momenta,    from singular nonholonomic reduction, and from Poisson geometry, to produce
new results in the area of Hamiltonization. Our research was inspired by the results of Borisov, Mamaev and Kilin \cite{BorMamChap,BorMam,BorMam2} who, by explicitly finding a Poisson bracket,
 showed that numerous systems---the Chaplygin
ball, a solid of revolution rolling without slipping on a plane or on a sphere,  or a uniform sphere rolling on a surface of revolution---admit a Hamiltonization.
 All of these examples share a fundamental property: {\em they possess $G$-invariant gauge momenta that  are Casimir functions of the 
Poisson bracket  on the reduced space}.

Our main result shows that
the above situation holds in considerable generality.
Namely, we prove that
a $G$-invariant nonholonomic system possessing $\ell$ independent $G$-invariant gauge momenta,  admits a description in almost Poisson form with respect to a $G$-invariant bracket
which upon reduction has  the gauge momenta as Casimirs.  This result gives  a positive answer to a question that was originally raised in \cite{LGNThesis} (see also Section 7 in \cite{LGN10}). 
Moreover, our proof is constructive and we give explicit formulae for the
bracket in terms of a choice of configuration coordinates $q^i$ and $G$-invariant momenta $\pi_\alpha$ (defined by an equivariant moving frame $\{X_\alpha\}$ for the
constraint distribution).

In our approach, the crucial object used to obtain the almost Poisson bracket  $\Pi_{nh}^\Lambda$ with the aforementioned properties, is a $G$-invariant 3-form $\Lambda$ defined on the configuration 
space. The construction of $\Lambda$ uses the kinetic energy metric and vector fields that generate the gauge momenta in a way that closely resembles the definition of the Cartan 3-form 
on a compact Lie group. We then consider the almost Poisson bracket $\Pi_{nh}$ introduced in \cite{vdSM} and use $\Lambda$ to perform a 
  {\em gauge transformation}   (as in \cite{BalseiroGN, SeveraWeinstein}).

 The need  to perform  a gauge transformation of $\Pi_{nh}$ to guarantee that the gauge momenta are Casimir functions
 of the reduced bracket is related to  a fundamental property of their generators: they are vector fields on configuration space which 
 are tangent to the group orbits but they are usually {\em not} infinitesimal generators of the action. Instead, they are so-called \emph{gauge symmetries} which are configuration-dependent linear combinations of the infinitesimal generators. Because of the variation of the coefficients, the corresponding gauge momenta in general fail to be Casimirs of the reduction of $\Pi_{nh}$. This scenario is not encountered in holonomic mechanics. See Section~\ref{sec:why gauge?}.

If the level sets of the gauge momenta on the reduced space are 2-dimensional, our result leads to a direct Hamiltonization of the problem in terms of a rank 2 Poisson structure 
(Corollary \ref{C:Rank2} in the text).  Two examples where this arises are the motion of a convex solid of revolution that rolls on a plane or on a sphere, and the motion of  a uniform sphere rolling on a convex surface of revolution. We stress that these
 rank 2 Poisson structures on the reduced space arise as the projection of the almost Poisson bracket $\Pi_{nh}^\Lambda$ defined on the unreduced space, 
and some dynamical properties may be deduced from this fact. 
In particular, certain components of the tensor $\Pi_{nh}^\Lambda$ are preserved by the flow of the unreduced equations  (Corollary \ref{C:Rank2}).

Another dynamical consequence may be drawn from our construction. If the characteristic distribution defined by the bracket induced by $\Pi_{nh}^\Lambda$ on the reduced space is integrable and the leaves coincide with the level sets of the resulting Casimirs, 
as is the case in all of the examples mentioned above, then the relative equilibria of the system are characterised as the critical points of the energy restricted to the level sets of the gauge momenta.

We mention  finally that it would be interesting to extend  the results of our paper to produce Casimir functions of the reduced bracket in the case where the gauge momenta are not $G$-invariant. Such Casimir functions would necessarily be nonlinear functions of the velocities of the system. This type of 
construction  could lead to a geometric understanding of the Hamiltonization of other examples like  
the multi-dimensional versions of the Chaplygin sphere and the Veselova system \cite{Jovan,FedJov},
or the nonholonomic hinge \cite{Biz}.  We will consider this question separately.

\subsection*{Previous work} The origin of the rank 2 Poisson  brackets given in  \cite{BorMam,BorMam2} for the reduced dynamics of the nonholonomic systems mentioned above
has  been considered by a number of authors using different approaches. 
In  \cite{FassoRank2} Fass\`o, Giacobbe and Sansonetto indicate that their existence is a consequence of
 the generic periodicity of the reduced dynamics. From their perspective, no insight about the dynamics of the unreduced system can be obtained from these structures. 
 This contrasts with our discussion above and with the content of Corollary \ref{C:Rank2}.

Ramos \cite{Ramos} studied these brackets from an algebraic perspective. He correctly indicates that there is no need to perform a rescaling to satisfy the Jacobi identity,
and notices that they allow an extension to the singular strata of the reduced space (that correspond to certain relative equilibria of the system). 
However, no link is made with any brackets
for the system on the unreduced space.

The treatment by Tsiganov \cite{Tsiganov} proceeds by doing the reduction in two steps and by
proposing an ansatz for a bracket at the intermediate stage. Then the author applies  `brute force calculations' hoping to  obtain brackets for the system with specific
properties. We mention that there is certain correspondence between his ansatz and with equation \eqref{E:PiBCoords} that gives the form of a bracket for the system
 that is obtained via a gauge transformation.

In \cite{Balseiro} Balseiro studies the Jacobiator of almost Poisson brackets that are obtained by gauge transformations of the nonholonomic bracket $\Pi_{nh}$ introduced in \cite{vdSM}.
An emphasis is given to the behaviour of this Jacobiator under reduction. A link between the constructions of \cite{Balseiro} and gauge momenta is suggested in \cite{BalseiroSanso}.

\subsection*{Outline}  The structure of the paper is the following. In Section \ref{S:Prelim} we give a quick review of the structure of the equations
of motion using quasi-velocities defined by moving frames. This allows us to introduce notation and  obtain working expressions for the almost Poisson bracket $\Pi_{nh}$ defined by van der Schaft and Maschke \cite{vdSM}, and known as the nonholonomic bracket.
In Section \ref{S:GaugeT} we show how, to a given a 3-form $\Lambda$ on the configuration space $Q$ one can associate an almost Poisson structure $\Pi_{nh}^\Lambda$ for the nonholonomic
dynamics that possesses the same properties as $\Pi_{nh}$. Our discussion follows the ideas of \cite{BalseiroGN, SeveraWeinstein} and gives 
explicit formulae for $\Pi_{nh}^\Lambda$. In Section 
\ref{S:GaugeReduction} we review some existing results about gauge momenta and reduction. 
For our purposes, the most convenient point of view for reduction is the one developed in  \cite{CDS-book} that applies in the case of non-free actions and follows a
Poisson perspective.

Our principal contributions are contained in Section \ref{S:Casimirs}. The material in Section~\ref{SS:AdaptedBases} establishes the preliminaries for
the formulation of  Lemma~\ref{L:Global3form} that states the existence of 
  the $G$-invariant 3-form $\Lambda$ that appears in the statement of Theorem~\ref{th:main}. 
 This theorem  states that the  gauge momenta of the system
 are Casimir functions of the reduction of 
$\Pi_{nh}^\Lambda$ and is our main result.
 Our proof is valid for free actions of $G$ on $Q$. For more general actions, we need to assume that the 3-form in 
 Lemma~\ref{L:Global3form}  admits a smooth extension to the points in $Q$ having  non-trivial isotropy (a situation that is encountered in 
 all examples that we consider). The proof of Lemma~\ref{L:Global3form} 
 is somewhat technical and is
presented in the Appendix. The special case in which the  difference between the rank of $D$ and the number of gauge momenta is less than 3,
is encountered in a great number of examples. In this case the
3-form in Lemma~\ref{L:Global3form} is unique and a local formula for the corresponding bracket  $\Pi_{nh}^\Lambda$
is presented in Section~\ref{SS:FinalObservations}. This section also contains a discussion of the need to modify the nonholonomic bracket $\Pi_{nh}$ by  $\Lambda$ and describes an open direction of research.


We treat two different examples in Sections \ref{S:ChSphere} and \ref{SS:SolidRev}. The first deals with the Chaplygin sphere and is meant to be an illustration of how the different
elements in our construction come together. In Section  \ref{SS:SolidRev} we consider  the rolling of a body of revolution on the plane. This example is more involved and shows
how our method works in the case of a non-free action. Up to an unnecessary conformal factor, we recover the bracket for the reduced system presented in \cite{BorMam}.

We would like to thank Francesco Fass\`o for comments on an early draft of this paper, and a referee for comments helping to improve the exposition.

Since completing this work, we have learned of recent work of Balseiro \cite{Balseiro2016} on the same example we treat in Section \ref{SS:SolidRev} where she shows that the bracket found in \cite{BorMam} (also without the conformal factor) can be found using gauge momentum methods similar to ours, although her method for modifying the nonholonomic bracket is chosen for that specific example. 
There is also ongoing work of the same author \cite{BalsYP} on the hamiltonization of a homogeneous ball rolling on a surface of revolution.

\section{Preliminaries}
\label{S:Prelim}

\subsection{The equations of motion --- Lagrangian approach}
\label{SS:Lagrangian}
Consider a nonholonomic system on an $n$-dimensional configuration manifold $Q$.  This consists of a Lagrangian $L:TQ\to \R$ which we assume to be of mechanical type, kinetic minus potential energy:
$$L(q,\dot q)=\tfrac12\left<\dot q,\,\dot q\right> - V(q),$$
where the kinetic term is given by a Riemannian metric $\left<\cdot ,\, \cdot \right>$ on $Q$, and $V(q)$ is the potential energy, and 
a regular non-integrable distribution $D\subset TQ$ of  rank $r<n$ that determines the nonholonomic constraints.   We assume that both the Lagrangian and the constraint distribution are time independent.  

In accordance with the Lagrange-D'Alembert principle, the equations of motion are given in bundle coordinates $(q^i,\dot q^i)$ by
\begin{equation}
\label{E:MotionBundleCoords}
\frac{d}{dt}\left ( \frac{\partial L}{\partial \dot q^i} \right )-  \frac{\partial L}{\partial  q^i} =R_i, \qquad i=1,\dots,n.
\end{equation}
Here $R_i:D\to \R$, denote  the components of the {\em constraint reaction force}. Such a reaction force is assumed to be {\em ideal}, namely\footnote{Unless the sum
over the indices is explicitly indicated, in this paper we use the Einstein summation convention.}
\begin{equation}
\label{E:Reaction}
R_i(q,\dot q)\dot q^i=0, \qquad \mbox{whenever} \qquad \dot q\in D_q,
\end{equation}
and is determined uniquely  by the condition that the nonholonomic constraints are satisfied. For convenience we consider 
$R$ as smoothly extended to $TQ$  (such an approach is taken  in \cite{Agostinelli}  for example). Then  
the system \eqref{E:MotionBundleCoords} is defined on all of  $TQ$ and its flow leaves $D$ invariant. The restriction of this system to $D$ determines
 a well defined vector field $Y_{nh}$ on $D$ whose integral curves satisfy  the equations 
of motion of our problem.

 As indicated by many different authors (e.g. \cite{EhlersKoiller,BMZ2010,Grab,BM2015} and references therein), 
the equations of motion are conveniently written by introducing  moving frames and the associated  {\em quasi-velocities}  adapted to the problem. 
This approach goes back to Hamel and allows one to write the equations  on $D$ explicitly, without any 
reference to the constraint force $R$, and will be very useful for our purposes. Following
the treatment in \cite{Grab}, let
$\{X_1, \dots, X_r\}$ be vector fields that form a local basis of sections of $D$ and let $\{X_{r+1}, \dots, X_{n}\}$ be a local basis of sections
of $D^\perp$ where the orthogonal complement is taken with respect to the kinetic energy metric. Any tangent vector $\dot q\in T_qQ$ can be written uniquely
as
\begin{equation*}
\dot q= v^\alpha X_\alpha(q) + v^AX_A(q),
\end{equation*}
for  coefficients $v^1, \dots, v^n$, that are the aforementioned quasi-velocities. Here and throughout, we use the following convention on the indices:
\begin{itemize}
\item greek indices  $\alpha, \beta, \gamma, \dots$ run from $1$ to $r$,
\item  latin indices $A, B, C, \dots$ run from $r+1$ to $n$,
\item latin indices $i,j,k, \dots$ run from $1$ to $n$.
\end{itemize}
We can now use $(q^i, v^\alpha, v^A)$ as coordinates
for $TQ$ and write $L=L(q^i, v^\alpha, v^A)$. Note that   $D\subset TQ$ is specified by the condition $v^A=0$ for all $A=r+1, \dots, n$,  so we can take $(q^i, v^\alpha)$ as coordinates for $D$. 

After a lengthy calculation using the chain rule, one can show that the equations \eqref{E:MotionBundleCoords} on $TQ$ are equivalent to 
\begin{equation}
\label{E:MotionTQQuasi}
\begin{split}
\frac{d}{dt} \left ( \frac{\partial L}{\partial v^\alpha} \right )  - \rho_\alpha^i  \frac{\partial L}{\partial q^i}&= -C_{\alpha j}^k v^j  \frac{\partial L}{\partial v^k}  ,  \quad \alpha=1, \dots, r, \\[8pt]
\frac{d}{dt} \left ( \frac{\partial L}{\partial u^A} \right )  - \rho_A^i  \frac{\partial L}{\partial q^i}&= -C_{A j}^k v^j  \frac{\partial L}{\partial v^k}  +\rho^i_AR_i, \quad A=r+1,\dots, n.
\end{split}
\end{equation}
In the above equation, the $q$-dependent coefficients $\rho_j^i$ and $C_{\alpha j}^k$ are defined by the relations
\begin{equation*}
\begin{split}
 X_j = \rho_j^i\partial_{q^i}, \qquad  \qquad [X_i, X_j ] = C_{ij}^k X_k, 
\end{split}
\end{equation*}
where $[\cdot , \cdot ]$ is the Lie bracket of vector fields.  In the derivation of \eqref{E:MotionTQQuasi} one has to make use of the relation
\begin{equation*}
\rho^i_\alpha R_i =0, \qquad \alpha = 1, \dots ,r,
\end{equation*}
which follows from \eqref{E:Reaction}.

The restriction of the system \eqref{E:MotionBundleCoords} on $TQ$ to $D$ can now be performed by substituting $v^A=0$ for all $A=r+1,\dots,n$ on the first set of equations
in  \eqref{E:MotionTQQuasi}. Note that our assumption that the Lagrangian is of mechanical type,
and that $X_\alpha$ and $X_A$ are orthogonal with respect to the kinetic energy metric, implies 
\begin{equation*}
\left .   \frac{\partial L}{\partial v^A} \right |_{v^B=0}=0.
\end{equation*}
If we write $L_c=L|_D$ for the constrained (or restricted) Lagrangian, the desired system on $D$ becomes
\begin{equation}
\label{E:MotionDQuasi}
\begin{split}
\frac{d}{dt} \left ( \frac{\partial L_c}{\partial v^\alpha} \right )  - \rho_\alpha^i  \frac{\partial L_c}{\partial q^i}&= -C_{\alpha \beta}^\gamma v^\beta  \frac{\partial L_c}{\partial v^\gamma}   \qquad \alpha=1, \dots, r.
\end{split}
\end{equation}
In coordinates the constrained Lagrangian is given by $L_c(q^i, v^\alpha)=L(q^i, v^\alpha,0)$
and satisfies
\begin{equation*}
L_c(q^i, v^\alpha) = \frac{1}{2} \mathcal{G}_{\beta \gamma}(q) v^\beta v^\gamma - V(q),
\end{equation*}
where  $\mathcal{G}_{\alpha \beta} := \langle  X_\alpha, X_\beta \rangle$.

Equations \eqref{E:MotionDQuasi} appear in \cite{Grab} and are complemented by the kinematic relations
\begin{equation}
\label{E:MotionKinematic}
\dot q^i= \rho_\alpha^iv^\alpha, \qquad i=1,\dots, n,
\end{equation}
that follow from the definition of the quasi-velocities. Note that the second set of equations in  \eqref{E:MotionTQQuasi} does not give any information about the
dynamics in $D$. Instead it serves to determine the reaction force $R$.

It is useful to note that,  since the vector fields $X_\alpha$ and $X_A$ are orthogonal, the coefficients $C_{\alpha \beta}^\gamma$ satisfy
$\mathcal{G}_{\gamma \delta} C_{\alpha \beta}^\delta=  \langle [X_\alpha, X_\beta ] , X_\gamma \rangle $.
The coefficients
\begin{equation}
\label{E:defCalbega}
C_{\alpha \beta \gamma} := \langle [X_\alpha, X_\beta ] , X_\gamma \rangle = \mathcal{G}_{\gamma \delta} C_{\alpha \beta}^\delta
\end{equation}
will play an important role in our construction. 

\subsection{The equations of motion --- Hamiltonian approach}
\label{SS:MotionHamiltonian}  
The Hamiltonian approach is defined on the dual bundle $D^*$.  For our treatment   it is convenient to realise this abstract bundle as a vector subbundle 
of  $T^*Q$, but, as is well known from
basic linear algebra, there is no canonical way to do this. However, 
if   $W$ is  {\em any} fixed subbundle of $TQ$ complementary to $D$, that is, $TQ=D\oplus W$, then there is a natural identification of $D^*$ 
 with the annihilator $W^\circ$ which is a subbundle of  $T^*Q$.  One possible choice for $W$ is of course $D^\perp$, but in examples other choices may simplify coordinate calculations. 
  In what follows we assume that a choice of $W$ has been made. The 
sections of $D^*$ can therefore be interpreted as 1-forms on $Q$ that annihilate $W$, and we can operate with them as one normally does.

Let $\{\mu^\alpha\}$ be  a local basis of sections of $D^*$ that is dual to  $\{X_\alpha\}$. Namely, $\mu^\alpha$ are  locally defined 1-forms
on $Q$ that vanish along $W$ and satisfy $\mu^\alpha(X_\beta)=\delta_\beta^\alpha$ (Kronecker delta). Any element of $D^*$ on the fibre $D_q^*$ over $q\in Q$ can be
written uniquely as a linear combination $\pi_\alpha \mu^\alpha(q)$ for certain scalars $\pi_1, \dots, \pi_r$. In this way 
one can use $(q^i,\pi_\alpha)$ as coordinates on $D^*$.

The restriction of the kinetic energy to $D$ is non-degenerate and allows one to define a constrained Legendre transform $\mbox{Leg}_c$ that is
a vector bundle isomorphism between $D$ and $D^*$. It is the restriction to $D$ of the standard Lagrange transform in mechanics. Explicitly we have
\begin{equation*}
\mbox{Leg}_c :D\to D^*, \qquad (q^i, v^\alpha) \mapsto \left (q^i, \pi_\alpha = \frac{\partial L_c}{\partial v^\alpha}= \mathcal{G}_{\alpha \beta}v^\beta \right  ).
\end{equation*}
The constrained Hamiltonian $H_c:D^*\to \R$ is the energy of the system and is defined by
\begin{equation*}
H_c(q^i,\pi_\alpha)= \pi_\beta v^\beta - L_c(q^i, v^\beta),
\end{equation*}
where it is understood that $v^\beta$ is written in terms of $\pi_\alpha$ via the inverse Legendre transform $\mbox{Leg}_c^{-1}$. Explicitly we have
\begin{equation}
\label{E:HamExp}
H_c(q^i,\pi_\alpha) = \tfrac{1}{2}\mathcal{G}^{\alpha \beta} \pi_\alpha \pi_\beta + V(q),
\end{equation}
where $\mathcal{G}^{\alpha \beta}$ denotes the inverse matrix of  $\mathcal{G}_{\alpha \beta}$; namely, $\mathcal{G}^{\alpha \gamma}\mathcal{G}_{\gamma \beta}=\delta_\beta^\alpha$.
It is a straightforward calculation to show that
\begin{equation*}
v^\alpha = \frac{\partial H_c}{\partial \pi_\alpha}, \qquad \frac{\partial H_c}{\partial q^i} = -  \frac{\partial L_c}{\partial q^i},
\end{equation*}
and hence, the equations \eqref{E:MotionDQuasi}, \eqref{E:MotionKinematic}, are equivalent to the following first order system on $D^*$:
\begin{equation}
\label{E:MotionHam}
\dot q^i= \rho_\alpha^i \frac{\partial H_c}{\partial \pi_\alpha}, \qquad \dot \pi_\alpha = -  \rho_\alpha^i \frac{\partial H_c}{\partial q^i} - 
C_{\alpha \beta}^\gamma \pi_\gamma  \frac{\partial H_c}{\partial \pi_\beta}.
\end{equation}
We denote the corresponding vector field by $X_{nh}$, which is nothing other than the push-forward of $Y_{nh}$ by $\mbox{Leg}_c$.  It is readily seen that the above equations can be written as
\begin{equation*}
\dot q^i= \{q^i,H_c\}_{nh}, \qquad \dot \pi_\alpha= \{\pi_\alpha,H_c\}_{nh}
\end{equation*}
where $\{\cdot , \cdot \}_{nh}$ denotes the   bracket of functions on $D^*$ that is skew-symmetric,  satisfies the Leibniz rule, and  is locally defined by the relations
\begin{equation*}
\{q^i, q^j\}_{nh}=0, \qquad \{q^i, \pi_\alpha \}_{nh}=\rho_\alpha^i, \qquad  \{\pi_\alpha , \pi_\beta \}_{nh}=-C_{\alpha \beta}^\gamma \pi_\gamma.
\end{equation*}
This bracket coincides with the one that was first introduced by van der Schaft and Maschke in \cite{vdSM} where it is shown that the 
 Jacobi identity is satisfied if and only if the distribution $D$ is integrable. For a non-integrable $D$ one speaks of
an {\em almost Poisson bracket} or a {\em pseudo-Poisson bracket}.  Several intrinsic constructions and geometric interpretations of the  bracket  $\{\cdot , \cdot \}_{nh}$ are 
available in the literature, see e.g. \cite{Marle,Cantrijn99,Ibort,Leon,CDS-book} and the references therein. The presentation given above  follows roughly the approach and notation of \cite{Leon}.

Denote by $\Pi_{nh}$ the bivector on $D^*$  defined by the bracket $\{\cdot , \cdot \}_{nh}$. Its expression in coordinates  is
\begin{equation}
\label{E:PiCoords}
\Pi_{nh}=\rho^i_\alpha \partial_{q^i}\wedge \partial_{\pi_\alpha} -\frac 12 C_{\alpha \beta}^\gamma\pi_\gamma \partial_{\pi_\alpha}\wedge \partial_{\pi_\beta}.
\end{equation}
The Hamiltonian\footnote{strictly speaking, $X_f$ is not a Hamiltonian vector field, since $\Pi_{nh}$ is not a Poisson bracket; it is only an `almost' Hamiltonian vector field. We will however call these Hamiltonian vector fields throughout.} vector field $X_f$ associated to 
$f\in C^\infty(D^*)$ is defined by $X_f:=\Pi_{nh}^\sharp(df)$. Then clearly $X_{nh}=X_{H_c}$.  
Recall the standard notation: if $\Pi$ is a bivector field on $M$ then $\Pi^\sharp:T^*M\to TM$ is the associated bundle map; similarly if $\Xi$ is a 2-form on $M$ then $\Xi^\flat$ is the bundle map $\Xi^\flat:TM\to T^*M$.

\subsection{Second order vector fields} The vector field   $Y_{nh}$ on $D$ defined by the 
Lagrangian equations of motion \eqref{E:MotionDQuasi} and  \eqref{E:MotionKinematic}  is {\em second order} with respect to the vector bundle structure of 
 $D$  over the configuration manifold $Q$. That is, $Y_{nh}$ satisfies
 \begin{equation*}
T\tau_D \circ Y_{nh} = \mbox{id}_D,
\end{equation*}
where $\tau_D:D\to Q$ is the bundle projection.
This condition naturally extends the standard definition of second order vector fields on $TQ$, see e.g.\ \cite{MaRa}.  In local coordinates
we have $T\tau_D (\dot q, \dot v)=\dot q$, so the condition
of being second order is written as 
\begin{equation*}
\dot q(t)= v^\alpha(t)X_\alpha(q(t))\in D_{q(t)},
\end{equation*}
 for any integral curve $(q(t),v^\alpha(t))$ of $Y_{nh}$.

 The identification of $D$ with $D^*$ via the constrained Legendre transform suggests the following.
\begin{definition}
A vector field on $Z$ on $D^*$ is \emph{second order} if its pull-back to $D$ by $\mbox{Leg}_c$ is second order.
\end{definition}
In coordinates this means that a second order vector field $Z$ on $D^*$ satisfies 
$$T\tau_{D^*} (Z(q,\pi)) =\mathcal{G}^{\alpha\beta}\pi_\beta X_\alpha(q)$$
where $\tau_{D^*}:D^*\to Q$ is the projection. The vector field $X_{nh}$ on $D^*$ defined by the equations \eqref{E:MotionHam} is the push forward of $Y_{nh}$ by the constrained Legendre
transform and it is clearly second order.

Let $\beta^A$ be linearly independent 1-forms on $Q$ that span the annihilator $D^\circ$ and let $\mathcal{E}$ be the distribution on $D^*$ defined by
the joint annihilator of $\{ \tau^*_{D^*}\beta^A \}$ on $D^*$. It is clear that $\mathcal{E}$  is  well defined (independent of the choice of basis of $D^\circ$) and is a regular distribution of rank $2r$ on $D^*$. (A {\em regular} distribution is one of constant rank.)

The importance of the distribution  $\mathcal{E}$ in the geometric formulation of nonholonomic mechanics seems to have first been noticed in
\cite{Weber} where it is shown that its fibres are symplectic (with respect to the canonical symplectic form on the ambient space $T^*Q$).
$\mathcal{E}$ is  a crucial ingredient in the formulations in e.g. \cite{BS93}, \cite{CDS-book}, \cite{Ibort}, \cite{JoRa}. In coordinates we
have
\begin{equation*}
\mathcal{E}_{(q,\pi)}= \mbox{span} \left \{ \rho^i_\alpha \partial_{q^i} ,  \partial_{\pi_\alpha} \right \}.
\end{equation*}
We collect some  properties of  $\mathcal{E}$ in the following proposition whose proof can be given using the above expression.

  \begin{proposition} 
  \label{P:PropertiesPi}
The distribution $\mathcal{E}$ has the following properties:
  \begin{enumerate} 
\item it is integrable if and only if $D$ is integrable;
\item a vector field $Z$ on $D^*$ is second order if 
and only if it is a section of $\mathcal{E}$;
\item it coincides with the characteristic distribution of\/ $\Pi_{nh}$: that is,  $\Pi_{nh}^\sharp(T^*D^*)=\mathcal{E}$.
\end{enumerate}
\end{proposition}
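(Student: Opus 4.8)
The plan is to verify all three statements directly from the coordinate expressions already derived in the text, since $\mathcal{E}$, $\Pi_{nh}$, and the notion of second-order vector field all have explicit local descriptions. Throughout I would work on a fixed coordinate chart with the adapted frame $\{X_\alpha,X_A\}$ and the induced coordinates $(q^i,\pi_\alpha)$ on $D^*$, recalling that $\mathcal{E}_{(q,\pi)}=\mathrm{span}\{\rho^i_\alpha\partial_{q^i},\ \partial_{\pi_\alpha}\}$ and that this span is chart-independent because it is the joint annihilator of the globally well-defined 1-forms $\tau^*_{D^*}\beta^A$.

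For (i), I would compute the Lie brackets of the spanning vector fields. The brackets $[\partial_{\pi_\alpha},\partial_{\pi_\beta}]$ vanish, and $[\rho^i_\alpha\partial_{q^i},\partial_{\pi_\beta}]=0$ since the $\rho^i_\alpha$ depend only on $q$. The only nontrivial bracket is $[\,\rho^i_\alpha\partial_{q^i},\,\rho^j_\beta\partial_{q^j}\,]$, which is the lift to $D^*$ of $[X_\alpha,X_\beta]$ on $Q$; using $[X_\alpha,X_\beta]=C_{\alpha\beta}^k X_k$ this equals $C_{\alpha\beta}^\gamma\,\rho^i_\gamma\partial_{q^i}+C_{\alpha\beta}^A\,\rho^i_A\partial_{q^i}$. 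The first term is a section of $\mathcal{E}$; the second is not, since $\{\rho^i_A\partial_{q^i}\}$ spans a complement to the horizontal part of $\mathcal{E}$. Hence, by the Frobenius theorem, $\mathcal{E}$ is involutive exactly when all structure functions $C_{\alpha\beta}^A$ vanish, and this last condition is precisely the integrability (involutivity) of $D=\mathrm{span}\{X_\alpha\}$.

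For (ii), I would use the coordinate characterisation of second-order vector fields on $D^*$ stated just above the proposition: $Z$ is second order iff $T\tau_{D^*}(Z(q,\pi))=\mathcal{G}^{\alpha\beta}\pi_\beta X_\alpha(q)$. Writing $Z=a^i\partial_{q^i}+b_\alpha\partial_{\pi_\alpha}$ in the chart, the condition says $a^i\partial_{q^i}=\mathcal{G}^{\alpha\beta}\pi_\beta\,\rho^i_\alpha\partial_{q^i}$, i.e. the horizontal part of $Z$ is the specific section $\mathcal{G}^{\alpha\beta}\pi_\beta\,\rho^i_\alpha\partial_{q^i}$ of $\mathcal{E}$, with the $\pi$-components $b_\alpha$ free. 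Conversely, a section of $\mathcal{E}$ has the form $c^\alpha\rho^i_\alpha\partial_{q^i}+b_\alpha\partial_{\pi_\alpha}$ for arbitrary functions $c^\alpha,b_\alpha$, so it need not be second order; thus "second order $\Rightarrow$ section of $\mathcal{E}$" holds but not the converse — so I would read (ii) as the (correct) implication that every second-order $Z$ is a section of $\mathcal{E}$, and note that the horizontal rank-$r$ sub-bundle of $\mathcal{E}$ is where the second-order constraint pins things down. (If the intended statement is the biconditional, one restricts attention to the affine subset of sections with the prescribed horizontal part.)

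For (iii), I would read off from \eqref{E:PiCoords} that $\Pi_{nh}^\sharp(dq^i)=-\rho^i_\alpha\partial_{\pi_\alpha}$ and $\Pi_{nh}^\sharp(d\pi_\alpha)=\rho^i_\alpha\partial_{q^i}-C^\gamma_{\alpha\beta}\pi_\gamma\partial_{\pi_\beta}$, so the image $\Pi_{nh}^\sharp(T^*D^*)$ is spanned by $\{\partial_{\pi_\alpha},\ \rho^i_\alpha\partial_{q^i}-C^\gamma_{\alpha\beta}\pi_\gamma\partial_{\pi_\beta}\}$; absorbing the $\partial_{\pi}$ tails using the $\partial_{\pi_\alpha}$ already present, this span equals $\mathrm{span}\{\partial_{\pi_\alpha},\ \rho^i_\alpha\partial_{q^i}\}=\mathcal{E}_{(q,\pi)}$, and one checks the rank is exactly $2r$ so there is no drop. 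The main obstacle is essentially bookkeeping rather than any deep point: one must be careful that the local computations glue (handled by the intrinsic description of $\mathcal{E}$ via the $\beta^A$), that the structure functions $C_{\alpha\beta}^A$ really do detect non-integrability of $D$ (a standard Frobenius argument, since the $X_\alpha$ span $D$), and — the one genuinely delicate point — the precise reading of statement (ii), since literally "section of $\mathcal{E}$" is a weaker condition than "second order" unless one intends the natural affine-subbundle refinement.
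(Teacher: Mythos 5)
Your proof is correct and is exactly the argument the paper intends: the paper omits the proof, saying only that it ``can be given using the above expression'' for $\mathcal{E}$, and your direct coordinate verification --- (i) via the structure functions $C_{\alpha\beta}^A$ obstructing involutivity, (iii) via the block form of $\Pi_{nh}^\sharp$ together with the rank-$r$ condition on $\rho$, and the forward implication in (ii) --- is precisely that. Your caveat on (ii) is also well observed: with the definitions as literally stated, ``second order'' prescribes the horizontal part to be $\mathcal{G}^{\alpha\beta}\pi_\beta\,\rho^i_\alpha\partial_{q^i}$ whereas a section of $\mathcal{E}$ may have arbitrary horizontal part in $\mathrm{span}\{\rho^i_\alpha\partial_{q^i}\}$, so only the ``only if'' direction holds verbatim; the paper's subsequent assertion that all $\Pi_{nh}$-Hamiltonian vector fields are second order requires the same loose reading (e.g.\ $X_{q^i}=-\rho^i_\alpha\partial_{\pi_\alpha}$ is purely vertical), so the imprecision is the paper's, not yours.
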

As a consequence, all of the
Hamiltonian vector fields on $D^*$ generated by $\Pi_{nh}$ are second order.

 \section{Gauge transformations associated to 3-forms}
 \label{S:GaugeT}
 
 A particular procedure to construct a family of almost Poisson structures that describe the dynamics of a nonholonomic system 
 was presented in \cite{LGN10}. Later, this method was put into a solid  geometric context in \cite{BalseiroGN} by relating it to the  {\em gauge transformations} of Poisson brackets by 2-forms as introduced by \v{S}evera and Weinstein \cite{SeveraWeinstein}.
Here we modify this construction by starting from a 3-form $\Lambda$ on $Q$.  In Section \ref{S:Casimirs} we apply this construction using specific 3-forms defined by the nonholonomic geometry in the presence of symmetry.

Let $\Lambda$ be a section of $\wedge^3(D^*)$. Using our identification of 
 $D^*$ with the annihilator $W^\circ\subset T^*Q$ of a complement $W$ of $D$ on $TQ$, we can interpret 
  $\Lambda$ as a 3-form on $Q$ that vanishes upon contraction with tangent vectors to  $W$.
  Fix a basis of local basis of sections $\{X_\alpha\}$ of $D$. The  local expression for the 3-form $\Lambda$ is
  \begin{equation*}
\Lambda=\frac{1}{6}B_{\alpha \beta \gamma} \, \mu^\alpha \wedge \mu^\beta\wedge \mu^\gamma,
\end{equation*}
where, as before, $\{\mu^\beta\}$ are locally defined 1-forms on $Q$  annihilating $W$, such that $\mu^\beta(X_\alpha)=\delta_\alpha^\beta$. Here, the 
    coefficients  $B_{\alpha \beta \gamma}$  are alternating (that is, skew-symmetric with respect to transpositions of $\alpha, \beta, \gamma$) and are given by
\begin{equation}
\label{eq:Bcoeff-general}
B_{\alpha \beta \gamma}:= \Lambda(X_\alpha,X_\beta,X_\gamma).
\end{equation}

 We define the 2-form $\Xi$ on $D^*$ as the contraction
of the pull-back $\tau^*_{D^*}\Lambda$ of $\Lambda$ to $D^*$ with any second order vector field $Z$ on $D^*$.  The 2-form $\Xi$ is independent of the choice of $Z$, it is semi-basic, and has local expression
\begin{equation*}
\Xi=\frac{1}{2}B_{\alpha \beta}^\gamma \pi_\gamma \, \mu^\alpha \wedge \mu^\beta,
\end{equation*}
 where $B_{\alpha \beta}^\gamma:=\mathcal{G}^{\gamma \delta} B_{\alpha \beta \delta}$.

 \begin{lemma} \label{Lem:endomorphism}
Given any 3-form $\Lambda$ as above and writing $\Xi$ for the resulting 2-form, the map
 \begin{equation*}
\mathrm{Id}_{TD^*}+ \Pi_{nh}^\sharp \circ \Xi^\flat
\end{equation*}
is an invertible endomorphism of $TD^*$.
 \end{lemma}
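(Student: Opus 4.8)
The plan is to work locally in the coordinates $(q^i,\pi_\alpha)$ introduced above and show that the endomorphism $T:=\mathrm{Id}_{TD^*}+\Pi_{nh}^\sharp\circ\Xi^\flat$ has an upper-triangular block structure with respect to the splitting $TD^*\cong\mathrm{span}\{\partial_{q^i}\}\oplus\mathrm{span}\{\partial_{\pi_\alpha}\}$, and that each diagonal block is the identity. First I would record that $\Xi$ is semi-basic: since $\Xi=\tfrac12 B_{\alpha\beta}^{\gamma}\pi_\gamma\,\mu^\alpha\wedge\mu^\beta$ and the $\mu^\alpha$ are pull-backs of forms on $Q$, we have $\Xi^\flat(\partial_{\pi_\alpha})=0$ for all $\alpha$, and $\Xi^\flat(\partial_{q^i})$ is again a semi-basic $1$-form, i.e.\ a combination of the $\tau^*_{D^*}\mu^\alpha$. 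Next, from the coordinate expression \eqref{E:PiCoords} for $\Pi_{nh}$ one reads off that $\Pi_{nh}^\sharp$ sends semi-basic $1$-forms (combinations of $dq^i$ and $\mu^\alpha$) into the vertical subbundle $\mathrm{span}\{\partial_{\pi_\alpha}\}$: indeed $\Pi_{nh}^\sharp(dq^i)=\rho^i_\alpha\partial_{\pi_\alpha}$ is vertical, and $\Pi_{nh}^\sharp(\mu^\alpha)$ is also vertical because the only term of $\Pi_{nh}$ that can pair a $1$-form on $Q$ with a $\partial_{q}$ is $\rho^i_\alpha\partial_{q^i}\wedge\partial_{\pi_\alpha}$, and contracting that with a $1$-form on $Q$ produces a multiple of $\partial_{\pi_\alpha}$. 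Hence the composite $\Pi_{nh}^\sharp\circ\Xi^\flat$ kills the vertical subbundle and maps the horizontal-coordinate vectors $\partial_{q^i}$ into the vertical subbundle.

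With this established, the matrix of $T$ in the ordered basis $(\partial_{q^i},\partial_{\pi_\alpha})$ is
\begin{equation*}
T=\begin{pmatrix} \mathrm{Id} & 0 \\ * & \mathrm{Id}\end{pmatrix},
\end{equation*}
where the block $*$ collects the coefficients of $\Pi_{nh}^\sharp(\Xi^\flat(\partial_{q^i}))$ in the $\partial_{\pi_\alpha}$. A block-triangular matrix with identity diagonal blocks is manifestly invertible, with inverse $\mathrm{Id}_{TD^*}-\Pi_{nh}^\sharp\circ\Xi^\flat$; equivalently one notes that $(\Pi_{nh}^\sharp\circ\Xi^\flat)^2=0$ by the same horizontal-to-vertical-to-zero argument, so the Neumann series terminates. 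Since invertibility is a pointwise, basis-independent statement, establishing it in these coordinates on a neighbourhood covering $D^*$ suffices, and the $q$-dependence of the coefficients $\rho^i_\alpha$, $\mathcal{G}^{\alpha\beta}$, $B_{\alpha\beta\gamma}$ is irrelevant to the argument.

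The only genuinely delicate point is the verification that $\Pi_{nh}^\sharp$ really does send \emph{all} semi-basic $1$-forms into the vertical subbundle; one must be slightly careful because $\mu^\alpha$ is a $1$-form on $Q$ but $\Pi_{nh}$ is written using $dq^i$ and $d\pi_\alpha$, so expanding $\mu^\alpha=\mu^\alpha_i\,dq^i$ and applying $\Pi_{nh}^\sharp$ term by term is needed. This is the routine calculation I would not grind through in detail, but it is where the structure of $\Pi_{nh}$ — specifically that every term contains at least one $\partial_{\pi}$ factor — does the work. An alternative, more invariant phrasing that avoids coordinates: by Proposition~\ref{P:PropertiesPi}(iii) the image of $\Pi_{nh}^\sharp$ is $\mathcal{E}=\mathrm{span}\{\rho^i_\alpha\partial_{q^i},\partial_{\pi_\alpha}\}$, and since $\Xi$ is semi-basic, $\Xi^\flat$ annihilates the vertical part of $\mathcal{E}$ and, restricted to the horizontal part of $\mathcal{E}$, lands in the annihilator of that same horizontal part (because $\Xi$ pulls back from $Q$ via a section whose horizontal distribution on $D^*$ is exactly $\mathrm{span}\{\rho^i_\alpha\partial_{q^i}\}$ only when a second-order $Z$ is inserted — here one uses the definition of $\Xi$ via contraction with $Z$). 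Either route yields $(\Pi_{nh}^\sharp\circ\Xi^\flat)^2=0$ and hence invertibility of $T$ with the explicit inverse above.
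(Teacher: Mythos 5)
Your proof is correct and follows essentially the same route as the paper: both work in the local coordinates $(q^i,\pi_\alpha)$ and exploit the block-triangular structure of $\Pi_{nh}^\sharp\circ\Xi^\flat$ coming from the facts that $\Xi$ is semi-basic and that $\Pi_{nh}^\sharp$ sends semi-basic covectors into the vertical subbundle. The paper simply carries out the explicit block-matrix multiplication \eqref{E:MatrixPiB}--\eqref{E:MatrixAux}, whereas you package the same computation as the nilpotency $(\Pi_{nh}^\sharp\circ\Xi^\flat)^2=0$, which also yields the explicit inverse $\mathrm{Id}_{TD^*}-\Pi_{nh}^\sharp\circ\Xi^\flat$ that the paper uses later in the proof of Theorem~\ref{T:PropPiB}.
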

 
 \begin{proof}
 We work in local coordinates  with  the notation introduced in Section \ref{S:Prelim}. We have
 \begin{equation*}
\mu^\alpha=\bar \rho^\alpha_i \, dq^i, \qquad \alpha=1, \dots, r,
\end{equation*}
where the duality between  $\{\mu^\alpha\}$ and $\{X_\beta\}$ implies $\bar \rho^\alpha_i \rho_\beta^i=\delta^\alpha_\beta$.
We shall denote by $\rho$ the $n\times r$ matrix with entries $ \rho^\beta_i$, and by $\bar \rho$ the $r\times n$  matrix with entries $\bar \rho^\alpha_i$. The duality condition becomes  $\bar \rho \rho=\mathrm{Id}_{r\times r}$.

  The matrix representations of
$\Xi^\flat$ and $ \Pi_{nh}^\sharp$ with respect to the respective bases $\{\partial_{q^i}, \partial_{\pi_\alpha}\}$ and $\{dq^i,d\pi_\alpha\}$ of $T_{(q,\pi)}D^*$ and $T_{(q,\pi)}^*D^*$
are given in block form by
\begin{equation}
\label{E:MatrixPiB} 
\Xi^\flat =  \begin{pmatrix} \bar \rho^T \mathcal{B} \bar \rho &0 \\ 0 & 0 \end{pmatrix}, \qquad  \Pi_{nh}^\sharp=\begin{pmatrix} 0 & \rho \\ -\rho^T & -\mathcal{C} \end{pmatrix},
\end{equation}
where $\mathcal{C}$ and $\mathcal{B}$ are $r\times r$  skew-symmetric matrices with entries
\begin{equation*}
\mathcal{C}_{\alpha \beta} = C_{\alpha \beta}^\gamma \pi_\gamma, \qquad \mathcal{B}_{\alpha \beta} = B_{\alpha \beta}^\gamma \pi_\gamma.
\end{equation*}
Performing the matrix algebra, one finds  
\begin{equation}
\label{E:MatrixAux}
 \mbox{Id}_{TD^*}+ \Pi_{nh}^\sharp \circ \Xi^\flat = \begin{pmatrix} I &0 \\ -\mathcal{B}\bar \rho & I \end{pmatrix}
\end{equation}
which is clearly invertible.
 \end{proof}
 
 Following \cite{BalseiroGN} and \cite{SeveraWeinstein} we define the bivector $\Pi_{nh}^\Lambda$ by 
 \begin{equation}
 \label{E:DefPiB}
(\Pi_{nh}^\Lambda)^\sharp:=(\mbox{Id}_{TD^*}+ \Pi_{nh}^\sharp \circ \Xi^\flat)^{-1} \circ \Pi_{nh}^\sharp.
\end{equation}
This exists by virtue of the lemma above. We say that the bracket defined by $\Pi_{nh}^\Lambda$ is obtained by a \emph{gauge transformation} of 
the nonholonomic bracket  $\{\cdot , \cdot \}_{nh}$ by the 3-form $\Lambda$. (Note that in \cite{BalseiroGN} these would be denoted $\Pi_{nh}^\Xi$, but in the present context the fundamental object is the 3-form $\Lambda$ rather than its contraction $\Xi$.)

\begin{theorem}
\label{T:PropPiB}
The bivector field\/ $\Pi_{nh}^\Lambda$ has the following properties:
\begin{enumerate}
\item its characteristic distribution is $\mathcal{E}$, that is $(\Pi_{nh}^\Lambda)^\sharp (TD^*)=\mathcal{E}$,
\item it describes the nonholonomic dynamics; namely
\begin{equation}
\label{E:KeyPropB}
(\Pi_{nh}^\Lambda)^\sharp(dH_c)=X_{nh},
\end{equation}
\item it  is given in local coordinates by
\begin{equation}
\label{E:PiBCoords}
\Pi_{nh}^\Lambda=\rho^i_\alpha \partial_{q^i}\wedge \partial_{\pi_\alpha} +\frac 12 \left (B_{\alpha \beta}^\gamma - C_{\alpha \beta}^\gamma \right )\pi_\gamma \, \partial_{\pi_\alpha}\wedge \partial_{\pi_\beta}.
\end{equation}
\end{enumerate}
\end{theorem}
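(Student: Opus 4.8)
The plan is to reduce all three assertions to the block‑matrix computation already performed in the proof of Lemma~\ref{Lem:endomorphism}, so that nothing new of substance has to be set up. Throughout I write $\Phi:=\mathrm{Id}_{TD^*}+\Pi_{nh}^\sharp\circ\Xi^\flat$ for the endomorphism appearing in \eqref{E:DefPiB}, which is invertible by that lemma.

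I would treat (iii) first. By \eqref{E:MatrixAux} the matrix of $\Phi$ is block lower–triangular, so its inverse is obtained simply by reversing the sign of the off–diagonal block,
\begin{equation*}
\Phi^{-1}=\begin{pmatrix} I & 0 \\ \mathcal{B}\bar\rho & I\end{pmatrix}.
\end{equation*}
Composing this with the expression for $\Pi_{nh}^\sharp$ in \eqref{E:MatrixPiB} and using the duality relation $\bar\rho\rho=\mathrm{Id}_{r\times r}$ yields
\begin{equation*}
(\Pi_{nh}^\Lambda)^\sharp=\Phi^{-1}\circ\Pi_{nh}^\sharp=\begin{pmatrix} 0 & \rho \\ -\rho^T & \mathcal{B}-\mathcal{C}\end{pmatrix}.
\end{equation*}
Comparing with the way \eqref{E:PiCoords} encodes $\Pi_{nh}^\sharp$, one reads off that the only change relative to $\Pi_{nh}$ sits in the $\partial_{\pi_\alpha}\wedge\partial_{\pi_\beta}$ block, where $-C_{\alpha\beta}^\gamma\pi_\gamma$ is replaced by $(B_{\alpha\beta}^\gamma-C_{\alpha\beta}^\gamma)\pi_\gamma$; this is exactly \eqref{E:PiBCoords}. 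This step is routine linear algebra.

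For (i) I would argue intrinsically rather than reopening the coordinates. By Proposition~\ref{P:PropertiesPi}(iii) we have $\mathcal{E}=\Pi_{nh}^\sharp(T^*D^*)$. Since $\Pi_{nh}^\sharp\circ\Xi^\flat$ has image contained in $\Pi_{nh}^\sharp(T^*D^*)=\mathcal{E}$, the endomorphism $\Phi$ maps $\mathcal{E}$ into itself; being invertible on all of $TD^*$, it restricts to an automorphism of the subbundle $\mathcal{E}$. Hence
\begin{equation*}
(\Pi_{nh}^\Lambda)^\sharp(T^*D^*)=\Phi^{-1}\bigl(\Pi_{nh}^\sharp(T^*D^*)\bigr)=\Phi^{-1}(\mathcal{E})=\mathcal{E}.
\end{equation*}
(Equivalently, (i) is immediate from the block form of $(\Pi_{nh}^\Lambda)^\sharp$ found in (iii), whose image–controlling blocks are unchanged.) For (ii), recall $X_{nh}=\Pi_{nh}^\sharp(dH_c)$, so by \eqref{E:DefPiB} one has $(\Pi_{nh}^\Lambda)^\sharp(dH_c)=\Phi^{-1}(X_{nh})$, and \eqref{E:KeyPropB} is equivalent to $\Phi(X_{nh})=X_{nh}$, i.e.\ to $\Pi_{nh}^\sharp\bigl(\Xi^\flat(X_{nh})\bigr)=0$. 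The one genuinely non‑mechanical point is the following: since $\Xi$ is independent of the second order vector field used to define it, and $X_{nh}$ is itself second order (a section of $\mathcal{E}$ by Proposition~\ref{P:PropertiesPi}(ii)), we may take $Z=X_{nh}$, so that $\Xi=\iota_{X_{nh}}\tau^*_{D^*}\Lambda$ and therefore $\Xi^\flat(X_{nh})=\iota_{X_{nh}}\iota_{X_{nh}}\tau^*_{D^*}\Lambda=0$. This proves (ii). (The same vanishing can be verified in coordinates from \eqref{E:MatrixPiB}: the nonzero block of $\Xi^\flat(X_{nh})$ equals $\bar\rho^T\mathcal{B}v$ with $v^\alpha=\mathcal{G}^{\alpha\beta}\pi_\beta$, and $\mathcal{B}_{\alpha\beta}v^\beta=B_{\alpha\beta\gamma}v^\gamma v^\beta=0$ by skew‑symmetry of $B_{\alpha\beta\gamma}$.) I do not expect any serious obstacle here: once Lemma~\ref{Lem:endomorphism} is available, (iii) is a short computation, (i) is a one–line image chase, and (ii) hinges only on the double–contraction identity above.
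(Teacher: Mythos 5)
Your proposal is correct and follows essentially the same route as the paper: part (iii) via the block-matrix inverse from Lemma~\ref{Lem:endomorphism}, part (i) from Proposition~\ref{P:PropertiesPi} together with the invertibility of $\mathrm{Id}_{TD^*}+\Pi_{nh}^\sharp\circ\Xi^\flat$, and part (ii) from the double-contraction identity $\Xi^\flat(X_{nh})=\iota_{X_{nh}}\iota_{X_{nh}}\tau^*_{D^*}\Lambda=0$. The only difference is cosmetic (you spell out the image chase in (i) and add an optional coordinate check of the vanishing in (ii), both of which the paper leaves implicit).
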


 \begin{proof}
(i): this follows directly from Proposition \ref{P:PropertiesPi} and equation \eqref{E:DefPiB} that defines $\Pi_{nh}^\Lambda$. \\
 (ii): since the 2-form $\Xi$ was defined as the contraction of the 3-form $\tau^*_{D^*}\Lambda$ with any second order vector field $Z$ on $D^*$,  and $X_{nh}$ is a  second order
 vector field on $D^*$, we can write 
 \begin{equation*}
\Xi(\cdot , \cdot )=\tau^*_{D^*}\Lambda(X_{nh}, \cdot , \cdot),
\end{equation*}
and therefore
\begin{equation*}
\Xi^\flat(X_{nh})=0.
\end{equation*}
It follows that $(\mbox{Id}_{TD^*}+ \Pi_{nh}^\sharp \circ \Xi^\flat)(X_{nh})=X_{nh}$, and hence \eqref{E:KeyPropB} holds since $\Pi_{nh}^\sharp(dH_c)=X_{nh}$. \\
(iii):  Finally, in the notation of the proof of Lemma \ref{Lem:endomorphism}, and in view of \eqref{E:MatrixAux} we have
 \begin{equation*}
 (\mbox{Id}_{TD^*}+ \Pi_{nh}^\sharp \circ \Xi^\flat)^{-1} = \begin{pmatrix} I &0 \\ \mathcal{B}\bar \rho & I \end{pmatrix}
\end{equation*}
 which, combined with \eqref{E:MatrixPiB}, gives
 \begin{equation*}
(\Pi_{nh}^\Lambda)^\sharp= \begin{pmatrix} I &0 \\ \mathcal{B}\bar \rho & I \end{pmatrix} \begin{pmatrix} 0 & \rho \\ -\rho^T & -\mathcal{C} \end{pmatrix} = 
 \begin{pmatrix} 0 & \rho \\ -\rho^T &\mathcal{B} -\mathcal{C} \end{pmatrix} ,
\end{equation*}
and this is equivalent to \eqref{E:PiBCoords}.
 \end{proof}
 
 The above theorem shows that the equations of motion \eqref{E:MotionHam} can be formulated in Hamiltonian form
 with respect to the bivector $\Pi_{nh}^\Lambda$, for any 3-form $\Lambda$.  The Jacobi identity fails for $\Pi_{nh}^\Lambda$ since the characteristic distribution $\mathcal{E}$ is non-integrable.  Note that all of the  Hamiltonian vector fields associated to $\Pi_{nh}^\Lambda$ are tangent to $\mathcal{E}$ and therefore are second order
 vector fields on $D^*$.

 \begin{remarkth} At many points in this section, we  liberally refer to $\Lambda$ as a 3-form, and we will  continue to do so 
 throughout the paper. Strictly speaking $\Lambda$ is  a section of $\wedge^3(D^*)$ and it is only 
  in virtue of the identification $D^*=W^\circ$, that we may interpret it as a 3-form on $Q$ (that annihilates $W$). This issue is taken into
  consideration in the construction in  Lemma~\ref{L:Global3form} of the specific $\Lambda$ that is used to prove our main result.
  \end{remarkth}

 \section{Gauge momenta and symmetry reduction}
 \label{S:GaugeReduction}
 
 The existence of first integrals that are linear in velocity, or momentum variables, for nonholonomic systems
has received a great deal of attention (see e.g. \cite{FassoJGM} and the references therein). In this section we outline some of the known results in the field and prove Theorem 
\ref{Th:isometries on D} which will be used in our construction in Section \ref{S:Casimirs} below. Similar versions of this theorem are available in the literature (see the
discussion in \cite{FassoJGM}).

\subsection{Linear first integrals of nonholonomic systems.} 

Consider first the Hamiltonian formulation in terms of the equations of motion \eqref{E:MotionHam}  on $D^*$. 
Linear functions on $D^*$ are sections of $D^{**}=D$ so we can
naturally make a one to one correspondence between vector fields that take values on $D$ and linear functions on the phase space $D^*$.
This observation goes back to Iliev \cite{Iliev1, Iliev2} and to some extent Agostinelli \cite{Agostinelli}. Let $Z$ be a vector field on $Q$ taking  values on $D$. It can  be written as a linear combination of the basis of sections of $D$ as
\begin{equation*}
Z=Z^\alpha(q)X_\alpha(q),
\end{equation*}
for certain functions $Z^\alpha\in C^\infty(Q)$. We denote by $p_Z\in C^\infty(D^*)$ the linear function associated to $Z$. In terms of the 
coordinates $(q^i,\pi_\alpha)$ we have
\begin{equation*}
p_Z(q,\pi)=Z^\alpha(q)\pi_\alpha.
\end{equation*}
In particular note that $p_{X_\alpha}=\pi_\alpha$ for all $\alpha=1, \dots, k$.

Now consider the Lagrangian formulation given by equations \eqref{E:MotionDQuasi}, \eqref{E:MotionKinematic} defined on $D$.
This approach is followed by many recent references \cite{Fasso1,Fasso2}.  With a slight abuse of notation 
we denote the  function $p_Z\circ \mathrm{Leg}_c  \in C^\infty(D)$  also by $p_Z$. It has the local expression
\begin{equation*}
p_Z(q,v)=Z^\alpha(q)\frac{\partial L_c}{\partial v^\alpha}=Z^\alpha(q)\mathcal{G}_{\alpha \beta}(q)v^\beta.
\end{equation*}

\begin{definition}
The vector field $Z$ taking values on $D$ is called   the \emph{generator} of the linear function $p_Z$. 
\end{definition}
We stress that the generator of a linear function is uniquely determined by the condition that it
is a section of $D$. Also, linear functions on $D$ (or $D^*$) are independent over the set of points where the generating vector fields are
linearly independent.

As it is shown in \cite{Iliev2}, the evolution of $p_Z$ along the nonholonomic Lagrangian system  \eqref{E:MotionDQuasi}, \eqref{E:MotionKinematic} on
$D$ is given by
\begin{equation}
\label{E:pZdot}
\dot p_Z = \left . Z^{TQ}[L] \right |_D
\end{equation}
where $Z^{TQ}$ denotes the {\em tangent lift} of the generator vector field $Z$ (the expression for $Z^{TQ}$ in bundle coordinates $(q^i, \dot q^i)$ is given 
 in \eqref{E:TanLift} below).

We now present an alternative characterisation of the condition that $p_Z$ is a first integral. Recall that a vector field $Z$ on $Q$ acts by infinitesimal isometries if the Lie derivative $\pounds_Z\mathcal{G}$ of the metric along $Z$ vanishes. This suggests the following definition.

\begin{definition}
A vector field $Z$ on $Q$ acts by \emph{infinitesimal isometries on} $D$ if $(\pounds_Z\mathcal{G})(u,u)=0$ for all $u\in D$.
\end{definition}

Note that this does not assume that the flow defined by $Z$ preserves the distribution $D$. The following is a result from \cite{Fasso1}, where they allow $Z$ to be a section of the {\em reaction annihilator distribution}, which contains $D$.

\begin{theorem}
\label{Th:isometries on D}
Consider a nonholonomic Lagrangian system with constraint distribution $D$ and Lagrangian $L(q,u)=\mathcal{G}(u,u)-V(q)$.  Let $Z$ be a section of the distribution $D$. 
Then the momentum $p_Z$ generated by $Z$ is a first integral of the Lagrangian system if and only if $Z$ acts by infinitesimal isometries on $D$ and annihilates the potential energy $V$.
\end{theorem}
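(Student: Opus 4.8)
The plan is to use the evolution formula \eqref{E:pZdot}, namely $\dot p_Z = Z^{TQ}[L]\big|_D$, and to rewrite the right-hand side so that the two conditions on $Z$ become manifest. First I would recall the coordinate expression for the tangent lift: if $Z = Z^i(q)\partial_{q^i}$ then
\begin{equation*}
Z^{TQ} = Z^i(q)\,\partial_{q^i} + \dot q^j\frac{\partial Z^i}{\partial q^j}\,\partial_{\dot q^i}.
\end{equation*}
Applying this to $L(q,\dot q) = \tfrac12\langle\dot q,\dot q\rangle - V(q) = \tfrac12 g_{ij}(q)\dot q^i\dot q^j - V(q)$ and collecting terms, one finds
\begin{equation*}
Z^{TQ}[L] = \tfrac12\Big(Z^k\partial_{q^k}g_{ij} + g_{kj}\partial_{q^i}Z^k + g_{ik}\partial_{q^j}Z^k\Big)\dot q^i\dot q^j - Z^k\partial_{q^k}V = \tfrac12(\pounds_Z\mathcal{G})_{ij}\,\dot q^i\dot q^j - Z[V],
\end{equation*}
so that $Z^{TQ}[L] = \tfrac12(\pounds_Z\mathcal{G})(\dot q,\dot q) - Z[V]$ as a function on $TQ$. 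Restricting to $D$ replaces $\dot q$ by an arbitrary $u\in D_q$, giving $\dot p_Z = \tfrac12(\pounds_Z\mathcal{G})(u,u) - Z[V]$ along the dynamics.

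For the \textbf{if} direction: if $Z$ acts by infinitesimal isometries on $D$ then $(\pounds_Z\mathcal{G})(u,u)=0$ for all $u\in D$, and if $Z[V]=0$ then the whole right-hand side vanishes on $D$, so $\dot p_Z=0$ and $p_Z$ is a first integral. For the \textbf{only if} direction, suppose $p_Z$ is a first integral, so $\tfrac12(\pounds_Z\mathcal{G})(u,u) = Z[V]$ for every $u\in D_q$ and every $q$. The left-hand side is a homogeneous quadratic function of $u$ while the right-hand side is independent of $u$; evaluating at $u=0$ (which lies in $D_q$) forces $Z[V]=0$, and then the quadratic form $(\pounds_Z\mathcal{G})(u,u)$ vanishes identically on $D_q$, i.e.\ $Z$ acts by infinitesimal isometries on $D$. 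This is the standard degree-in-the-velocities argument and it closes both implications cleanly.

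The only real subtlety — and the step I would be most careful about — is the justification of \eqref{E:pZdot} restricted to $D$ together with the homogeneity argument: one must make sure that the relation $\tfrac12(\pounds_Z\mathcal{G})(u,u) = Z[V]$ is required to hold not merely along a single trajectory but for \emph{all} points of $D$. This is legitimate because through every point $(q,u)\in D$ there passes an integral curve of the nonholonomic vector field $Y_{nh}$ (indeed $Y_{nh}$ is second order, so its value at $(q,u)$ projects to $u$), so ``$\dot p_Z \equiv 0$ along every solution'' is equivalent to ``the function $(q,u)\mapsto \tfrac12(\pounds_Z\mathcal{G})(u,u) - Z[V]$ vanishes on all of $D$''. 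Once that is in place, separating by homogeneity degree in $u$ is immediate and requires no further computation. I would also remark that the argument uses only that $Z$ is a section of $D$ through formula \eqref{E:pZdot}; no assumption that the flow of $Z$ preserves $D$ is needed, consistent with the statement and with the more general version in \cite{Fasso1} allowing $Z$ in the reaction annihilator distribution.
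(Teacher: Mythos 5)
Your proposal is correct and follows essentially the same route as the paper: it uses $\dot p_Z = Z^{TQ}[L]\big|_D$, separates the potential and kinetic contributions by evaluating at $u=0$ (the paper's ``restricting to the zero section''), and identifies $Z^{TQ}[\mathcal{G}(u,u)]$ with $(\pounds_Z\mathcal{G})(u,u)$ by the same local-coordinate computation that the paper isolates as Lemma~4.5. Your explicit remark that the vanishing of $\dot p_Z$ along all solutions is equivalent to the pointwise vanishing on $D$ (because the nonholonomic vector field is second order, hence has an integral curve through every point of $D$) is a welcome clarification of a step the paper leaves implicit, but it does not change the argument.
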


\begin{proof}
We have $\dot p_Z=\left.Z^{TQ}[L]\right |_D$, so $p_Z$ is conserved if and only if the right hand side vanishes.  We show this is equivalent to annihilating the potential and kinetic energies separately, and then show the latter is equivalent to being an infinitesimal isometry on $D$ (which follows from the lemma below).  One implication is clear \emph{a fortiori}. For the converse, if $Z^{TQ}$ annihilates $L$ on $D$, then restricting to the zero section shows that $Z[V]=0$. Since $V$ is a function on $Q$, it follows that $Z^{TQ}$ annihilates its pull-back to $TQ$ (and hence to $D$).  Since $L$ and $V$ are both annihilated by $Z^{TQ}$ (on $D$), it follows that $Z^{TQ}$ also annihilates the metric (kinetic energy) restricted to $D$.  That this is equivalent to $Z$ being an isometry on $D$ follows from the following lemma.
\end{proof}

\begin{lemma}
Let $Z$ be any vector field on $Q$,  and $\mathcal{G}$ the metric tensor. Then as functions on  $TQ$,
$$(\pounds_Z\mathcal{G})(u,u) = {Z^{TQ}}\left [ \mathcal{G}(u,u) \right ].$$
\end{lemma}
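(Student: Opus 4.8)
The plan is to verify the identity by a direct coordinate computation, using the standard formula for the tangent lift of a vector field and the coordinate expression for the Lie derivative of a symmetric $(0,2)$-tensor. Write $Z = Z^i(q)\,\partial_{q^i}$ on $Q$. Its tangent lift to $TQ$ has the bundle-coordinate expression
\begin{equation}
\label{E:TanLift}
Z^{TQ} = Z^i(q)\,\partial_{q^i} + \frac{\partial Z^i}{\partial q^j}\,\dot q^j\,\partial_{\dot q^i},
\end{equation}
which is precisely the reference the excerpt already anticipates. The function on $TQ$ in question is $F(q,\dot q) := \mathcal{G}(u,u) = \mathcal{G}_{ij}(q)\,\dot q^i\dot q^j$, where $u = \dot q^i\partial_{q^i}$ is the tautological vector along the fibres.

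The key step is then to apply $Z^{TQ}$ to $F$ term by term. The first part, $Z^i\partial_{q^i}$, acting on $F$ produces $Z^k\dfrac{\partial \mathcal{G}_{ij}}{\partial q^k}\dot q^i\dot q^j$, since $\dot q^i, \dot q^j$ are independent of the base coordinates. The second part, $\dfrac{\partial Z^k}{\partial q^j}\dot q^j\,\partial_{\dot q^k}$, acting on $F = \mathcal{G}_{ab}\dot q^a\dot q^b$ produces, by the Leibniz rule on the two velocity factors, $2\,\mathcal{G}_{ak}\dfrac{\partial Z^k}{\partial q^j}\dot q^a\dot q^j$. Adding these, one obtains
\begin{equation*}
Z^{TQ}[F] = \left( Z^k\frac{\partial \mathcal{G}_{ij}}{\partial q^k} + \mathcal{G}_{kj}\frac{\partial Z^k}{\partial q^i} + \mathcal{G}_{ik}\frac{\partial Z^k}{\partial q^j}\right)\dot q^i\dot q^j,
\end{equation*}
after relabelling indices to symmetrise the last two terms over $i,j$. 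This is exactly the classical coordinate formula for $(\pounds_Z\mathcal{G})_{ij}$, so the bracket equals $(\pounds_Z\mathcal{G})_{ij}\dot q^i\dot q^j = (\pounds_Z\mathcal{G})(u,u)$, as claimed.

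I would present this essentially as above, perhaps recalling the coordinate formula $(\pounds_Z\mathcal{G})_{ij} = Z^k\partial_{q^k}\mathcal{G}_{ij} + \mathcal{G}_{kj}\partial_{q^i}Z^k + \mathcal{G}_{ik}\partial_{q^j}Z^k$ for reference, or alternatively giving a coordinate-free one-line argument: both sides are fibrewise-quadratic functions on $TQ$, and evaluating along an arbitrary integral curve $q(t)$ of a vector field extending $u$ at a point, $\frac{d}{dt}\mathcal{G}(\dot q,\dot q)$ decomposes into the flow of $Z$ acting on the metric plus the acceleration terms, which match the tangent-lift decomposition. There is no real obstacle here: the only thing to be careful about is the bookkeeping of the factor of $2$ coming from the two velocity slots and the symmetrisation of the derivative-of-$Z$ terms, ensuring the final expression is genuinely the symmetric Lie-derivative tensor contracted twice with $u$ rather than a non-symmetrised variant. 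Since both sides are manifestly symmetric quadratic forms in $u$, this is automatic once the computation is organised correctly.
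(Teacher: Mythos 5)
Your proof is correct and follows essentially the same route as the paper: both compute $Z^{TQ}[\mathcal{G}(u,u)]$ directly in bundle coordinates using the standard expression for the tangent lift and identify the result with the coordinate formula for $(\pounds_Z\mathcal{G})(u,u)$. The only difference is that you spell out the symmetrisation of the $\partial Z^k/\partial q^i$ terms explicitly, while the paper simply cites the textbook formula for the Lie derivative of the metric.
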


\begin{proof}
In local bundle coordinates, the tangent lift of $Z$ is
\begin{equation}
\label{E:TanLift}
Z^{TQ} = Z^j\partial_{q^j} + \frac{\partial Z^k}{\partial q^\ell}\dot q^\ell\partial_{\dot q^k}.
\end{equation}
Applying this to $\mathcal{G}(u,u)=\mathcal{G}_{ij}u^iu^j$ gives
$$Z^{TQ}[\mathcal{G}(u,u)] = Z^k\frac{\partial \mathcal{G}_{ij}}{\partial q^k} u^iu^j + 2 \mathcal{G}_{kj}\frac{\partial Z^k}{\partial q^\ell}u^\ell u^j.$$
But this is exactly the expression for $(\pounds_Z\mathcal{G})(u,u)$ --- see for example \cite[p.55]{Jost}. 
\end{proof}

\subsection{Gauge momenta}

An important class of linear first integrals that may exist  in the presence of a symmetry group is that of {\em gauge momenta}. 
This terminology was first used in \cite{Bates} where the authors indicate the existence of this kind of integrals in some classical
examples of nonholonomic systems. Further research on their properties can be found in \cite{Fasso2}, \cite{Fasso3}, where they are called  {\em horizontal} gauge momenta.

 Let $G$  be a Lie group that acts properly on $Q$, and suppose that
the lift of $G$ to $TQ$ leaves the Lagrangian $L$ and the constraint distribution $D$ invariant. It follows that $G$ acts by isometries on $Q$ (with respect to the
kinetic energy metric), and that the potential $V\in C^\infty(Q)$ is $G$-invariant. Define by $\mathcal{S}$ the, possibly non-regular, distribution on $Q$
defined by
\begin{equation*}
\mathcal{S}_q:=D_q \cap \g\cdot q.
\end{equation*}
Here $\g$ is the group's Lie algebra and $\g\cdot q$ denotes the tangent space to the group orbit at $q$.  We assume that this distribution is regular on $Q_f$, and only changes rank possibly at points where the action of $G$ fails to be free.

\begin{definition}
A linear first integral of a nonholonomic system is called a \emph{gauge momentum} associated to the $G$-action if its unique generator
on $D$ is a section of $\mathcal{S}$.
\end{definition}

We will be especially interested in  $G$-\emph{invariant gauge momenta}. In addition to being a section of $\mathcal{S}$, the generator of such integrals is $G$-equivariant.

\subsection{Reduction} 
We present a basic outline of the almost Poisson reduction of nonholonomic systems
with possibly non-free actions.
We continue to work under the assumptions  that were introduced above. Namely,
there is an action of the Lie group $G$ on the configuration space $Q$ 
whose lift to $TQ$ preserves the
Lagrangian $L$ and the constraint distribution $D$. 

%

Via the constrained Legendre transform, the restricted action on $D$ defines a $G$-action on $D^*$ that leaves the constrained Hamiltonian $H_c$ \eqref{E:HamExp} and the bivector $\Pi_{nh}$  \eqref{E:PiCoords}
invariant.  The {\em reduced Hamiltonian} is the unique function, which we also denote $H_c$, on the orbit space $D^*/G$ whose pull-back to $D^*$ is $H_c$.  If the $G$-action on $Q$ is free then this orbit space can be expressed as a bundle of rank $r$ over the `shape space' $Q/G$ (where $r$ is the rank of $D$).  In general, if the  action is not free, the quotient $D^*/G$ is a stratified space.

As is usual, one identifies smooth functions on $D^*/G$ with smooth $G$-invariant functions on $D^*$.  Since the nonholonomic (almost) Poisson structure is invariant,  it follows that $\{f,g\}_{nh}$ is invariant whenever $f$ and $g$ are, and hence $\{ \cdot , \cdot \}_{nh}$  descends to an almost Poisson structure on $D^*/G$, which we continue to denote $\{ \cdot , \cdot \}_{nh}$.  One can, by the usual formula, define  Hamiltonian `vector fields' on $D^*/G$ from this bracket, and it turns out that these are genuine vector fields on, or tangent to,  each stratum.  In particular, the Hamiltonian vector field defined by the reduced Hamiltonian $H_c$ is the \emph{reduced Hamiltonian vector field}. The flow of this reduced Hamiltonian vector field is the projection to $D^*/G$ of the flow of the original Hamiltonian vector field on $D^*$ given by equations \eqref{E:MotionHam}. 

Further details about the reduction procedure when the action is not free can be found in the book of Cushman, Duistermaat and \'Sniatycki \cite{CDS-book}.

The main point of our paper is that given any $G$-invariant 3-form $\Lambda$ on $Q$ as constructed in Section \ref{S:GaugeT}, the reduction outlined above follows 
{\em mutatis mutandis} for the bracket on $D^*$ defined by the bivector $\Pi_{nh}^\Lambda$. In the next section
we see how to choose $\Lambda$ so that the invariant gauge momenta are Casimir functions of the reduced system.

\section{Invariant gauge momenta are Casimirs of a reduced bracket}
\label{S:Casimirs}

In this section we continue to work under the assumption that there is a Lie group $G$ acting on $Q$ and 
preserving both the mechanical Lagrangian $L$ and the constraint distribution $D$.
Moreover, we shall assume that the $G$-action on $Q$ is proper and that the
isotropy of a generic point $q\in Q$ is trivial. We denote by $Q_f$ the set of points in $Q$ having trivial isotropy. Then $Q_f$ is
a $G$-invariant  open dense subset of $Q$ and the restriction of the $G$ action to $Q_f$ is free and proper.

We suppose that, on $Q_f$ there exist $\ell$ linearly independent gauge momenta with $G$-equivariant generators
$Z_b$, for $b=1,\dots, \ell$. We note that  linear combinations of these vector fields with {\em constant} coefficients are 
also generators of gauge momenta.

\subsection{Adapted bases and preliminary results} \label{SS:AdaptedBases}

\begin{definition}
\label{E:DefAdaptedBasis}
A basis $\{X_\alpha\}$ of sections of $D$ defined on an open subset $U$ of $Q_f$ is said to be \emph{symmetry-momentum adapted} (or simply \emph{adapted}) if it
can be written as $\{X_\alpha\}= \{Z_b, Y_I\}$ where:
\begin{enumerate}
\item the vector fields $\{Z_b\}$ generate gauge momenta,
\item the vector fields $Z_b$ and $Y_I$ are $G$-equivariant.
\end{enumerate}
\end{definition}
Here we refine the convention on the indices introduced in Section \ref{SS:Lagrangian} where $\alpha,\beta,\dots$ run from $1,\dots,r$ (where $r=\text{rank}(D)$) by using
\begin{itemize}
\item lower case latin indices\footnote{we do not use the letter $a$ since its 
 typography  is very similar to that of $\alpha$.} $b, c, d, \dots$ running from $1$ to $\ell$, 
 \item  upper case latin indices $I,J,K, \dots$ running from $\ell +1$ to $r$.
\end{itemize}

The following lemma shows that it is possible to extend locally the gauge momentum generators $\{Z_b\}$ to a minimal set of generators of $D$ that is adapted at points of $Q_f$.  Note that if a distribution is regular, then a minimal set of generators is a set of vector fields that defines a basis of the distribution at each point.

\begin{lemma}
\label{L:ExistBasis}
 Let $Z_b$, for $b=1,\dots,\ell$, be given linearly independent equivariant vector fields in $D$ and let $q\in Q_f$.  There is a $G$-invariant neighbourhood $U$ of $q$ in $Q_f$ on which there exist equivariant vector fields $Y_I$ ($I=\ell+1,\dots,r$) such that $\{Z_b,Y_I\}$ generate sections of $D$ on $U$.
\end{lemma}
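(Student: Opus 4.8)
The plan is to build the vector fields $Y_I$ by a quotient-then-lift argument, exploiting that $D$ and the span of the $Z_b$ are both $G$-invariant subbundles and that the $G$-action on $Q_f$ is free and proper. First I would note that since the action on $Q_f$ is free and proper, $\pi\colon Q_f\to Q_f/G=:\Sigma$ is a principal $G$-bundle, and since $D$ is $G$-invariant and the $Z_b$ are equivariant and pointwise linearly independent, both $D$ and the subbundle $\mathcal{Z}:=\mathrm{span}\{Z_b\}$ descend to genuine vector subbundles of $TQ_f$ along the fibres; more precisely, working $G$-equivariantly, the quotient bundle $D/\mathcal{Z}$ is a well-defined $G$-equivariant vector bundle over $Q_f$ of rank $r-\ell$. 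Over a point $q$ pick a small enough slice/trivialising neighbourhood so that $D/\mathcal{Z}$ is equivariantly trivial there; this is where properness and freeness are used.

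The key step is then to produce equivariant sections of $D/\mathcal{Z}$ that form a basis fibrewise, and lift them to equivariant sections of $D$. Concretely: choose any local basis of sections $\bar Y_{\ell+1},\dots,\bar Y_r$ of $D/\mathcal{Z}$ over a plain (not necessarily invariant) neighbourhood $V$ of $q$ in $Q_f$; then average/transport them over the $G$-orbits to obtain equivariant sections over the saturation $U:=G\cdot V'$ for a possibly smaller $V'\ni q$ — here one uses that the bundle is equivariantly trivial over the chosen slice neighbourhood, so equivariant sections are the same as arbitrary sections over the slice, extended by equivariance. Shrinking $V'$ if necessary preserves the condition that these sections remain pointwise linearly independent in $D/\mathcal{Z}$, by continuity. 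Finally, lift each $\bar Y_I$ to a vector field $Y_I$ that is a section of $D$ over $U$: since $\mathcal{Z}\subset D$ is an invariant subbundle with invariant complement available locally (again by triviality over the slice), one can choose the lift to be equivariant. By construction $\{Z_b,Y_I\}$ are equivariant sections of $D$ on $U$ that are pointwise a basis, i.e.\ they generate $D$ on $U$.

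The main obstacle I expect is the bookkeeping around \emph{equivariance} of the local construction: a naive choice of $Y_I$ near $q$ is not equivariant, and one must ensure that the process of making it equivariant (by transporting along orbits, or equivalently by working in a $G$-slice where an equivariant frame exists because the isotropy is trivial) does not destroy linear independence. This is handled by restricting attention to a tube $G\times_{\{e\}}S\cong G\times S$ around the orbit of $q$ (using properness and triviality of the isotropy to get a global slice $S$), on which every equivariant bundle is trivial; choosing the $Y_I$ on $\{e\}\times S$ arbitrarily and extending by the $G$-action gives equivariant sections, and shrinking $S$ keeps them a basis. The remaining verifications — that $D/\mathcal{Z}$ is a well-defined equivariant bundle, and that an equivariant lift of an equivariant section through $\mathcal{Z}\hookrightarrow D$ exists locally — are routine given the local triviality on the tube.
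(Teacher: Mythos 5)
Your proposal is correct and rests on the same key mechanism as the paper's proof: since the isotropy at $q$ is trivial, a tubular neighbourhood is equivariantly $G\times S$ for a slice $S$, so one may choose the complementary frame arbitrarily on $S$, extend by equivariance, and preserve linear independence by shrinking. The detour through the quotient bundle $D/\mathrm{span}\{Z_b\}$ is harmless but unnecessary --- the paper simply extends $\{Z_b\}$ to a local frame $\{Z_b,V_I\}$ of $D$ and equivariantizes the $V_I$ directly.
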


\begin{proof}
Let $S$ be a submanifold of $Q_f$ for which $T_qS\oplus T_q(G\cdot q) = T_qQ$ (that is, $S$ is a slice to the orbit). Then the natural map $G\times S\to Q$ defined by $(g,s) \longmapsto g\cdot s$ defines an equivariant diffeomorphism in a neighbourhood of $Q$ and hence in a neighbourhood of $G\cdot q$ (by the inverse function theorem). The image $U$ of such a neighbourhood is called a \emph{tubular neighbourhood} of $q$ (or of the orbit $G\cdot q$).  

Let $V$ be any vector field on $Q$ defined in a neighbourhood of $q$. Then its restriction to $S$ can be extended to an equivariant vector field $Y$ on the tubular neighbourhood simply by the formula
$$Y(g,s) = TgV(s).$$
If $V$ is a section of a $G$-invariant distribution, then so is the resulting equivariant vector field $Y$.

Since $D$ is a smooth distribution, the given set of vector fields $\{Z_b\}$ can be extended to a minimal set of generators $\{Z_b,V_I\}$ of $D$.  For each of the $V_I$, restrict to $S$ and extend by equivariance to define $Y_I$ as above.  These span $D$ by dimension count: they remain linearly independent in a neighbourhood of $q$ (as they have the same value at $q$ as the $V_I$), and they are sections of $D$.
\end{proof}

The proof of the lemma uses in an essential way that the basis point $q$ has trivial isotropy. At points with non-trivial isotropy one would not expect there to be a set of equivariant vector fields that span $D$.

The following theorem shows that, for an adapted basis $\{Z_b, Y_\alpha\}$, the coefficients 
\begin{equation*}
C_{\alpha \beta \gamma}:=\langle [X_\alpha , X_\beta ] , X_\gamma \rangle
\end{equation*}
satisfy
\begin{equation}
\label{E:GaugeSkew}
C_{b\alpha \beta}= - C_{b \beta \alpha}.
\end{equation}
This is central to the definition of the 3-form $\Lambda$ in Lemma \ref{L:Global3form} below. Moreover, this property serves to characterise gauge momenta.

\begin{theorem}
\label{Th:skew-gauge-mom} 
Let $\{X_\alpha\}$ ($\alpha=1,\dots,r=\dim(D)$) be globally defined equivariant vector fields on $Q$ which on $Q_f$ generate $D$.  Suppose moreover that $X_1$ is a section of $\mathcal{S}$. Then $X_1$ generates a gauge momentum if and only if, for all $\alpha,\beta$,
\begin{equation*}
C_{1\alpha \beta}= - C_{1\beta \alpha}.
\end{equation*}
\end{theorem}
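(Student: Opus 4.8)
The plan is to combine the evolution formula \eqref{E:pZdot} for linear momenta with Theorem~\ref{Th:isometries on D}, translating the isometry condition into a statement about the structure coefficients $C_{\alpha\beta\gamma}$. The key observation is that $\langle[X_\alpha,X_\beta],X_\gamma\rangle$ together with the Koszul-type formula for the Levi-Civita connection lets one express $(\pounds_{X_1}\mathcal{G})(X_\alpha,X_\beta)$ purely in terms of the $C$-coefficients (and derivatives of $\mathcal{G}_{\alpha\beta}$, which cancel appropriately because the $X_\alpha$ are $G$-equivariant and $G$ acts by isometries). So the first step is to write down the identity
\begin{equation*}
(\pounds_{X_1}\mathcal{G})(X_\alpha,X_\beta) = X_1\langle X_\alpha,X_\beta\rangle - \langle[X_1,X_\alpha],X_\beta\rangle - \langle X_\alpha,[X_1,X_\beta]\rangle,
\end{equation*}
which is just the Leibniz rule for the Lie derivative, valid for any vector fields. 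In our index notation the right-hand side is $X_1(\mathcal{G}_{\alpha\beta}) - C_{1\alpha}^{\ \gamma}\mathcal{G}_{\gamma\beta} - C_{1\beta}^{\ \gamma}\mathcal{G}_{\gamma\alpha} = X_1(\mathcal{G}_{\alpha\beta}) - C_{1\alpha\beta} - C_{1\beta\alpha}$, using \eqref{E:defCalbega}.

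Next I would use the hypothesis that $X_1$ is a section of $\mathcal{S} = D\cap(\g\cdot q)$, so $X_1$ is a (configuration-dependent) combination of infinitesimal generators of the $G$-action; since $G$ acts by isometries, this means $X_1$ acts by infinitesimal isometries \emph{on all of $TQ$} in the sense that can be detected as follows — actually the cleaner route is: $X_1$ being a section of $\mathcal S$ does \emph{not} by itself give an isometry, but by Theorem~\ref{Th:isometries on D}, $p_{X_1}$ is a first integral iff $X_1$ acts by infinitesimal isometries on $D$ and $X_1[V]=0$. The potential term is handled immediately: $X_1$ is tangent to the group orbits and $V$ is $G$-invariant, so $X_1[V]=0$ automatically. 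Therefore $p_{X_1}$ is a gauge momentum iff $(\pounds_{X_1}\mathcal{G})(u,u)=0$ for all $u\in D$, equivalently $(\pounds_{X_1}\mathcal{G})(X_\alpha,X_\beta)+(\pounds_{X_1}\mathcal{G})(X_\beta,X_\alpha)=0$ for all $\alpha,\beta$, i.e. the symmetric part in $(\alpha,\beta)$ of the quantity computed above vanishes.

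The final step is the bookkeeping: the quantity $X_1(\mathcal{G}_{\alpha\beta}) - C_{1\alpha\beta} - C_{1\beta\alpha}$ is manifestly symmetric in $\alpha\leftrightarrow\beta$ (each term is), so the symmetrized isometry condition is simply $X_1(\mathcal{G}_{\alpha\beta}) = C_{1\alpha\beta} + C_{1\beta\alpha}$ for all $\alpha,\beta$. Now I invoke equivariance: since all the $X_\alpha$ are $G$-equivariant and $X_1$ is tangent to the orbits and $G$ acts by isometries, $X_1\langle X_\alpha,X_\beta\rangle = \langle[X_1,X_\alpha],X_\beta\rangle + \langle X_\alpha,[X_1,X_\beta]\rangle$ holds identically — that is, $(\pounds_{X_1}\mathcal G)(X_\alpha,X_\beta)=0$ on the nose, which forces $X_1(\mathcal G_{\alpha\beta}) = C_{1\alpha\beta}+C_{1\beta\alpha}$ to hold \emph{always}, regardless of whether $p_{X_1}$ is conserved. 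Hmm — that would make the condition vacuous, so this cannot be the right reading; the point must instead be that the isometry-on-$D$ condition for a section of $\mathcal S$ is genuinely weaker than full isometry, because $X_1$ is only a \emph{pointwise} combination $X_1 = f^a(q)\,\xi_a^Q$ of generators $\xi_a^Q$, and the derivatives of the coefficients $f^a$ obstruct $\pounds_{X_1}\mathcal G$ from vanishing. I would therefore make this explicit: write $X_1 = f^a \xi_a^Q$, compute $(\pounds_{X_1}\mathcal G)(u,u) = df^a(u)\,\mathcal G(\xi_a^Q,u)$ for $u\in D$ (the terms with $\pounds_{\xi_a^Q}\mathcal G$ drop out by isometry), and observe $\mathcal G(\xi_a^Q,u) = \langle X_1\text{-type vector}, u\rangle$-expressions that reassemble into the $C$-coefficients; the upshot is that $(\pounds_{X_1}\mathcal G)(X_\alpha,X_\beta)$ equals the \emph{symmetric} part of $-\bigl(C_{1\alpha\beta}+C_{1\beta\alpha} - X_1(\mathcal G_{\alpha\beta})\bigr)$ but where now the non-equivariance of $X_1$ as a genuine vector field means $X_1(\mathcal G_{\alpha\beta}) \ne C_{1\alpha\beta}+C_{1\beta\alpha}$ in general, and conservation of $p_{X_1}$ is exactly the statement that the antisymmetric obstruction $C_{1\alpha\beta}+C_{1\beta\alpha}$ vanishes.

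I expect the main obstacle to be precisely this last point: disentangling what $X_1$ being a section of $\mathcal S$ (a gauge symmetry) contributes versus what full $G$-equivariance contributes, and showing cleanly that the residual condition is exactly $C_{1\alpha\beta}=-C_{1\beta\alpha}$. The cleanest way to close this gap is probably \emph{not} to unpack $X_1 = f^a\xi_a^Q$ but to argue directly from \eqref{E:pZdot}: compute $X_1^{TQ}[L_c]$ (or $X_1^{TQ}[L]$) restricted to $D$ in the adapted frame, using $L_c = \tfrac12\mathcal G_{\alpha\beta}v^\alpha v^\beta - V$, and show via the tangent-lift formula \eqref{E:TanLift} that $\dot p_{X_1} = \tfrac12\bigl(C_{1\alpha\beta}+C_{1\beta\alpha}\bigr)v^\alpha v^\beta$ (the $V$ and $\mathcal G$-derivative terms organizing themselves into the structure constants via $\rho^i_\alpha$ and the definition of the $C$'s), so that $p_{X_1}$ is conserved iff this quadratic form in $v$ vanishes identically, which is iff $C_{1\alpha\beta}=-C_{1\beta\alpha}$. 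I would carry out that computation in the adapted frame and present it as the core of the proof, citing Theorem~\ref{Th:isometries on D} and equation \eqref{E:pZdot} for the conservation criterion and \eqref{E:defCalbega} for the identification of the coefficients.
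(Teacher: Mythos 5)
Your setup coincides with the paper's: the Leibniz identity $(\pounds_{X_1}\mathcal{G})(X_\alpha,X_\beta)=X_1\langle X_\alpha,X_\beta\rangle-C_{1\alpha\beta}-C_{1\beta\alpha}$, the observation that $X_1[V]=0$ because $X_1$ is tangent to the group orbits and $V$ is invariant, and the reduction via Theorem~\ref{Th:isometries on D} to the condition that $X_1$ acts by infinitesimal isometries on $D$, which by polarization (the frame spans $D$ over $Q_f$ and $\pounds_{X_1}\mathcal{G}$ is symmetric) reads $(\pounds_{X_1}\mathcal{G})(X_\alpha,X_\beta)=0$ for all $\alpha,\beta$. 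But the one step that closes the argument is missing, and in its place you assert something false. The correct consequence of equivariance is not that $(\pounds_{X_1}\mathcal{G})(X_\alpha,X_\beta)$ vanishes identically; it is that the \emph{function} $\mathcal{G}_{\alpha\beta}=\langle X_\alpha,X_\beta\rangle$ is $G$-invariant (each $X_\alpha$ is equivariant and the metric is invariant), so that $X_1[\mathcal{G}_{\alpha\beta}]=0$ because $X_1$, being a section of $\mathcal{S}$, is tangent to the orbits. Feeding this into the Leibniz identity gives $(\pounds_{X_1}\mathcal{G})(X_\alpha,X_\beta)=-(C_{1\alpha\beta}+C_{1\beta\alpha})$ \emph{unconditionally}, and the theorem follows at once. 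The statement is not vacuous because $X_1$ is a configuration-dependent combination of generators and hence generally not a Killing field, so $\pounds_{X_1}\mathcal{G}$ has no reason to vanish even though $X_1[\mathcal{G}_{\alpha\beta}]$ does: your claim that ``$X_1\langle X_\alpha,X_\beta\rangle=\langle[X_1,X_\alpha],X_\beta\rangle+\langle X_\alpha,[X_1,X_\beta]\rangle$ holds identically'' is precisely the assertion that $\pounds_{X_1}\mathcal{G}$ vanishes on the frame, i.e.\ the very property being characterized, not a consequence of the hypotheses. You correctly sense the resulting contradiction, but you never land on the fact $X_1[\mathcal{G}_{\alpha\beta}]=0$ that resolves it.

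Your fallback route --- computing $\dot p_{X_1}$ directly from \eqref{E:pZdot} in the adapted frame --- is sound in outline and essentially equivalent, but it needs the same missing fact: the computation yields $\dot p_{X_1}=\tfrac12\bigl(X_1[\mathcal{G}_{\alpha\beta}]-C_{1\alpha\beta}-C_{1\beta\alpha}\bigr)v^\alpha v^\beta$, and only after discarding $X_1[\mathcal{G}_{\alpha\beta}]$ by the invariance argument above does one conclude that $p_{X_1}$ is conserved if and only if the (already symmetric) quantity $C_{1\alpha\beta}+C_{1\beta\alpha}$ vanishes. Without that observation the characterization you would obtain is $X_1[\mathcal{G}_{\alpha\beta}]=C_{1\alpha\beta}+C_{1\beta\alpha}$, not the skew-symmetry in the statement. (A minor additional slip: for $X_1=f^a\xi_a^Q$ one has $(\pounds_{X_1}\mathcal{G})(u,u)=2\,df^a(u)\,\mathcal{G}(\xi_a^Q,u)$, with a factor of $2$; this does not affect the structure of the argument.)
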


\begin{proof}
Consider the quantities $F_{\alpha\beta}=\left<X_\alpha,X_\beta\right>$.
Since the $X_\alpha$ are equivariant and the metric is $G$-invarant, the $F_{\alpha\beta}$ are invariant functions.  Since $X_1$ is tangent to group orbits, it follows that $X_1[F_{\alpha\beta}]=0$. Thus
\begin{eqnarray*}
	0&=&X_1\left [ F_{\alpha\beta} \right ]\\
	&=& \pounds_{X_1}(\mathcal{G})(X_\alpha,X_\beta)+\left<[X_1,X_\alpha],X_\beta\right>+\left<X_\alpha,[X_1,X_\beta]\right> \\
	&=& \pounds_{X_1}(\mathcal{G})(X_\alpha,X_\beta)+C_{1 \alpha \beta} + C_{1 \beta \alpha}.
\end{eqnarray*}
Here $\pounds_{X_1}(\mathcal{G})$ is the Lie derivative of the metric along $X_1$. Since $X_1$ is a section of $\mathcal{S}$ it follows that it annihilates the potential energy. Consequently, by Theorem \ref{Th:isometries on D}, $X_1$ generates a gauge momentum if and only if it is an infinitesimal isometry on $D$. Over $Q_f$ this latter condition is equivalent, by definition, to $\pounds_{X_1}(\mathcal{G})(X_\alpha,X_\beta)=0$ for all $\alpha,\beta=1,\dots,r$. This proves the theorem over $Q_f$.

Finally, note that if $p_{X_1}$ is a gauge momentum on $D^*$ restricted to $Q_f$, it is also one over all of $D^*$, since $Q_f$ is an open dense subset of $Q$.
\end{proof}

This theorem can be used to find gauge momenta in examples. See the treatment of the  solid of revolution that rolls without slipping
on the plane in Section \ref{SS:SolidRev}.

\subsection{The 3-forms $\Lambda$.} \label{SS:3form} From now on, we  assume that the subbundle $W$ of $TQ$ with the property 
$TQ=D\oplus W$ is $G$-invariant; this implies that the identification of $D^*$ with a subbundle of $T^*Q$ is equivariant.  A possibility  to achieve this is to take $W=D^\perp$ but, as
indicated before,  other choices may simplify coordinate calculations 
in concrete examples.

\begin{lemma}
\label{L:Global3form}
There exists a globally defined $G$-invariant 3-form $\Lambda$ on $Q_f$ with the following properties
\begin{enumerate}
\item $\Lambda$ vanishes upon contraction with  elements of $W$ (i.e. it can be interpreted as a section of $\wedge^3(D^*)$),
\item if   $\{X_\alpha\}= \{Z_b, Y_I\}$ is an adapted basis of sections of $D$ on   $Q_f$ then 
\begin{equation}
\label{Eq:Lambda coefficients-mod}
\Lambda(Z_b,X_\alpha,X_\beta)=\langle [Z_b,X_\alpha],X_\beta\rangle,
\end{equation}
for $1\leq b \leq \ell$, $1\leq \alpha , \beta \leq r$.
\end{enumerate}
\end{lemma}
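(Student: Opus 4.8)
The plan is to construct $\Lambda$ on a tubular neighbourhood using an adapted basis, show the defining formula \eqref{Eq:Lambda coefficients-mod} is consistent (i.e. the right-hand side is alternating in the appropriate sense), and then verify that the local definitions patch together into a global object because the gauge momenta themselves are globally defined.

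First I would work locally. Fix $q\in Q_f$ and use Lemma~\ref{L:ExistBasis} to produce an adapted basis $\{X_\alpha\}=\{Z_b,Y_I\}$ of sections of $D$ on a $G$-invariant tubular neighbourhood $U$. Via the identification $D^*=W^\circ$ let $\{\mu^\alpha\}$ be the dual coframe. The naive attempt is to \emph{define} a 3-form by prescribing its coefficients $B_{\alpha\beta\gamma}=\Lambda(X_\alpha,X_\beta,X_\gamma)$; for this to define an honest alternating form we need the prescription to be totally skew in $\alpha,\beta,\gamma$. The content of property (ii) is that $\Lambda$ is forced only on triples that include at least one $Z_b$; on triples $Y_IY_JY_K$ (no gauge-momentum leg) we are free. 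So I would set
\begin{equation*}
B_{b\alpha\beta}:=\langle[Z_b,X_\alpha],X_\beta\rangle=C_{b\alpha\beta},\qquad B_{IJK}:=0,
\end{equation*}
extended by total antisymmetry, and check this is consistent. The key point making it consistent is Theorem~\ref{Th:skew-gauge-mom} (equivalently \eqref{E:GaugeSkew}): since each $Z_b$ generates a gauge momentum and is a section of $\mathcal{S}$, we have $C_{b\alpha\beta}=-C_{b\beta\alpha}$, so $B_{b\alpha\beta}$ is already skew in its last two indices. It remains to check skewness in the first two slots when both are gauge-momentum legs, i.e. that $C_{bc\gamma}=-C_{cb\gamma}$; but $C_{bc\gamma}=\langle[Z_b,Z_c],Z_\gamma\text{ or }Y_\gamma\rangle$ and $[Z_b,Z_c]=-[Z_c,Z_b]$, so this holds automatically. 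Hence the $B_{\alpha\beta\gamma}$ are well-defined alternating coefficients and $\Lambda:=\frac16 B_{\alpha\beta\gamma}\,\mu^\alpha\wedge\mu^\beta\wedge\mu^\gamma$ is a genuine 3-form on $U$ satisfying (i) by construction (it is built from the $\mu^\alpha$, which annihilate $W$) and satisfying (ii) on $U$ since the stated identity for a general $X_\alpha,X_\beta$ follows from multilinearity and the prescription on basis triples. $G$-invariance of $\Lambda$ on $U$ follows because the $Z_b$, $Y_I$ are equivariant and the metric is invariant, so the $C_{b\alpha\beta}$ are invariant functions and the $\mu^\alpha$ transform equivariantly.

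Next I would globalise. The subtlety is that \eqref{Eq:Lambda coefficients-mod} only pins down $\Lambda$ on the subbundle of $\wedge^3(D^*)$ spanned by triples containing a $Z_b$, and the complementary freedom (the $Y_IY_JY_K$ part) could a priori obstruct patching. I would argue that the prescription is in fact basis-independent on the relevant piece: the formula $\Lambda(Z_b,u,v)=\langle[Z_b,u],v\rangle$ for $u,v\in D$ refers only to the globally defined vector fields $Z_b$ (and any extension to a basis), not to the auxiliary $Y_I$; and the expression $\langle[Z_b,u],v\rangle$ is $C^\infty(Q_f)$-linear in $v$ but \emph{not} in $u$. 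However, its skew-symmetrization $\frac12(\langle[Z_b,u],v\rangle-\langle[Z_b,v],u\rangle)$ \emph{is} tensorial in $(u,v)$ — the non-tensorial derivative terms cancel exactly as in the computation proving Theorem~\ref{Th:skew-gauge-mom} — and by \eqref{E:GaugeSkew} this skew-symmetrization equals $\langle[Z_b,u],v\rangle$ itself. Thus the assignment $(Z_b,u,v)\mapsto\langle[Z_b,u],v\rangle$ defines a well-defined element of $\wedge^2(D^*)$ for each $b$, independent of any choices. To kill the remaining $Y_IY_JY_K$ ambiguity I would simply \emph{declare} $\Lambda$ to vanish on triples of vectors all lying in a fixed $G$-invariant complement to $\mathcal{S}$ within $D$ — or, more invariantly, define $\Lambda$ via the interior-product characterization $\iota_{Z_b}\Lambda=$ the global 2-form above, together with the requirement that $\Lambda$ pulled back to this complement is zero. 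Since $\mathcal{S}$ is regular on $Q_f$ and $G$-invariant, such a complement exists globally on $Q_f$ (e.g. the metric-orthogonal complement of $\mathcal{S}$ inside $D$), and the two requirements together determine $\Lambda$ uniquely and globally. One then checks the local formula in an adapted basis recovers \eqref{Eq:Lambda coefficients-mod}.

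The main obstacle is precisely the globalisation step: showing that the natural local prescription does not depend on the non-canonical choice of the $Y_I$, and specifying the $\wedge^3\mathcal{S}^\circ$-part of $\Lambda$ in a coherent, $G$-invariant way. This is where the hypothesis that $\mathcal{S}$ is regular on $Q_f$ is used, and this is the technical core that (as the paper indicates) is deferred to the Appendix. The part that is genuinely easy, once Theorem~\ref{Th:skew-gauge-mom} is in hand, is the local consistency of the antisymmetrisation; everything reduces to the cancellation of Leibniz-rule terms that already appeared in that theorem's proof.
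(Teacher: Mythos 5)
Your local construction agrees with the paper's (the appendix defines $\Lambda$ on an invariant open set by exactly the coefficients $B_{b\beta\gamma}=\langle[Z_b,X_\beta],X_\gamma\rangle$ with the $B_{IJK}$ free, and the consistency of the antisymmetrisation via Theorem~\ref{Th:skew-gauge-mom} and the antisymmetry of the Lie bracket is the same). The gap is in your globalisation step. You claim that the skew-symmetrisation of $(u,v)\mapsto\langle[Z_b,u],v\rangle$ is tensorial in $(u,v)$ because ``the non-tensorial derivative terms cancel.'' They do not: replacing $u$ by $fu$ gives
\begin{equation*}
\tfrac12\bigl(\langle[Z_b,fu],v\rangle-\langle[Z_b,v],fu\rangle\bigr)
= f\cdot\tfrac12\bigl(\langle[Z_b,u],v\rangle-\langle[Z_b,v],u\rangle\bigr)+\tfrac12\,Z_b(f)\,\langle u,v\rangle ,
\end{equation*}
and the anomalous term $\tfrac12 Z_b(f)\langle u,v\rangle$ survives (the second slot is already tensorial, so there is nothing for it to cancel against). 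Hence $\iota_{Z_b}\Lambda$ is \emph{not} a well-defined section of $\wedge^2(D^*)$ determined by the formula alone, and your ``invariant characterisation'' of $\Lambda$ is not actually well posed. What is true --- and what the paper's Appendix proves (Lemma~\ref{lemma:B-expand} and Proposition~\ref{prop:transformation of Lambda}) --- is that the anomaly vanishes for the \emph{restricted} class of frame changes between adapted bases: the transition functions $M_\beta^\alpha$ between equivariant frames are $G$-invariant, hence annihilated by the orbit-tangent $Z_b$ (killing the $Z_b(f)$ term above), and the block mixing the gauge generators among themselves must have \emph{constant} entries (killing the non-tensoriality in the $Z_b$-slot). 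So the forced part of $\Lambda$ is ``tensorial along the groupoid of adapted frames,'' not tensorial outright; this weaker statement is what must be, and is, proved.

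Once that is established, your idea of fixing the residual $\wedge^3$ freedom by declaring $\Lambda$ to vanish on a global $G$-invariant complement of the span of the gauge generators inside $D$ (note: the complement of $\mathcal{S}_0=\mathrm{span}\{Z_b\}$, not of $\mathcal{S}$, which may be strictly larger) does work and would replace the paper's partition-of-unity/\v{C}ech argument: two orthogonal-adapted frames differ by a transition with $M_I^b=0$, so setting $B_{IJK}=0$ in both gives the same form on overlaps. So the architecture of your proof is sound and slightly more economical than the paper's at the patching stage, but the justification of the key well-definedness claim must be replaced by the adapted-frame transformation computation.
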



\begin{remarkths}
\label{Rmk:ExistBasis}
\begin{enumerate}

\item It follows from Theorem \ref{Th:skew-gauge-mom} that condition (ii) in the above Lemma is consistent with the antisymmetry relation
$\Lambda(Z_b,X_\alpha,X_\beta)=-\Lambda(Z_b,X_\beta,X_\alpha)$. On the other hand, in general,  $\Lambda(X_\alpha,X_\beta, Z_b)$ does not equal $\left<[X_\alpha,X_\beta],\, Z_b\right>$.

\item Conditions (i) and (ii) of the lemma impose no restriction on the values of  $\Lambda( Y_I,Y_J,Y_K)$ which leads to the  possible non-uniqueness of $\Lambda$. However,
if $r-\ell<3$,  given that  the indices $I,J,K$ run from $1$ to $r-\ell$, then $\Lambda( Y_I,Y_J,Y_K)=0$ and 
formula \eqref{Eq:Lambda coefficients-mod} completely characterises $\Lambda$. Therefore, in this case  Lemma \ref{L:Global3form} defines a unique 3-form on $Q_f$.

\item Equation \eqref{Eq:Lambda coefficients-mod} bears a similarity with the definition of the Cartan 3-form on Lie groups. However the vector fields involved are only equivariant and not necessarily tangent to the group orbit.

\item 
It is worth pointing out that while the construction of $\Lambda$ depends explicitly on the kinetic energy (metric), it does not depend on the potential part of the Lagrangian provided it is $G$-invariant. This $G$-invariance is required because the vector fields $Z_b$ must annihilate the potential (see Theorem \ref{Th:isometries on D}). 
\end{enumerate}
\end{remarkths}

The proof of Lemma \ref{L:Global3form} can be found in the Appendix (Proposition \ref{prop:global consistency}).  Note that the local expression
for a 3-form $\Lambda$ satisfying the conditions in the lemma  may be given
in terms of an adapted basis of sections $\{X_\alpha\}= \{Z_b, Y_I\}$ of $D$  by 
\begin{equation}
\label{E:Casimir3form}
\Lambda=\tfrac{1}{6}B_{\alpha \beta \gamma} \, \mu^\alpha \wedge \mu^\beta \wedge \mu^\gamma,
\end{equation}
where the coefficients $B_{\alpha \beta \gamma}$ are $G$-invariant functions, alternating in the indices, that satisfy
\begin{equation}\label{Eq:Lambda coefficients}
B_{b\beta\gamma} = \left<[Z_b,X_\beta],\, X_\gamma\right>.
\end{equation}
As usual, in \eqref{E:Casimir3form}, $\{\mu^\alpha\}$ is a basis of sections of $D^*=W^\circ$ that is dual to $\{X_\alpha\}$. Namely, they
are locally defined 1-forms on $Q$ that annihilate $W$ and satisfy $\mu^\alpha(X_\beta)=\delta^\alpha_\beta$. 
The $G$-invariance of  $W$   guarantees that $\mu^\alpha$ are also  $G$-invariant.

\subsection{Almost Poisson brackets having gauge momenta as Casimirs} \label{SS:FinalCasimirs}
We assume for the remainder of this section that among the 3-forms of Lemma \ref{L:Global3form}, there exists at least   one 
that admits a smooth extension to the points of  $Q$ having non-trivial isotropy. We point out that  this property is satisfied in 
 all of the examples that we considered. Denote the resulting  3-form on $Q$ by $\Lambda$.

We now use this 3-form $\Lambda$ to apply 
the construction outlined in Section \ref{S:GaugeT} to construct a bracket $\Pi_{nh}^\Lambda$ for our
nonholonomic system to obtain our main result.

\begin{theorem}
\label{th:main}
The bivector $\Pi_{nh}^\Lambda$ on $D^*$ is $G$-invariant and the gauge momenta $\pi_b$, $b=1,\dots, \ell$, are Casimir
functions of the induced bracket on the reduced space $D^*/G$.  
\end{theorem}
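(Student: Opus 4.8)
The plan is to work in the local coordinates $(q^i,\pi_\alpha)$ associated to an adapted basis $\{X_\alpha\}=\{Z_b,Y_I\}$ furnished by Lemma~\ref{L:ExistBasis}, and to exploit the explicit coordinate formula \eqref{E:PiBCoords} for $\Pi_{nh}^\Lambda$. First I would observe that $G$-invariance of $\Pi_{nh}^\Lambda$ is essentially automatic: $\Pi_{nh}$ is $G$-invariant (as recalled in Section~\ref{S:GaugeReduction}), the 3-form $\Lambda$ is $G$-invariant by construction (Lemma~\ref{L:Global3form}), hence the semi-basic 2-form $\Xi$ on $D^*$ obtained by contracting $\tau^*_{D^*}\Lambda$ with the $G$-invariant second-order field $X_{nh}$ is $G$-invariant, and therefore so is the endomorphism $\mathrm{Id}+\Pi_{nh}^\sharp\circ\Xi^\flat$, its inverse, and the composition defining $(\Pi_{nh}^\Lambda)^\sharp$ in \eqref{E:DefPiB}. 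Consequently the bracket descends to $D^*/G$, exactly as in the unmodified case, and it remains to show that each $\pi_b$ Poisson-commutes with every $G$-invariant function on $D^*$.

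The core of the argument is the computation of $\{\pi_b,f\}^\Lambda_{nh}$ for $f\in C^\infty(D^*)^G$. From \eqref{E:PiBCoords}, the Hamiltonian vector field of $\pi_b$ with respect to $\Pi_{nh}^\Lambda$ is
\begin{equation*}
X^\Lambda_{\pi_b} = -\rho^i_b\,\partial_{q^i} + (B^\gamma_{b\beta}-C^\gamma_{b\beta})\pi_\gamma\,\partial_{\pi_\beta}.
\end{equation*}
Now the key point is the defining property \eqref{Eq:Lambda coefficients-mod}/\eqref{Eq:Lambda coefficients} of $\Lambda$: it gives $B_{b\beta\gamma}=\langle[Z_b,X_\beta],X_\gamma\rangle$, while by \eqref{E:defCalbega} $C_{b\beta\gamma}=\langle[X_b,X_\beta],X_\gamma\rangle=\langle[Z_b,X_\beta],X_\gamma\rangle$ (since $X_b=Z_b$ in an adapted basis). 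Hence $B_{b\beta\gamma}=C_{b\beta\gamma}$, so lowering/raising with $\mathcal G$ gives $B^\gamma_{b\beta}=C^\gamma_{b\beta}$ and the $\partial_{\pi_\beta}$-component of $X^\Lambda_{\pi_b}$ vanishes identically. Therefore $X^\Lambda_{\pi_b}=-\rho^i_b\,\partial_{q^i} = -Z_b$, regarded as a vector field on $Q$ lifted trivially (no $\pi$-component) to $D^*$. In other words, $\{\pi_b,f\}^\Lambda_{nh} = -Z_b[f]$ as a derivation acting only through the configuration variables; more precisely it is the complete/tangent lift of $Z_b$ acting on functions pulled back appropriately. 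Since $Z_b$ is a section of $\mathcal S = D\cap(\mathfrak g\cdot q)$, it is tangent to the group orbits, so $Z_b$ is (pointwise) a combination of infinitesimal generators of the $G$-action; acting by this vector field on any $G$-invariant function gives zero. Hence $\{\pi_b,f\}^\Lambda_{nh}=0$ for all $f\in C^\infty(D^*)^G$, which is precisely the statement that $\pi_b$ is a Casimir of the reduced bracket.

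The main obstacle I anticipate is making the step ``$X^\Lambda_{\pi_b}$ annihilates $G$-invariant functions'' fully rigorous, because $X^\Lambda_{\pi_b}=-\rho^i_b\partial_{q^i}$ is a vector field on $D^*$ with no $\partial_{\pi_\alpha}$-component, and one must check that such a field does genuinely annihilate every $G$-invariant function on $D^*$ — not merely those pulled back from $Q$. The resolution is that $X^\Lambda_{\pi_b}$ is precisely the lift to $D^*$ of the configuration-space vector field $Z_b$ coming from the $G$-action (the cotangent/linearised lift along $D^*$), which is the infinitesimal generator on $D^*$ of the (local, position-dependent) one-parameter family built from $Z_b$; since $Z_b\in\mathcal S$ at each point it is a value of the infinitesimal $G$-action, the lifted field is tangent to the $G$-orbits in $D^*$, and $G$-invariant functions are constant along those orbits. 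A secondary technical point is that the adapted basis, and hence these coordinates, exist only locally on $Q_f$ and the formula \eqref{E:PiBCoords} is coordinate-dependent; but since being a Casimir is a local, pointwise condition and $Q_f$ is open dense with $\Lambda$ (by hypothesis) extending smoothly across the non-free locus, the conclusion propagates to all of $D^*/G$ by continuity. I would close by noting the compatibility remark: the vanishing $B^\gamma_{b\beta}-C^\gamma_{b\beta}=0$ is exactly what the skew-symmetry \eqref{E:GaugeSkew} from Theorem~\ref{Th:skew-gauge-mom} guarantees is consistent, which is why the 3-form $\Lambda$ was designed with property \eqref{Eq:Lambda coefficients-mod} in the first place.
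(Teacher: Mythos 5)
Your proposal is correct and follows essentially the same route as the paper: invariance of $\Pi_{nh}^\Lambda$ from invariance of $\Lambda$ and $\Pi_{nh}$, the computation $(\Pi_{nh}^\Lambda)^\sharp(d\pi_b)=-\rho_b^i\,\partial_{q^i}$ via $B_{b\beta}^{\gamma}=C_{b\beta}^{\gamma}$, tangency of this vector field to the $G$-orbits in $D^*$, and continuity to pass from $Q_f$ to all of $Q$. The one step to state more carefully is the tangency claim: at each point one chooses $\xi(q)\in\g$ with $\xi(q)_Q(q)=Z_b(q)$, and the infinitesimal generator of the lifted action of this fixed $\xi(q)$ on $D^*$ has no $\partial_{\pi_\alpha}$-component precisely because the coordinates $\pi_\alpha$ are $G$-invariant functions (the adapted basis being equivariant) --- this, rather than any ``lift of the vector field $Z_b$'' (whose genuine lift would in general carry $\pi$-components), is why $-\rho_b^i\,\partial_{q^i}$ coincides pointwise with an orbit-tangent vector and hence annihilates all invariant functions.
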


\begin{proof}
That $\Pi_{nh}^\Lambda$  is $G$-invariant follows from the invariance of the 3-form $\Lambda$ and of the bivector $\Pi_{nh}$.   For the proof that the gauge momenta are Casimir functions, we first prove this on $Q_f$ and then deduce the full statement by continuity.  

We may obtain a  local expression for $\Pi_{nh}^\Lambda$  on $Q_f$ using formula \eqref{E:PiBCoords}  in Theorem \ref{T:PropPiB}. Recall that $B_{\alpha \beta}^\gamma:=\mathcal{G}^{\gamma \delta} B_{\alpha \beta \delta}$, where  $B_{\alpha \beta \delta}=\Lambda(X_\alpha,X_\beta,X_\delta)$ as indicated in \eqref{eq:Bcoeff-general}. Using \eqref{Eq:Lambda coefficients}
and  \eqref{E:defCalbega} we find
\begin{equation*}
\begin{split}
B_{b\beta}^\gamma=C_{b\beta}^\gamma, \qquad B_{\beta b}^\gamma=C_{\beta b}^\gamma, \qquad
B_{IJ}^b=\mathcal{G}^{bc}C_{cIJ}+\mathcal{G}^{bK}B_{IJK}, \qquad B_{IJ}^K=\mathcal{G}^{Kb}C_{bIJ}+\mathcal{G}^{KL}B_{IJL}.
\end{split}
\end{equation*}
Therefore, the expression for $\Pi_{nh}^\Lambda$ becomes
\begin{equation}
\label{eq:Pi-Lambda-Casimir-General}
\Pi_{nh}^\Lambda=\rho^i_\alpha \partial_{q^i}\wedge \partial_{\pi_\alpha} +\frac 12  \left ( \left (\mathcal{G}^{bc}C_{cIJ}+\mathcal{G}^{bK}B_{IJK} - C_{IJ}^b \right )\pi_b + \left ( \mathcal{G}^{Kb}C_{bIJ}+\mathcal{G}^{KL}B_{IJL} -C_{IJ}^K \right ) \pi_K \right ) \, \partial_{\pi_I}\wedge \partial_{\pi_J} .
\end{equation}

A direct calculation using the above formulae for $\Pi_{nh}^\Lambda$ gives
\begin{equation*}
(\Pi_{nh}^\Lambda)^\sharp(d\pi_b)=- \rho_b^i\partial_{q^i}.
\end{equation*}
We claim that this vector field on $D^*$ is tangent to the group orbits of the lifted action of $G$ to $D^*$. Indeed, given that $Z_b= \rho_b^i\partial_{q^i}$ is
a section of $\mathcal{S}$,
for any $q\in Q_f$   there exists a Lie algebra element $\xi (q)\in \g$ such that
$Z_b(q)$ coincides with the infinitesimal generator  of $\xi (q)$  at $q$. Namely, 
\begin{equation*}
Z_b(q)= \left . \frac{d}{ds} \right |_{s=0} \exp(s\xi (q))\cdot q.
\end{equation*}
Now, since all vector fields in the  basis $\{X_\alpha\}= \{Z_b, Y_I\}$ are equivariant, the corresponding
momenta $\pi_\alpha$ are invariant functions. Therefore,  in coordinates $(q^i, \pi_\alpha)$, the local expression for the infinitesimal generator of $\xi (q)$ of the lifted action of $G$ to $D^*$ is
\begin{equation*}
 \left . \frac{d}{ds} \right |_{s=0} \exp(s\xi (q))\cdot (q,\pi)=\rho_b^i(q)\partial_{q^i}, 
\end{equation*}
which establishes the claim.  

It follows that the  Hamiltonian vector
field associated to $\pi_b$ via the induced bracket on the reduced space vanishes; i.e. $\pi_b$ is a Casimir of the reduced bracket, over all points of the open dense subset $Q_f$.  It then follows by continuity that $\pi_b$ (which is a globally defined invariant smooth function) is a Casimir everywhere. 
\end{proof}

\begin{remarkth}
We stress that the theorem assumes that among the 3-forms of Lemma \ref{L:Global3form}, there exists at least   one 
that admits a smooth extension to the points of  $Q$ having non-trivial isotropy, and that   this property is satisfied in 
all of the examples that we considered. However, we do not have a proof that such an extension always exists.  One may of course restrict the construction to $Q_f$, but the differential equation may then fail to be complete.
\end{remarkth}

 We conclude this section with some dynamical consequences of the above theorem. For free actions, ``directions" that annihilate the group orbits are spanned by the pull-backs to $D^*$ of 1-forms on $D^*/G$.  For more general actions, one replaces such forms by {\em basic} 1-forms, which are those $G$-invariant 1-forms $\beta$ on $D^*$ satisfying $\beta(u)=0$ for all $u$ tangent to the group orbit. Note that the differential of any invariant function is basic.

\begin{corollary}
\label{C:Rank2}
Suppose that the generic level sets of the gauge momenta on the reduced space $D^*/G$  are 2-dimensional. Then
\begin{enumerate}
\item the reduced bracket defined on $D^*/G$ induced by  $\Pi_{nh}^\Lambda$  satisfies the Jacobi identity and the system is Hamiltonizable,
\item the flow of the system on $D^*$  preserves the restriction of $(\Pi_{nh}^\Lambda)^\sharp$ to directions that annihilate the group orbits. More precisely,
\begin{equation*}
\left(\pounds_{X_{nh}}(\Pi_{nh}^\Lambda)\right)^\sharp ( \beta)=0
\end{equation*}
for every basic 1-form $\beta$ on $D^*$. In particular this holds when $\beta=d\pi_b$, for any gauge momentum $\pi_b$ and therefore, for each $b=1,\dots,\ell$,
$$[X_{nh},\, X_{\pi_b}]=0,$$
where $X_{\pi_b}=(\Pi_{nh}^\Lambda)^\sharp (d\pi_b)$.
\end{enumerate}
\end{corollary}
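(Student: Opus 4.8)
The plan is to deduce part (i) from Theorem~\ref{th:main} together with the fact that the characteristic distribution of $\Pi_{nh}^\Lambda$ has constant rank $2r$ (Theorem~\ref{T:PropPiB}(i)), and part (ii) from a Lie-derivative computation that exploits the fact that $X_{nh}$ preserves $\Pi_{nh}^\Lambda$ ``modulo'' the kernel directions.

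For part (i), the reduced bracket on $D^*/G$ has, generically, rank equal to $2r$ minus the number of independent Casimirs coming from the $G$-reduction. By Theorem~\ref{th:main} the $\ell$ gauge momenta $\pi_b$ are Casimirs of the reduced bracket; by hypothesis their common level sets in $D^*/G$ are $2$-dimensional, so the generic symplectic leaves of the reduced bracket are at most $2$-dimensional, hence the rank is generically $\le 2$. Since any rank-$2$ almost Poisson bivector automatically satisfies the Jacobi identity (the Jacobiator, being a trivector, must vanish on a manifold whose characteristic distribution has rank $2$, because it is a section of $\wedge^3$ of a rank-$2$ bundle), the reduced bracket is genuinely Poisson on the open dense set where the rank is exactly $2$, and then by continuity and the fact that the reduced Hamiltonian vector field is tangent to the leaves, the reduced equations are Hamiltonian with respect to a rank-$\le 2$ Poisson structure, i.e. the system is Hamiltonizable. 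I would state this carefully, noting that strictly one works on the principal stratum and invokes density.

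For part (ii), the key identity is that $X_{nh}=(\Pi_{nh}^\Lambda)^\sharp(dH_c)$ generates a flow under which $\Pi_{nh}^\Lambda$ need not be invariant, but whose failure of invariance is controlled. The cleanest route: let $\beta$ be any basic $1$-form on $D^*$. Then $X_\beta:=(\Pi_{nh}^\Lambda)^\sharp(\beta)$ is tangent to the group orbits — this is exactly what was shown inside the proof of Theorem~\ref{th:main} for $\beta=d\pi_b$, and the same argument (writing $\beta=\beta_\alpha\mu^\alpha$-type expansion, using that basic forms annihilate orbit directions which in $(q,\pi)$ coordinates are spanned by the $\rho_b^i\partial_{q^i}$) gives it in general; in fact since $(\Pi_{nh}^\Lambda)^\sharp$ has the block form $\begin{pmatrix}0&\rho\\-\rho^T&\mathcal B-\mathcal C\end{pmatrix}$, a basic $1$-form, whose $dq^i$-components lie along $\bar\rho^b_i$ only through invariant functions and whose pairing with $\rho_b^i\partial_{q^i}$ vanishes, is sent into the orbit directions. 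Now on the reduced space the induced bracket $\{\cdot,\cdot\}_{nh}^\Lambda$ is honestly invariant, and since $X_{nh}$ projects to the reduced Hamiltonian vector field, one has for any invariant functions $f,g$ the standard derivation property $X_{nh}[\{f,g\}] = \{X_{nh}[f],g\} + \{f,X_{nh}[g]\}$ holding on $D^*/G$ because the \emph{reduced} bracket is Poisson by part (i). Translating this back: $\left(\pounds_{X_{nh}}\Pi_{nh}^\Lambda\right)^\sharp(\beta)$ is a vector field tangent to the group orbits whose projection to $D^*/G$ vanishes, hence it vanishes (on $Q_f$, then by continuity everywhere). Specialising to $\beta=d\pi_b$ gives $\pounds_{X_{nh}}X_{\pi_b} = [X_{nh},X_{\pi_b}] = \left(\pounds_{X_{nh}}\Pi_{nh}^\Lambda\right)^\sharp(d\pi_b) + (\Pi_{nh}^\Lambda)^\sharp(d(X_{nh}[\pi_b]))$; the first term is zero by what we just showed, and the second is zero because $\pi_b$ is a first integral so $X_{nh}[\pi_b]=0$.

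The main obstacle I anticipate is making the argument for $\left(\pounds_{X_{nh}}\Pi_{nh}^\Lambda\right)^\sharp(\beta)=0$ fully rigorous on the singular strata and verifying that $(\Pi_{nh}^\Lambda)^\sharp(\beta)$ is orbit-tangent for \emph{all} basic $\beta$ rather than just the $d\pi_b$: the proof of Theorem~\ref{th:main} only did the latter, and one must check that no new components appear. The safe strategy is to do everything on the free, dense stratum $Q_f$, where orbit directions are exactly the span of the infinitesimal generators which (since the $X_\alpha$ are equivariant) appear in coordinates purely as combinations $\rho_b^i\partial_{q^i}$, use that the reduced bracket is Poisson there (part (i)) so that the Leibniz/derivation identity for the reduced dynamics is available, and then invoke density and smoothness of all objects involved to extend the vanishing of the (globally defined, smooth) expressions to all of $D^*$.
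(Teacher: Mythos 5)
Your route for both parts is the one the paper takes: for (i), use Theorem~\ref{th:main} to confine the reduced bracket to the $2$-dimensional common level sets of the gauge momenta; for (ii), push down to $D^*/G$ along basic forms, invoke (i) plus the fact that Hamiltonian flows of genuine Poisson structures preserve the bracket, and pull back. Two steps, however, do not hold up as written.

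In (i), the parenthetical justification ``any rank-$2$ almost Poisson bivector automatically satisfies the Jacobi identity'' is false: on $\R^3$ the bivector $\partial_x\wedge(\partial_y+x\,\partial_z)$ has rank $2$ everywhere and its Jacobiator is nonzero. The Jacobiator is \emph{not} in general a section of $\wedge^3$ of the characteristic distribution, because $[X_f,X_g]$ need not lie in that distribution when it is non-integrable. What rescues the argument --- and what you should make explicit, since you already have the ingredient --- is the existence of the Casimirs: from $\{\cdot,\pi_b\}=0$ one checks directly that the Jacobiator is annihilated by each $d\pi_b$, so it is a section of $\wedge^3$ of the rank-$2$ bundle tangent to the level sets and hence vanishes. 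Equivalently (the paper's phrasing), the bivector restricts to each $2$-dimensional level set, and every bivector on a $2$-manifold is Poisson. The conclusion is correct; the stated reason is not.

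In (ii), the reduction argument (in substance identical to the paper's: $\beta=\tau^*\alpha$ on the free stratum, $\pounds$ commutes with $\tau_*$ for projectable objects, and the reduced flow preserves the reduced Poisson structure by (i)) delivers exactly $(\pounds_{X_{nh}}\Pi_{nh}^\Lambda)(\beta,\beta')=0$ for all pairs of basic forms, i.e.\ that $(\pounds_{X_{nh}}\Pi_{nh}^\Lambda)^\sharp(\beta)$ is tangent to the group orbits. Your closing step --- ``a vector field tangent to the group orbits whose projection to $D^*/G$ vanishes, hence it vanishes'' --- is circular, since every orbit-tangent field projects to zero. This is precisely the obstacle you flagged, and it is genuine: passing to the quotient gives no control on the pairing of $\pounds_{X_{nh}}\Pi_{nh}^\Lambda$ with non-basic covectors. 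In the Chaplygin sphere, $X_{\pi_1}=(\Pi_{nh}^\Lambda)^\sharp(d\pi_1)=\pm\,\partial_\varphi$ and $[X_{nh},\partial_\varphi]=\dot y\,\partial_x-\dot x\,\partial_y$, which is tangent to the $SE(2)$-orbits but not zero; so only the weaker statement about the restriction to directions annihilating the orbits actually follows from this method. (The paper's own proof identifies $(\pounds_{X_{nh}}\Pi_{nh}^\Lambda)^\sharp(\tau^*\alpha)$ with its projected counterpart in the same way, so you have reproduced its argument faithfully; but the identification is exactly the point at issue.) Finally, your auxiliary claim that $(\Pi_{nh}^\Lambda)^\sharp(\beta)$ is orbit-tangent for \emph{every} basic $\beta$ is false and, fortunately, not needed: $d\pi_I$ is basic, yet $(\Pi_{nh}^\Lambda)^\sharp(d\pi_I)$ has a component along $Y_I$, transverse to the orbits; orbit-tangency holds only for the gauge momenta $d\pi_b$, as established in Theorem~\ref{th:main}.
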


\begin{proof}
(i) The proof of this is simply that on any 2-dimensional manifold, every almost-Poisson structure is in fact Poisson (as the Schouten-Nijenhuis bracket of the structure with itself, which is an alternating 3-tensor, must vanish).

(ii) The proof proceeds stratum by stratum: the flow of the vector field $X_{nh}$ preserves the strata because it is equivariant.  On the stratum where the action is free, any basic 1-form $\beta$ is the pull-back $\beta=\tau^*\alpha$ for some 1-form $\alpha$ on $(D^*/G)_f$, where $\tau:D^*\to D^*/G$ is the projection. Then,
$$\pounds_{X_{nh}}(\Pi_{nh}^\Lambda)^\sharp ( \beta) = \pounds_{X_{nh}}(\Pi_{nh}^\Lambda)^\sharp ( \tau^*\alpha) =  \left(\tau_*\left(\pounds_{X_{nh}}(\Pi_{nh}^\Lambda)\right)\right)^\sharp ( \alpha)
$$
and by the natural properties of Lie derivatives, this is
$$\left(\pounds_{\tau_*X_{nh}}(\tau_*\Pi_{nh}^\Lambda)\right)^\sharp ( \alpha)$$
and this vanishes by part (i), and the fact that Hamiltonian flows are Poisson.

The argument for other strata proceeds in the same way, since the restriction of $\tau$ to an orbit-type stratum is a submersion \cite{Duistermaat-Kolk}. 
\end{proof}

\subsection{Final observations}
\label{SS:FinalObservations}

\subsubsection{The case $r-\ell<3$} If the difference between the rank of the distribution $r$ and the number of independent gauge momenta $\ell$ is less than 3, then
as indicated in  Remark~\ref{Rmk:ExistBasis}(ii), the 3-form $\Lambda$  of Lemma  \ref{L:Global3form} is uniquely determined (given the choice of $W$) and, therefore, the same is true for the bracket $\Pi_{nh}^\Lambda$ appearing in 
Theorem~\ref{th:main}. 
In this case, since the indices $I,J,K$ run from $1$ to $r-\ell$, all coefficients  $B_{IJK}=0$ by the alternating property, and \eqref{eq:Pi-Lambda-Casimir-General} simplifies to 
\begin{equation}
\label{E:PinhB-simple}
\Pi_{nh}^\Lambda=\rho^i_\alpha \partial_{q^i}\wedge \partial_{\pi_\alpha} +\frac 12  \left (\mathcal{G}^{\gamma b}C_{bIJ} - C_{IJ}^\gamma \right )\pi_\gamma  \, \partial_{\pi_I}\wedge \partial_{\pi_J} .
\end{equation}
This formula gives an expression for the bracket whose invariant gauge momenta pass to the reduced space as Casimir functions  for  a number of classical nonholonomic problems, such
as the nonholonomic particle, the Chaplygin sphere, the problem of solids of revolution that roll without slipping on a horizontal plane or on a sphere, 
a homogeneous ball that rolls without slipping on a 
surface of revolution, and also the class of rigid bodies subject to generalised rolling constraints introduced in \cite{BalseiroGN}. 

In order to use the above formula in examples one needs to obtain an expression for an adapted basis of sections to compute  the coefficients $\rho^i_\alpha, \, C_{\alpha \beta}^\gamma, \, 
\mathcal{G}^{\alpha \beta}$. In sections \ref{S:ChSphere} and \ref{SS:SolidRev} we illustrate how this is done for the Chaplygin  sphere and for a solid of revolution that rolls without slipping on the plane. For these examples we also compute the 3-form $\Lambda$, both of which are given by interesting expressions involving the  Cartan 3-form on $SO(3)$.

\subsubsection{Why do we need to modify the brackets?} \label{sec:why gauge?} 
We now address the following fundamental question: why does one need to modify the nonholonomic bracket $\Pi_{nh}$
by $\Lambda$  to guarantee that the invariant gauge momenta
are Casimir functions of the reduced bracket?

  First we note that the question is only relevant  if the rank $r$ of the constraint distribution 
$D$ is greater than 2. Indeed, if $r\leq 2$ then the section $\Lambda$ of $\wedge^3(D^*)$ of Lemma  \ref{L:Global3form} vanishes and the bracket  
 $\Pi_{nh}^\Lambda$ in Theorem \ref{th:main}
coincides with $\Pi_{nh}$. An instance of this general situation is  encountered  by Balseiro  \cite{Balseiro2016}  in her treatment of the nonholonomic particle.

Consider then the case where $r>2$,  and assume for simplicity that there is only one (invariant) gauge momentum  whose unique generator taking values in $D$ is the 
equivariant vector field $Z$.
A fundamental property of $Z$, that  contrasts with the  
situation encountered in holonomic systems, is that even though $Z$ is tangent to the 
group orbits, it may not coincide with the infinitesimal generator  $\xi_Q$ of a constant Lie algebra element $\xi \in \g$ \cite{Bates}. It is this property of $Z$, that is not encountered in holonomic mechanics,
 that leads to the need of the modification of the nonholonomic 
bracket. Indeed, if $Z$ were equal to $\xi_Q$ for a fixed   $\xi \in \g$, and $\{Z,Y_I\}$ is an adapted basis of sections of $D$, then $[Z,Y_I]=0$ by equivariance of $Y_I$. Therefore
the 3-form $\Lambda=0$ satisfies the conditions of Lemma  \ref{L:Global3form} and the corresponding bracket $\Pi_{nh}^\Lambda$ in Theorem \ref{th:main}
coincides with $\Pi_{nh}$.

Examples of nonholonomic systems where $Z=\xi_Q$  for a fixed $\xi \in \g$ are rare and one usually must perform the modification of $\Pi_{nh}$ developed above to guarantee that the invariant gauge momenta pass to the quotient space as Casimir functions. An (somewhat artificial) example  where this condition holds and no modification of the nonholonomic bracket
$\Pi_{nh}$ is needed to accomplish the aforementioned goal, is
the ``rank 1" case of a rigid body subject to generalised rolling constraints treated in \cite{BalseiroGN}.

\subsubsection{Future work} As mentioned in the introduction, it would be interesting to extend the results of our paper to the case where the gauge momenta of a nonholonomic
system are not $G$-invariant. In this case it may be possible to find modifications of the nonholonomic bracket to guarantee that the $G$-invariant (nonlinear) 
functions of the gauge momenta pass to the quotient space as Casimir functions. This seems to be possible to accomplish for some examples,  and in particular for  the nonholonomic hinge \cite{Biz}.

\section{The  Chaplygin sphere}
\label{S:ChSphere}

This problem was considered by Chaplygin in \cite{chapsphere} and concerns the motion of an inhomogeneous sphere, whose center of mass coincides with its geometric centre, which is the rolling without slipping on 
a fixed plane. The vertical component of the angular momentum of the sphere is a gauge momentum for this problem. 
Using our method it is possible to construct an almost Poisson  bracket for the system that upon reduction has this angular momentum as Casimir. This bracket was
first found by Borisov and Mamaev \cite{BorMamChap}.

We assume that two of the moments of inertia of the sphere coincide to simplify the algebra and better illustrate how our construction works but a similar approach works in the general case.

The configuration space for the problem is $Q=SO(3)\times \R^2$. The attitude matrix $\mathcal{R}\in SO(3)$ specifies the orientation of the sphere by 
relating a body frame centred at the centre of the sphere, with a space frame whose third axis is perpendicular to the fixed plane. 
Let  $(x,y)\in \R^2$ be the spatial coordinates of the contact point of the sphere with the plane.

We will use Euler angles as local
coordinates for 
$SO(3)$. In accordance with  the {\em $x$-convention}, see e.g. \cite{MaRa}, we write a matrix $\mathcal{R}\in SO(3)$ 
as 
\begin{equation}
\label{E:MatR}
\mathcal{R}=\left(
\begin{array}{ccc}
 \cos \psi \cos \varphi - \cos \theta \sin \varphi \sin \psi & -\sin \psi \cos \varphi - \cos \theta \sin \varphi \cos \psi & \sin \theta \sin \varphi \\
\cos \psi \sin \varphi + \cos \theta \cos \varphi  \sin \psi  & -\sin \psi \sin \varphi + \cos \theta \cos \varphi  \cos \psi  & -\sin \theta \cos \varphi  \\
 \sin \theta \sin \psi   & \sin \theta \cos \psi  & \cos \theta  
\end{array}
\right),
\end{equation}
where the Euler angles $0<\varphi , \psi <2\pi, \, 0<\theta <\pi$. According to this convention, we
obtain the following expressions for the angular velocity in space coordinates $ {\omega}$,
and in body coordinates ${\Omega}$ (see e.g. \cite{MaRa}):
\begin{equation}
\label{E:Omega-omega}
 \omega=\left (\begin{array}{c} \dot \theta \cos \varphi +\dot \psi \sin \varphi \sin \theta \\ \dot \theta \sin \varphi - \dot \psi \cos \varphi \sin \theta  \\ \dot 
\varphi + \dot \psi \cos \theta \end{array} \right ), \qquad  \Omega=\left ( \begin{array}{c} \dot \theta \cos \psi +\dot \varphi \sin \psi \sin \theta \\ -\dot \theta \sin \psi + \dot \varphi \cos \psi \sin \theta  \\ \dot 
\varphi\cos \theta + \dot \psi  \end{array} \right ).
\end{equation}
 The constraints of rolling without slipping are
\begin{equation}
\label{E:Constraints-rolling-Chap-top}
\dot x= R\omega_2 =R (\dot \theta \sin \varphi - \dot \psi \cos \varphi \sin \theta ), \qquad \dot y = -R\omega_1
=-R(\dot \theta \cos \varphi +\dot \psi \sin \varphi \sin \theta),
\end{equation}
where $R$ is the radius of the sphere.

 Assuming that the third axis of the body frame is  the axis of symmetry of the sphere,  the
 Lagrangian is 
\begin{equation*}
L=\frac{1}{2}\left (I_1(\dot \theta \cos \psi +\dot \varphi \sin \psi \sin \theta)^2 + I_1(-\dot \theta \sin \psi + \dot \varphi \cos \psi \sin \theta)^2 +I_3(\dot \varphi\cos \theta + \dot \psi)^2 +m(\dot x^2+\dot y^2)\right )-V(\theta, \psi)
\end{equation*}
where $I_1,I_1,I_3$ are the principal moments of inertia and $m$ is the total mass of the sphere. Here $V$ is a potential energy chosen
to be invariant under the symmetry group action defined below. Expanding the above expression, one finds that the kinetic energy metric 
is 
\begin{equation*}
\mathcal{G}=(I_1\sin^2\theta+I_3\cos^2\theta)\, d\varphi^2+I_1\, d\theta^2+I_3\, d\psi^2 +m \, dx^2 + m\, dy^2 +2 I_3\cos \theta \, d\varphi \, d\psi.
\end{equation*}

The symmetry group is $G=SE(2)$. The action of $(\vartheta, a,b)\in SE(2)$ on $Q$ is free and proper and is given in the above coordinates
by
\begin{equation*}
(\vartheta, a,b)\,:\,(\varphi,\theta,\psi,x,y)\mapsto (\varphi+\vartheta,\theta,\psi,x \cos \vartheta -y \sin \vartheta+a ,
y \cos \vartheta +x \sin \vartheta +b).
\end{equation*}

One checks that both the constraints and the Lagrangian are invariant under the lift of this action to $TQ$.
The constraint distribution $D$ has rank 3 and is spanned by the $G$-equivariant vector fields
\begin{equation}
\label{E:VFChap}
Z_1=\partial_\varphi, \qquad Y_2=\partial_\theta +R\sin \varphi \,\partial_x -R\cos \varphi \,\partial_y, \qquad 
Y_3=\partial_\psi -R\cos \varphi\sin \theta\, \partial_x -R\sin \varphi\sin \theta\,\partial_y.
\end{equation}
We note that the basis of sections $\{X_\alpha \}=\{Z_1, Y_2, Y_3\}$ is adapted in the sense of Definition \ref{E:DefAdaptedBasis}. Indeed, its elements
are equivariant vector fields and $Z_1$ generates a gauge momentum. This can be seen by noticing that $Z_1$ is tangent to the
group orbits and that its  tangent lift 
in bundle coordinates is $Z_1^{TQ}=\partial_\varphi$. Hence $Z_1^{TQ}[L]=0$ and therefore $\dot p_{Z_1}=0$ by \eqref{E:pZdot}.

The non-zero coefficients $C_{\alpha \beta \gamma}$ (with $\alpha<\beta$) are computed to be
\begin{equation}
\label{E:CdownChap}
C_{123}=-mR^2 \sin \theta, \qquad C_{132} =mR^2 \sin \theta, \qquad C_{233}=mR^2 \sin \theta \cos \theta.
\end{equation}
All other terms may be determined by the skew-symmetry $C_{\alpha \beta \gamma} = -C_{ \beta \alpha \gamma}$. Note that $C_{123}=-C_{132}$ which serves as a double check that $p_{Z_1}$ is 
a gauge momentum in view of Theorem \ref{Th:skew-gauge-mom}.

We select the dual 1-forms $\{ \mu^\alpha \} = \{ d\varphi , d\theta , d\psi \}$ which amounts to identifying 
$D^*=(\mathrm{span}\{\partial_x , \partial_y\})^\circ =T^*SO(3)\times \R^2$.
This is allowed in our construction since $W=\mathrm{span}\{\partial_x , \partial_y\}\subset TQ$ is invariant under the $SE(2)$ action.
The formula  \eqref{E:Casimir3form} defines a  unique 3-form $\Lambda$ since the rank of $D$ is $r=3$  and we have  $l=1$ gauge momentum, and so $r-l<3$.
Such unique 3-form  is  given by
\begin{eqnarray*}
\Lambda=-mR^2 \sin \theta \, d\varphi \wedge d\theta \wedge d\psi.
\end{eqnarray*}

A coordinate independent expression  for $\Lambda$ may be given in terms of the unique left-invariant 1-forms $\lambda^1, \lambda^2, \lambda^3$ on $SO(3)$
that at the group identity are dual to the canonical basis of $\mathfrak{so}(3)\cong \R^3$. These 1-forms have local expressions:
\begin{equation}
\label{E:Def1formlambda}
\lambda^1=\sin \psi \sin \theta \, d\varphi+ \cos \psi  \, d\theta, \quad 
\lambda^2=\cos \psi \sin \theta \, d\varphi- \sin \psi \, d\theta, \quad \lambda^3=\cos \theta \, d\varphi +d\psi.
\end{equation}
So we can write
\begin{equation*}
\Lambda=mR^2 \,\lambda^1\wedge \lambda^2 \wedge \lambda^3.
\end{equation*}
Therefore, up to the constant factor of $mR^2$,  $\Lambda$ equals  the Cartan bi-invariant volume form on $SO(3)$ normalised to have volume one on the unit cube of $\mathfrak{so}(3)\cong \R^3$. This also holds for the general Chaplygin sphere with arbitrary moments of inertia as had been indicated in \cite{LGN10}.

An expression for the bracket $\{\cdot ,\cdot \}$ defined by $\Pi_{nh}^\Lambda$, whose reduction has $\pi_1$ as a Casimir, and describes the nonholonomic dynamics,
 can  be obtained using \eqref{E:PinhB-simple}. 
All the brackets between the coordinate functions $\varphi, \theta, \psi, x, y$ are zero. The non-zero brackets between the coordinates and the momenta $\pi_\alpha$ are
obtained using \eqref{E:VFChap}:
\begin{equation*}
\begin{split}
\{\varphi, \pi_1\}=1, \qquad \{\theta, \pi_2\}=1, \qquad \{x, \pi_2 \} =R\sin \varphi, \qquad \{y, \pi_2 \} =- R\cos \varphi, \\
 \{\psi, \pi_3\}=1, \qquad \{x, \pi_3 \} =-R\cos \varphi \sin \theta, \qquad \{y, \pi_3 \} =- R\sin \varphi \sin \theta. 
\end{split}
\end{equation*}
Also, we have 
\begin{equation*}
\{\pi_1, \pi_2\}=0, \qquad \{\pi_1, \pi_3\}=0,
\end{equation*}
that must hold since $\pi_2$ and $\pi_3$ are invariant functions and $\pi_1$ is a Casimir of the reduced bracket.
To compute $ \{\pi_2, \pi_3\}$ one needs  to compute the inverse of the $3\times 3$ symmetric matrix $\mathcal{G}$ with entries $\mathcal{G}_{\alpha \beta}=\langle X_\alpha, X_\beta\rangle$.  One gets
\begin{equation}
\label{E:GinvChap}
\begin{split}
\mathcal{G}^{-1}=\frac{1}{K(\theta)\sin^2\theta }\begin{pmatrix}  I_3+mr^2\sin^2\theta & 0 & -I_3\cos \theta \\ 0 & \frac{K(\theta)\sin^2\theta}{ I_1+mr^2} & 0 
\\  -I_3\cos \theta & 0 &  I_1\sin^2\theta+I_3\cos^2\theta   \end{pmatrix}
\end{split}
\end{equation}
where 
\begin{equation*}
K(\theta)= I_1 mR^2 \sin^2 \theta +I_3 mR^2 \cos^2 \theta +I_1I_3.
\end{equation*}
With the aid of these formulae and \eqref{E:CdownChap} one obtains
\begin{equation*}
\begin{split}
C_{23}^1=-\frac{mR^2 I_3  \cos^2 \theta}{K(\theta) \sin \theta}, \qquad C_{23}^2=0, \qquad C_{23}^3 = \frac{mR^2\cos \theta (I_1\sin^2\theta +I_3\cos^2 \theta)}{K(\theta)\sin \theta},
\end{split}
\end{equation*}
and hence, by \eqref{E:PinhB-simple} we get
\begin{equation*}
\{\pi_2,\pi_3\}=-\frac{mR^2(I_3+mR^2)\sin \theta}{K(\theta)}\pi_1 - \frac{mR^2(I_1-I_3)\cos\theta\sin\theta}{K(\theta)}\pi_3.
\end{equation*}

Let us now write the bracket in terms of more standard physical variables for the problem. First, we introduce the Poisson vector $\gamma:=R^Te_z$ that gives the  coordinates on 
the body frame
of the vector normal to the plane on which the rolling takes place. Its components are
 \begin{equation}
 \label{E:gamma-angles}
\gamma_1=\sin \theta \sin \psi, \qquad \gamma_2= \sin \theta \cos \psi, \qquad \gamma_3=\cos \theta.
\end{equation}
Next, the angular momentum vector about the contact point, expressed in the body frame is given by 
\begin{equation*}
M=\I \Omega + mR^2 \gamma\times (\Omega \times \gamma),
\end{equation*}
where $\I=diag(I_1,I_1,I_3)$ is the tensor of inertia, and $\times$ denotes the vector product in $\R^3$. To write $M$ in terms of 
our coordinates and the momenta $\pi_\alpha$ start by noticing the quasi-velocities $v^\alpha$ defined by the basis $\{Z_1, Y_2,Y_3\}$
given by \eqref{E:VFChap} satisfy
\begin{equation}
\label{E:Quasi-Chap}
\begin{split}
\dot \varphi=v^1, \qquad \dot \theta=v^2, \qquad \dot \psi =v^3.
\end{split}
\end{equation}
Next, write $v^\alpha=\mathcal{G}^{\alpha \beta}\pi_\beta$ using the expression for $\mathcal{G}^{-1}$ given above in \eqref{E:GinvChap}. Combining
this with the expression for $\Omega$ given in \eqref{E:Omega-omega} and the expression for $\gamma$ on \eqref{E:gamma-angles} one gets
\begin{equation*}
M_1=\frac{\sin \psi \pi_1 + \cos \psi \sin \theta \pi_2 - \sin \psi \cos \theta \pi_3}{\sin \theta}, \qquad M_2=\frac{\cos \psi \pi_1 - \sin \psi \sin \theta \pi_2 - \cos \psi \cos \theta \pi_3}{\sin \theta}, 
\end{equation*}
 and $M_3=\pi_3$.
Note that both  vectors $M$ and $\gamma$ are $SE(2)$-invariant and its components drop down to the quotient $D^*/SE(2)$.  In fact, as a manifold  $D^*/SE(2)=\R^3\times S^2$.
The entries of $M$ serve as coordinates on the $\R^3$ factor while
 the components of $\gamma$ are redundant coordinates on $S^2$. In particular notice that $\pi_1$ equals the vertical component of the
 angular momentum vector; i.e.  $\pi_1=( M , \gamma )$, where $(\cdot  ,\cdot )$ is the euclidean scalar product in $\R^3$
 
By direct calculation using the above formulae for the bracket one gets
\begin{equation}
\label{E:brackChap}
\{\gamma_i, \gamma_j\}=0,  \qquad \{M_i, \gamma_j \}=-\epsilon_{ijk}\gamma_k, \qquad \{M_i , M_j\}=-\epsilon_{ijk}(M_k -mR^2( \Omega , \gamma ) \gamma_k),
\end{equation}
where the alternating tensor $\epsilon_{ijk}$ equals $0$ if two of its indices are equal, it equals $1$ if $(i,j,k)$ is a cyclic permutation of $(1,2,3)$ and it equals $-1$ otherwise.
The term $( \Omega , \gamma)$ is the spinning speed  of the ball about the vertical axis. It can be written   in terms of $M$ and $\gamma$ using the 
expression
\begin{equation*}
\Omega = AM+ mR^2 \frac{(AM, \gamma)}{1-mR^2(A\gamma, \gamma)}A\gamma,
\end{equation*}
where $A=(\I + mR^2)^{-1}$.
(For the calculations it is useful to note that $K(\theta)=(I_1+mR^2)(I_3+mR^2)(1-( A\gamma ,  \gamma ))$).

The above formulae determine the reduced bracket in the quotient space $D^*/SE(2)$. The reduced equations of motion (see e.g. \cite{FedKoz})
\begin{equation*}
\dot M= M\times \Omega, \qquad \dot \gamma = \gamma \times \Omega,
\end{equation*}
are Hamiltonian with respect to $H=\frac{1}{2}(M, \Omega)+ V(\gamma)$.

The bracket \eqref{E:brackChap} has rank 4 and, even though it does not satisfy the Jacobi identity, its characteristic
distribution is integrable -  the leaves of the corresponding foliation are the level sets of the Casimir function $( M , \gamma )$.
As was first  noticed in \cite{BorMamChap}, the bracket obtained by multiplication by the conformal factor $\sqrt{1-mR^2( A\gamma ,  \gamma ) }$  does satisfy the Jacobi identity and Hamiltonizes the problem. This kind of multiplication is commonly interpreted as a {\em time reparametrisation}. We mention that the conformal factor
is intimately related with the preserved measure of the problem \cite{FedKoz}.

\section{A solid of revolution rolling without slipping on a fixed plane} \label{SS:SolidRev}

We consider a convex body of revolution with a smooth surface that rolls without slipping on a fixed plane.
This problem was originally considered by Routh \cite{Routh} in the case of a spherical body, and by Chaplygin \cite{Ch_r} and Appel \cite{Appell} in the general case; see  \cite{BorMam}
for historical details. Our treatment and notation is close to the one used in  \cite{BorMam}.

 As for the Chaplygin sphere, the configuration space is  $Q=SO(3)\times \R^2$.  We 
 denote by  $u=(x,y,z)$ 
  the  coordinates of the centre of mass $O$ of the body with respect to an inertial frame. Our choice of inertial frame is such that the fixed plane where the rolling takes place  corresponds to $z=0$.
  On the other hand, the body frame is chosen to be centred at the centre of mass, and having third axis $E_3$ along the symmetry axis of the body.
   
   Denote by $\mathcal{R}\in SO(3)$ 
 the attitude matrix that relates the two frames.  The constraints of rolling without slipping are  given by
 \begin{equation}
 \label{E:RollingCBodyREv}
\dot u=\dot {\mathcal{R}}\rho,
\end{equation}
where $\rho$ is the vector from contact point $P$ to the centre of mass of the body $O$  written in the body frame (see
Figure \ref{F:planarsection}).
The last component in the above equation is in fact the holonomic constraint
\begin{equation}
\label{E:zBodyRev}
z=\langle \rho, \gamma \rangle,
\end{equation}
where, just like in the previous section, $\gamma = \mathcal{R}^Te_z$ is the vector normal to the fixed plane written in body coordinates. We assume that the orientation of $e_z$
is such that  that $\gamma$ is the inward normal vector of the body at $P$. Hence, the inverse of the classical Gauss map from differential geometry
of surfaces, allows us to express  $\rho$  as a function of $\gamma$ in the form
\begin{equation*}
\rho_1=f_1(\gamma_3)\gamma_1, \qquad \rho_2=f_1(\gamma_3)\gamma_2, \qquad \rho_3=f_2(\gamma_3).
\end{equation*}

By writing  $\gamma_3=\cos \theta$ in accordance with the Euler angles introduced in Section \ref{S:ChSphere}, then we may write $f_1, \, f_2$ as functions of $\theta$.
Their geometric
meaning can be read off from Figure \ref{F:planarsection} that depicts the curve that generates the surface of revolution in the perspective of the body frame. Notice that
 $\pi-\theta$ is the angle
between the $E_3$ axis and the outer normal vector to the surface and 
 $a_1(\theta):=f_1(\theta)\sin \theta$ is the distance between $P$ and the $E_3$ axis. The figure also illustrates the height $z$ of the centre of mass.
Note that $(z,\theta - \pi/2)$ are polar coordinates with respect to $E_1$ for the {\em pedal curve} of the generating curve about $O$.
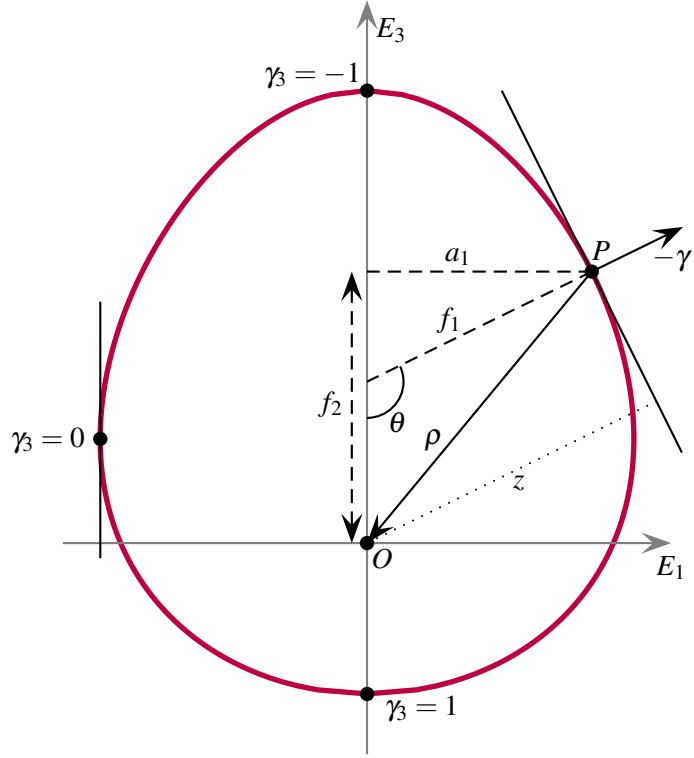
\begin{figure}[h]
\centering
\psset{unit=4,arrowsize=8pt,dotscale=1.5}
{\begin{pspicture}(-1,-1.2)(1,1.3)
\rput{-90}{
\psplot[plotpoints=150,linewidth=2pt,linecolor=purple]{-1}{1}{3 x add 1 x 2 exp sub mul sqrt 2 div}
\psplot[plotpoints=150,linewidth=2pt,linecolor=purple]{-1}{1}{3 x add 1 x 2 exp sub mul sqrt -2 div}
\psline[linecolor=Grey]{->}(1.2,0)(-1.3,0) 
\psline[linecolor=Grey]{->}(0.5,-1)(0.5,1) 
\psdots(1,0)(-1,0)
\psdot(-0.4, 0.739)
\psline(-1.0, 0.443)(0.2, 1.0353)
\psdot[dotstyle=*,dotscale=1.5](0.5,0)
\psline[arrowsize=8pt]{<-}(0.5, 0)(-0.4, 0.7389)
\psline[linestyle=dashed](-0.4, 0.739)(-0.4, 0)
\psline{->}(-0.4, 0.739)(-.548, 1.039)
\psline[linestyle=dashed](-0.4, 0.739)(-0.035, 0)
\psarc(-0.035, 0){0.12}{0}{115} 
\psline[linestyle=dashed]{<->}(0.5,-0.05)(-0.4,-0.05)
\psline[linestyle=dotted](0.5, 0)(0.03, 0.9513)
\psdot(.1547, -0.877)
\psline(0.55, -0.877)(-.3, -0.877)
}
\rput(0.08,1.2){$E_3$}  \rput(1,-0.58){$E_1$}
\rput(-0.18,1.05){$\gamma_3=-1$} \rput(0.18,-1.05){$\gamma_3=1$}
\rput(0.05,-0.55){$O$}  \rput(0.77,0.47){$P$}
\rput(0.3,0.45){$a_1$}   \rput(0.27,0.24){$f_1$} \rput(0.1,-0.1){$\theta$}
\rput(1,0.44){$-\gamma$} \rput(0.5,-0.3){$z$}
\rput(0.22,-0.16){$\rho$}
\rput(-0.12,-0.04){$f_2$}
\rput(-1.05,-0.16){$\gamma_3=0$} 
\end{pspicture}
}

\caption{The generator of a solid of revolution; see text for details}
\label{F:planarsection}
\end{figure}

Recall that the principal lines of curvature of a surface of revolution are the parallels  (the circles $\theta =ct$) and the meridians (perpendicular to the
parallels and having shape  equal to the rotating curve). All of the meridians meet at the {\em poles} where $\gamma_3=\pm 1$ ($\theta =0,\pi$). 
It is clear from Figure \ref{F:planarsection} that $f_1, f_2$ can be extended as even $2\pi$-periodic functions of $\theta$ that, consequently,
have local extrema at the poles. The radii of curvature along the parallels $R_p$ and the meridians $R_m$ are given by
\begin{equation*}
R_p =f_1, \qquad R_m = -\frac{1}{\sin \theta} \frac{d f_2}{d\theta} =\frac{d f_2}{d\gamma_3} .
\end{equation*}
Note that $R_p$ and $R_m$ are smooth, and positive and their values coincide at the poles. As functions of $\theta$, they are $2\pi$-periodic and even.

Now, using \eqref{E:gamma-angles} and \eqref{E:zBodyRev} we have $z(\theta)=\sin^2 \theta f_1(\theta) +\cos \theta f_2(\theta)$. In view of the identity
\begin{equation}
\sin \theta \frac{da_1}{d\theta} + \cos \theta \frac{df_2}{d\theta} = 0,
\end{equation}
that follows from the definition of $\theta$ in Figure \ref{F:planarsection}, we have
\begin{equation*}
a_2(\theta):=\frac{dz}{d\theta}=\sin \theta (\cos \theta f_1(\theta) -f_2(\theta)),
\end{equation*}
and we can write $\dot z = a_2(\theta)\dot \theta$ (which is consistent with \eqref{E:RollingCBodyREv}). The kinetic energy of the system is therefore
 \begin{equation*}
\mathcal{K}=\frac{1}{2}\langle \I\Omega ,\Omega \rangle + \frac{m}{2}(\dot x^2 +\dot y^2
+a_2(\theta)^2 \dot \theta^2).
\end{equation*}
The assumption that the body is axisymmetric implies that the inertia tensor
has the form $\I= \mathrm{diag}(I_1,I_1,I_3)$.
 In terms of Euler angles (using \eqref{E:Omega-omega}) we get the expression for the   Lagrangian
 \begin{equation}
 \label{E:LagSolidRev}
\begin{split}
L&=\frac{1}{2}\left ( (I_1\sin^2\theta +I_3\cos^2\theta)\dot \varphi^2 +(I_1+ma_2(\theta)^2)\dot \theta^2+I_3\dot \psi^2
+2I_3\cos \theta \dot \varphi\dot \psi  + m(\dot x^2+\dot y^2)\right )-V(\theta),
\end{split}
\end{equation}
 where  $m$ is the total mass of the body and the potential
 $V$ is an even function of $\theta$ that is
  invariant under the symmetries  of the system  that are  discussed ahead. If  the potential is gravitational then $V(\theta) = mgz(\theta)$.
 The constraints \eqref{E:RollingCBodyREv}  of rolling without slipping are expressed in coordinates as
\begin{equation*}
\begin{split}
\dot x = -a_2(\theta) \cos \varphi \, \dot \varphi  +z(\theta) \sin \varphi \, \dot \theta -  a_1(\theta) \cos \varphi\, \dot \psi \, ,
\qquad
 \dot y=- a_2(\theta) \sin \varphi \, \dot \varphi  - z(\theta) \cos \varphi \, \dot \theta -  a_1(\theta) \sin \varphi\, \dot \psi .
\end{split}
\end{equation*}
 
The symmetry group is $G=SE(2)\times S^1$ corresponding to translations and rotations on the rolling plane, and to the 
internal rotational symmetry of the body. This action is not free. The configurations for which the point of contact $P$ is one of the poles
have $S^1$ isotropy.  These configurations lie outside the Euler angle chart that has $0<\theta< \pi$. 
All points in our chart have trivial isotropy, and 
the action of $((\vartheta, a,b),\Psi)\in SE(2)\times S^1$ on $Q$ is represented 
by
\begin{equation*}
((\vartheta, a,b),\Psi)\,:\,(\varphi,\theta,\psi,x,y)\mapsto (\varphi+\vartheta,\theta,\psi+\Psi,x \cos \varphi -y \sin \varphi +a ,
y \cos \varphi +x \sin \varphi +b).
\end{equation*}
One checks that both the constraints and the Lagrangian are invariant under the lift of this action to $TQ$.

The constraint distribution has rank 3. Along the  points covered by our chart, it is spanned by the $G$-equivariant vector fields
\begin{equation*}
\begin{split}
W_1&=\partial_\varphi- a_2(\theta)( \cos \varphi \, \partial_x + \sin \varphi \, \partial_y), \\
  W_2&=\partial_\psi - a_1(\theta)( \cos \varphi \, \partial_x + \sin \varphi \, \partial_y), \\ 
Y_3&=\partial_\theta +z(\theta)(\sin \varphi \, \partial_x - \cos \varphi \, \partial_y).
\end{split}
\end{equation*}
Although the  vector fields $W_1$ and $W_2$ span the space $\mathcal{S}_q=D_q\cap \g\cdot q$, they generally do not generate gauge momenta
and hence $\{W_1, W_2, Y_3\}$ is not an adapted basis in the sense of Definition \ref{E:DefAdaptedBasis} (the vector fields $W_1,W_2$ should not be confused with the subbundle $W$ complementary to $D$). A generator $Z_1$ of a gauge momentum
may be found using 
 the ansatz 
\begin{equation}
\label{E:Z1}
Z_1=g(\theta) W_1+ k(\theta)W_2
\end{equation}
and using  Theorem \ref{Th:skew-gauge-mom}
to determine the functions $g, k$.  According to the theorem, if $Z_1$ generates a gauge momentum then  
the functions $g, k$ should be such that 
\begin{equation}
\label{E:GaugeMomConds}
\begin{split}
\langle [Z_1, Y_2],Z_1\rangle&=\langle [Z_1,Y_2],Y_2\rangle=\langle [Z_1, Y_3],Y_3\rangle  =0, \\ \langle [Z_1, Y_3],Z_1\rangle &=0, \qquad  \langle [Z_1, Y_2],Y_3\rangle =-\langle [Z_1, Y_3],Y_2\rangle ,
\end{split}
\end{equation}
where  $\langle \cdot , \cdot \rangle$ is the kinetic energy Riemannian metric defined by the Lagrangian \eqref{E:LagSolidRev}, and $Y_2$ is any $G$-equivariant
 linear combination of $W_1$, $W_2$  (i.e. with  coefficients that are functions of $\theta$).

Independently of the choice made for $Y_2$,
a short direct calculation shows that the first three identities in \eqref{E:GaugeMomConds} hold for any functions $g, k$. On the other hand,
after a long calculation, it is seen that the other two are satisfied by any solution of the following system of linear ODE's:
\begin{equation}
\label{E:ODEBodyRev}
\frac{d}{d\theta}\left ( \begin{array}{c} g(\theta) \\ k(\theta) \end{array} \right )=L(\theta)\left ( \begin{array}{c} g(\theta) \\ k(\theta) \end{array} \right ),
\end{equation}
where the  $2\times 2$ matrix $L(\theta)=\frac{1}{K(\theta)} \tilde L(\theta)$ 
with
\begin{equation*}
K(\theta) := I_1I_3+ mI_1a_1(\theta)^2 + mI_3 f_2(\theta)^2,
\end{equation*}
and where the entries of $\tilde L$ are given by
\begin{equation}
\label{E:ExpM}
\begin{split}
\tilde L_{11}(\theta)&= mI_3f_2(\theta) \left ( \frac{R_m(\theta)-R_p(\theta)}{\sin \theta} \right )
 -ma_2(\theta) f_1(\theta)( I_3+mz(\theta)f_1(\theta)), \\
\tilde L_{12}(\theta)&= mI_3f_2(\theta)\cos \theta \left ( \frac{R_m(\theta)-R_p(\theta)}{\sin \theta} \right ) -m f_1(\theta)a_1(\theta) 
( I_3+mz(\theta)f_1(\theta)) , \\
\tilde L_{21}(\theta) &= mf_1(\theta)(I_1\sin^2\theta+I_3\cos^2\theta) \left ( \frac{R_p(\theta)-R_m(\theta)}{\sin \theta} \right ) 
  \\ & \qquad   +\frac{m a_2(\theta)}{ \sin^2\theta} \left ( ma_1(\theta) a_2(\theta) z(\theta) 
  +(R_m(\theta) -R_p(\theta))I_3 \cos \theta +(I_3-I_1) a_1(\theta) \sin \theta \cos \theta \right ),
  \\ \tilde L_{22}(\theta) &=m\cos \theta (I_1 a_1(\theta) \sin \theta +I_3f_2(\theta) \cos \theta) \left ( \frac{R_p(\theta)-R_m(\theta)}{\sin \theta} \right )
  \\ & \qquad   +mf_1(\theta)^2 (mz(\theta)a_2(\theta) + (I_3-I_1)\sin \theta \cos \theta).
\end{split}
\end{equation}
We point out that the function $K$ defined above is always positive and may be written as $K=I_1I_3 +m (\I \rho,  \rho)$ where, as in Section \ref{S:ChSphere}, $ (\cdot ,  \cdot)$ denotes
the standard scalar product in $\R^3$. As explained in \cite{BorMam}, the function $K$ is related to the density of an invariant measure for the system.

\begin{proposition}
\label{P:Existgk}
There exist two independent gauge momenta of the system on $D^*$.
\end{proposition}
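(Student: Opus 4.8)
The plan is to exhibit the two gauge momenta explicitly by showing that the linear system of ODEs \eqref{E:ODEBodyRev} for the pair $(g,k)$ has a two-dimensional space of solutions, and that these solutions extend to equivariant vector fields defined on all of $Q$ (not just on the Euler-angle chart). The point is that by Theorem~\ref{Th:skew-gauge-mom}, once $Z_1 = g(\theta)W_1 + k(\theta)W_2$ satisfies the skew-symmetry relations \eqref{E:GaugeMomConds}, it generates a gauge momentum; and those relations have been reduced precisely to the linear system \eqref{E:ODEBodyRev}.

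First I would invoke the standard existence and uniqueness theorem for linear systems of ODEs: since the matrix $L(\theta) = \tfrac{1}{K(\theta)}\tilde L(\theta)$ has entries that are smooth functions of $\theta$ on the open interval $(0,\pi)$ — because $K(\theta)$ is everywhere positive and the entries \eqref{E:ExpM} of $\tilde L$ are smooth there (each apparent singularity at $\theta = 0, \pi$ coming from a factor $1/\sin\theta$ or $1/\sin^2\theta$ is what must be checked) — the solution space of \eqref{E:ODEBodyRev} on $(0,\pi)$ is a two-dimensional vector space. Pick a basis $(g_1,k_1)$, $(g_2,k_2)$ and set $Z_1 = g_1 W_1 + k_1 W_2$, $Z_1' = g_2 W_1 + k_2 W_2$. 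These are linearly independent as sections of $\mathcal{S}$ wherever $(g_1,k_1)$ and $(g_2,k_2)$ are linearly independent as vectors in $\R^2$, which by the Wronskian is either everywhere or nowhere, hence everywhere. Each generates a gauge momentum by Theorem~\ref{Th:skew-gauge-mom}, and each is $G$-equivariant since $W_1, W_2$ are equivariant and $g,k$ depend only on the invariant variable $\theta$. This gives two independent $G$-invariant gauge momenta of the system on $D^*$ over the chart.

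The main obstacle is that the Euler-angle chart misses the two polar configurations ($\theta = 0, \pi$, where $P$ is a pole and the isotropy is $S^1$), so strictly one must argue that the gauge momenta, a priori defined only on the chart, are restrictions of globally defined functions on $D^*$. I would handle this by examining the behaviour of the linear system \eqref{E:ODEBodyRev} near $\theta = 0$ and $\theta = \pi$: using that $f_1, f_2, a_1 = f_1\sin\theta, a_2, z$, $R_p$, $R_m$ are even $2\pi$-periodic in $\theta$ with $R_p = R_m$ at the poles and $a_1, a_2$ vanishing there, one checks that the apparent singularities in \eqref{E:ExpM} cancel (for instance $(R_m - R_p)/\sin\theta$ extends smoothly, being essentially a derivative, and the $a_2/\sin^2\theta$ terms are controlled because $a_2 = \sin\theta(\cos\theta f_1 - f_2)$). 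Thus $L(\theta)$ — or at least the relevant combination governing the physically meaningful momenta — extends across the poles, so the solutions $(g,k)$ extend and the resulting momenta are smooth on all of $D^*$. Since $Q_f$ (the complement of the polar fibres) is open and dense, and the two functions so constructed are smooth, $G$-invariant, linear in the momenta, and conserved on the dense open set, they are conserved everywhere; this establishes the proposition. The verification of smoothness at the poles, which is the genuinely technical point, can either be carried out directly from \eqref{E:ExpM} or deferred by noting, as the authors do elsewhere, that in all the examples considered the relevant extension exists.
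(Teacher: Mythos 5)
Your argument over the Euler-angle chart is essentially the paper's: the skew-symmetry conditions of Theorem~\ref{Th:skew-gauge-mom} reduce to the linear system \eqref{E:ODEBodyRev}, whose two-dimensional solution space yields two independent equivariant generators and hence two gauge momenta on the chart. The gap is in the extension across the poles. You argue that the entries of $L(\theta)$ extend smoothly to $\theta=0,\pi$ and conclude that "the solutions $(g,k)$ extend and the resulting momenta are smooth on all of $D^*$." That inference does not follow. A function of $\theta$ defines a smooth ($S^1\times S^1$-invariant) function on $Q$ near a polar configuration only if it is a smooth function of $\gamma_3=\cos\theta$, i.e.\ only if it extends to a smooth \emph{even} ($2\pi$-periodic) function of $\theta$. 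Smoothness of $L$ at $\theta=0$ guarantees that a solution continues smoothly to negative $\theta$, but a generic solution of a linear system with smooth coefficients is not even, and a non-even $g(\theta)$ does not descend to a well-defined smooth coefficient on $Q$ at the pole; the generator $Z_1=gW_1+kW_2$ (whose ingredients $\partial_\varphi,\partial_\psi$ are themselves singular there) would then fail to extend, and the momentum would not be a globally defined first integral.

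The missing idea, which is the actual content of the paper's proof, is a parity argument: one checks from \eqref{E:ExpM} that $L(\theta)$ is not merely smooth but \emph{odd} and $2\pi$-periodic (using that $f_1,f_2,z,R_p,R_m,K$ are even while $a_1,a_2$ are odd, and that $R_p-R_m$ vanishes to second order at the poles), and then invokes a Floquet-type uniqueness argument (Lemma~\ref{lemma:Floquet}): for an odd, $T$-periodic coefficient matrix, \emph{every} solution of $\dot{\xx}=L\xx$ is even and $T$-periodic, because $\xx(-t)$ solves the same initial value problem. It is this evenness and periodicity of all solutions --- not the smoothness of $L$ at the poles --- that lets one write $g,k$ as smooth functions of $\gamma_3$ and so extend the gauge momenta to all of $D^*$. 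Your final density argument (conserved on the dense set $Q_f$ implies conserved everywhere) is fine once global smoothness is in hand, but as written the proof stops one essential step short of establishing it.
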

\begin{proof}
The local existence of the two independent gauge momenta follows from the application of the existence theorem for ODE's to the system \eqref{E:ODEBodyRev}
to obtain two linearly independent solutions
(this can be done since the matrix $L(\theta)$ is smooth, see below).

In order to show that these integrals may be extended outside of our chart, we need to argue that  the generator $Z_1$ given by \eqref{E:Z1} admits an extension 
to all of $Q$.  This is certainly true if all the solutions to \eqref{E:ODEBodyRev} are
$2\pi$-periodic,  even functions of $\theta$.  To show that this is the case,
 we will prove that  the matrix $L(\theta)$ is smooth, odd, and $2\pi$-periodic (and therefore vanishes at $\theta=n\pi$, $n\in \mathbb{Z}$)
 and we will apply the following lemma.
  \begin{lemma}\label{lemma:Floquet}
Let $L(t)$ be an $n\times n$ matrix depending smoothly on $t$, $T$-periodic and odd (that is, $L(-t)=-L(t)$).
Then any solution to the differential equation $\dot \xx(t) = L(t)\xx(t)$ is even (i.e., $\xx(-t)=\xx(t)$) and $T$-periodic. 
\end{lemma}

\begin{proof}
Let $\xx_0\in\R^n$ and consider the initial value problem
$$\dot\xx(t) = L(t)\xx(t),\quad \xx(0)=\xx_0.$$
The evenness of $\xx(t)$ follows from uniqueness of solutions: if $\xx(t)$ were not even then it is easy to check that $\mathbf{y}(t):=\xx(-t)$ would be a different solution to the same initial value problem. 

Since $L$ is also $T$-periodic, it follows that 
$$L(T/2+t)=-L(-T/2-t) = -L(T/2-t)$$
and the argument above showing $\xx(t)$ is even also shows $\xx$ is ``even about $T/2$'': $\xx(T/2+t)=\xx(T/2-t)$. Consequently,
$$\xx(T/2+t)=\xx(T/2-t)=\xx(t-T/2)$$
whence $\xx$ is $T$-periodic.
\end{proof}

To show that   $L(\theta)$  has the required properties to apply the lemma, 
recall that $f_1$ and $f_2$ are even and $2\pi$-periodic. Consequently, the same is true about the functions
$z$, $R_p$, $R_m$ and $K$,
while $a_1$ and $a_2$ are odd and $2\pi$-periodic. Taking this into account, and in view of  \eqref{E:ExpM}, we conclude that $L$ is odd. Finally, the entries
of $L$ are seen to be smooth and vanish 
at $\theta=n\pi$ by using again \eqref{E:ExpM}. For this matter note that  these points correspond to the poles where, as a function of $\theta$, 
$R_p-R_m$ vanishes to  second order (since both $R_p$ and $R_m$ are even) and $a_1$ and $a_2$ also 
vanish.
 \end{proof}
 
 The differentials of the gauge momenta of the proposition above become dependent along the  points of  $D^*$ where the lifted $G$-action
 is not free. As will be explained below, these points correspond to a special kind of relative equilibrium.

The 3-form $\Lambda$ given by \eqref{E:Casimir3form}  that defines the desired gauge transformation can be computed without explicitly solving 
 \eqref{E:ODEBodyRev}. We will first find its expression on our chart and then give its expression on all of $Q$.
Let $(g(\theta), h(\theta))$,  be a  solution of \eqref{E:ODEBodyRev} with $h$ not identically zero. Then   $\{Z_1, Y_2, Y_3\}$ is an adapted basis 
at all points where $h(\theta)\neq 0$, where we have put $Y_2=W_1$. A simple calculation yields
\begin{equation*}
C_{123} = \langle [Z_1,Y_2],Y_3 \rangle =- mh(\theta)z(\theta)a_1(\theta).
\end{equation*}

We consider the dual basis $\{ \mu^\alpha \}$ of $\{Z_1, Y_2, Y_3\}$ given by
\begin{equation*}
\mu^1=\frac{1}{h(\theta)}\, d\psi, \qquad \mu^2=d\varphi - \frac{g(\theta)}{h(\theta)} \, d\psi, \qquad \mu^3= d\theta.
\end{equation*}
As for the Chaplygin sphere, this amounts to identifying $D^*=W^\circ$ where $W=\mathrm{span}\{\partial_x, \partial_y\}$. The  3-form $\Lambda$ defined
by \eqref{E:Casimir3form} is
unique since $r-\ell<3$ (item (ii) of Remark \ref{Rmk:ExistBasis}). It is given in our coordinates by
\begin{equation*}
\begin{split}
\Lambda=- mz(\theta) R_p(\theta) \sin \theta  \, d\varphi\wedge d\theta \wedge d\psi,
\end{split}
\end{equation*}
where we have written $a_1(\theta)= R_p(\theta) \sin \theta$. This 3-form may be written  in terms of the invariant 1-forms $\lambda^\alpha$ for
$SO(3)$ defined by \eqref{E:Def1formlambda} as
\begin{equation*}
\Lambda = mz(\gamma_3) R_p(\gamma_3)  \, \lambda^1 \wedge \lambda^2 \wedge \lambda^3.
\end{equation*}
Two facts about $\Lambda$ should be remarked at this point. Firstly, $\Lambda$
 admits a smooth extension to all of $Q$ including configurations where the action is not free. Secondly,
$\Lambda$ is independent of our choice of solution of the system  \eqref{E:ODEBodyRev}. Therefore, any gauge momentum of the system will be a Casimir function
of  the bracket that the corresponding bivector $\Pi_{nh}^\Lambda$ induces on the quotient space $D^*/G$.

The explicit expressions for the bracket  $\Pi_{nh}^\Lambda$ on $D^*$ may be obtained working with the basis of sections $\{W_1, W_2, Z_3\}$ and using the formulae in Section \ref{S:GaugeT}.
This is analogous to what was done in Section \ref{S:ChSphere}  for the Chaplygin ball. We do not give the details of this calculation. Instead we give  expressions for the brackets of the entries of $\gamma$ and of the angular momentum about the contact point
\begin{equation*}
M= \I\Omega + m\rho \times (\Omega \times \rho).
\end{equation*}
We have
\begin{equation}
\label{E:BrackSolidRev}
\begin{split}
& \{\gamma_i, \gamma_j\}=0,  \qquad \{M_i, \gamma_j \}=-\epsilon_{ijk}\gamma_k,  \\
 & \{M_i, M_j\} =  \epsilon_{ijk} \left ( - M_k +m R_m(\gamma_3) (\Omega, \gamma) \rho_k 
 +\frac{m(R_p(\gamma_3)-R_m(\gamma_3))z(\gamma_3)}{K(\gamma_3)} \left ( (M,\rho) \rho_k +T_k \right )\right ),
\end{split}
\end{equation}
where
\begin{equation*}
T_k=\frac{I_3(M_1\gamma_1+M_2\gamma_2)\gamma_k}{1-\gamma_3^2}, \; k=1,2, \qquad T_3=I_1M_3.
\end{equation*}
(Notice that the bracket has  no singularity at the poles $\gamma_3=\pm 1$ since $R_p-R_m$ vanishes there). In these expressions, we think that
$\Omega$ is written in terms of $M$ and $\gamma$ as
\begin{equation*}
\Omega=A M + m \frac{(AM,\rho)}{1-m(A\rho, \rho)}A\rho,
\end{equation*}
where $A:=(\I +m||\rho||^2)^{-1}$.

The reduction of the system by $G=SE(2)\times S^1$ can be performed in two steps since the individual actions of $SE(2)$ and $S^1$ commute.
Analogous to the case of the Chaplygin sphere,  the orbit space $D^*/SE(2)$ is smooth and  isomorphic to $S^2\times \R^3$, and points in this
space are labeled by the pair $\gamma, M$. The formulae \eqref{E:BrackSolidRev} can be interpreted as the
reduction of the bracket $\Pi_{nh}^\Lambda$ to  $D^*/SE(2)$. This partially reduced bracket does not satisfy the Jacobi identity except for very particular cases, like a
perfectly homogeneous sphere. In more general cases, like the so-called Routh sphere where the body is spherical but the centre of mass does not coincide with the
geometric centre, its characteristic distribution is not even integrable.

As indicated in \cite{BorMam}, the dynamics in this intermediately reduced space is given by 
\begin{equation*}
\dot M = M\times \Omega + m \dot \rho \times (\Omega \times \rho)+\frac{d V}{d \gamma_3} ( \gamma \times E_3), \qquad \dot \gamma = \gamma \times \Omega.
\end{equation*}
The  above equations are Hamiltonian with respect to the bracket \eqref{E:BrackSolidRev} and the Hamiltonian
$H(M,\gamma) = \frac{1}{2} (M,\Omega) + V(\gamma_3)$.

The ultimate reduction of the system is achieved by noticing that the action of $S^1$ on the orbit 
space $D^*/SE(2)$ is by simultaneous rotation on the planes $\gamma_1, \gamma_2$ and  $M_1, M_2$.
Note that the points having $\gamma_1=\gamma_2=0$ and $M_1=M_2=0$ are fixed by the action. The locus of these points corresponds to relative
equilibria where the body of revolution is steadily spinning about its axis of symmetry touching the plane at one of the poles. Along these points, the differentials of any two gauge momenta
of the system are dependent.

In order to see how the gauge momenta descend to  Casimir functions on the ultimately reduced space $D^*/G = (D^*/SE(2))/S^1$, we note that the
 linear functions on $D^*$  generated by  the vector fields $W_1$ and $W_2$ may be written as 
\begin{equation*}
p_{W_1} = (M,\gamma), \qquad p_{W_2}=M_3.
\end{equation*}
It follows that  the gauge momenta of the system are of the form
\begin{equation*}
C=g(\gamma_3)  (M,\gamma)+ k(\gamma_3) M_3,
\end{equation*}
where $g$ and $k$ are solutions to the system \eqref{E:ODEBodyRev}, expressed   as functions of $\gamma_3=\cos \theta$ (this is possible since the
solutions to this system are periodic and even functions of $\theta$ as shown in Proposition \ref{P:Existgk}). Using the differential equation satisfied by $g, k$ one can
 show that the Hamiltonian vector field of $C$ is
\begin{equation*}
X_C= k(\gamma_3) \left ( \gamma_2 \partial_{\gamma_1} - \gamma_1 \partial_{\gamma_2} + M_2 \partial_{M_1} -M_1 \partial_{M_2} \right ),
\end{equation*}
which is clearly vertical with respect to the action of $S^1$ defined above.

The ultimate reduced space $D^*/G=(D^*/SE(2))/S^1$ can be described by introducing 
generators of the ring of $S^1$-invariant polynomials on $D^*/SE(2)$. For example
\begin{equation*}
\begin{split}
\sigma_1&=\gamma_3, \qquad \sigma_2=\gamma_1M_2-\gamma_2M_1, \qquad \sigma_3=\gamma_1M_1
+\gamma_2M_2 \\
\sigma_4&=M_3, \qquad \sigma_5=M_1^2+M_2^2.
\end{split}
\end{equation*}
These functions identically satisfy
\begin{equation*}
\sigma_2^2+\sigma_3^2=\sigma_5 (1-\sigma_1^2),  \qquad \sigma_5 \geq 0.
\end{equation*}
The reduced space $D^*/G$ is then isomorphic to the four dimensional, semi-algebraic variety  $\mathcal{M}\subset \R^5$
defined by
\begin{equation*}
\mathcal{M}:=\left \{ \sigma \in \R^5 \, : \,   \sigma_2^2+\sigma_3^2=\sigma_5 (1-\sigma_1^2),  \quad \sigma_5 \geq 0 \right \}.
\end{equation*}
This space is not smooth having singularities along the two  lines
\begin{equation*}
L^{\pm}=\left \{  \sigma \in \R^5 \, : \, \sigma=\left ( \pm 1, 0 , 0 , \sigma_4, 0 \right )\, \right \}
\end{equation*}
that correspond to the  relative equilibria mentioned above. Each of these lines is a one dimensional stratum of $\mathcal{M}$.

By  $G$-invariance of  $\Pi_{nh}^\Lambda$, there is an induced  bracket $\{\cdot  ,\cdot \}_\mathcal{M}$ on $\mathcal{M}$ having 
\begin{equation*}
C_j(\sigma)=g_j(\sigma_1)\sigma_3 +(g_j(\sigma_1)\sigma_1+k_j(\sigma_1))\sigma_4, \qquad j=1,2,
\end{equation*}
as Casimir functions. Here $(g_j,k_j)$, $j=1,2$, are two linearly independent solutions of \eqref{E:ODEBodyRev} written as functions
of $\sigma_1=\gamma_3=\cos\theta$. These Casimir functions are   independent everywhere on $\mathcal{M}$ but their differentials are linearly dependent 
along the singular strata $L^{\pm}$. The Hamiltonian $H$ can be written in terms of $\sigma$ as
\begin{equation*}
H(\sigma)=\frac{1}{2}\left ( \frac{\sigma_5}{K_1(\sigma_1)}+\frac{\sigma_4^2}{K_3(\sigma_1)}  \right )  + \frac{m}{2} \frac{ \left ( \sigma_3 f_1(\sigma_1)K_3(\sigma_1) +\sigma_4 f_2(\sigma_1)K_1(\sigma_1) \right)^2}{K(\sigma_1)
K_1(\sigma_1)K_3(\sigma_1)}
\end{equation*}
where $K_j(\sigma_1):=I_j +m(1-\sigma_1^2)f_1(\sigma_1)^2+mf_2(\sigma_1)^2$, $j=1,3$. 
The ultimately reduced equations of motion can be formulated as
\begin{equation}
\label{E:Motion-Final-Reduced-space}
\dot \sigma_j = \{ \sigma_j , H\}_\mathcal{M}, \qquad j=1,\dots, 5.
\end{equation}
The crucial point of our construction is that the above equations are {\em true} Hamiltonian. Namely, the bracket $\{\cdot  ,\cdot \}_\mathcal{M}$ satisfies the
Jacobi identity. This follows immediately from Corollary \ref{C:Rank2}.

Equations (2.8) in  \cite{BorMam} give explicit expressions for the bracket $\chi(\sigma_1) \{\cdot  ,\cdot \}_\mathcal{M}$  (in terms of a different 
family of $S^1$-invariant functions on $D^*/SE(2)$), where the function $\chi(\sigma_1)=\sqrt{\frac{K_1(\sigma_1)}{1-\sigma_1^2}}$.
As was already pointed out in \cite{Ramos},  it is not necessary to introduce this
conformal factor in order to satisfy the Jacobi identity. There is no explanation in \cite{BorMam} about the origin of this bracket, and it is likely that it was found by the authors
using an ad hoc approach.

The restriction of the system to the 2-dimensional symplectic leaf determined as the level set of the Casimir functions, defines a one degree of freedom, and hence integrable, Hamiltonian
system. Apparently (see  \cite{BorMam}) the reduction of the  integration of the reduced system to a set of 2 linear ODE's was known to   Chaplygin.
More details about the explicit integration of the reduced system can 
 be found in  \cite{BorMam} or \cite{CDS-book}. 
We simply mention that the generic solutions are periodic.

We stress that our approach for the reduction of the system  follows the philosophy and treatment in  \cite{CDS-book} but with a fundamental difference: we are
performing the Poisson reduction of the system with respect to the bracket $\Pi_{nh}^\Lambda$ and {\em not} with respect to $\Pi_{nh}$. 
By following the reduction of $\Pi_{nh}$, the authors of \cite{CDS-book} arrive to an equation ((180) in their text) analogous to 
 \eqref{E:Motion-Final-Reduced-space} but with respect to a bracket of functions that does not satisfy the Jacobi identity.
The authors do seem to notice that for fixed values of the integrals $C_1, C_2$ one has a one-degree of freedom Hamiltonian system (section 6.3.7.4),
but they do not provide a link between this observation and their reduction.

\appendix
\section{Proof of Lemma~\ref{L:Global3form} }

The proof of Theorem~\ref{th:main} depends in a crucial way on the existence of the 3-form $\Lambda$ that satisfies the conditions 
of Lemma~\ref{L:Global3form}. Here we present the a proof  that such a $\Lambda$ always exists. As explained at several points of the
text, we will only prove that $\Lambda$ is well defined on the open dense subset $Q_f\subset Q$ where the $G$-action is free.

Recall  the assumption made at the beginning of Section~\ref{SS:3form}: the  chosen subbundle $W$ of $TQ$ with the property 
$TQ=D\oplus W$ is $G$-invariant.

\subsection{Local considerations}

Consider an adapted basis   $\{X_\alpha\}$  of sections of $D$. Recall that  this means that all 
vector fields $X_\alpha$ are equivariant sections of $D$ and that
 $X_b$ are gauge momentum generators  for $b=1,\dots,\ell$.  That is, $X_b$ is a section of the distribution $\mathcal{S}$ 
 on $Q$ defined point-wise as $\mathcal{S}_q = D_q \cap( \g\cdot q)$, and satisfies $\dot p_{X_b}=0$.
 In virtue of  Lemma~\ref{L:ExistBasis}, we may assume that our adapted basis of sections is defined on a
 $G$-invariant open subset $U\subset Q_f$.

  In the above paragraph, as in the remainder of this section, we do not distinguish $Z_b$ and $Y_J$ as in Definition \ref{E:DefAdaptedBasis}, as there would be too much notation---we rely on the indices to distinguish the type of generator. Recall that our convention is $b,c\dots\,\in\{1,\dots,\ell\}$ (corresponding to gauge momentum generators), $I,J,\dots\,\in\{\ell+1,\dots,r\}$ and $\alpha,\beta,\dots\,\in\{1,\dots,r\}$, where $r$ is the rank of $D$.

A 3-form $\Lambda$ satisfying conditions (i) and (ii) of Lemma~\ref{L:Global3form} may be defined  on $U$ by the formula
\begin{equation} 
\label{eq:appendix-3-form}
\Lambda = \tfrac16 B_{\alpha\beta\gamma}\,\mu^\alpha\wedge\mu^\beta\wedge\mu^\gamma,
\end{equation}
where  the coefficients $B_{\alpha\beta\gamma}$  are any $G$-invariant functions, alternating in the indices and satisfying
\begin{equation}\label{eq:appendix-B-coefficients}
B_{b\beta\gamma} = \left<[X_b,X_\beta],\,X_\gamma\right>.
\end{equation}
Here $B_{IJK}$ are arbitrary smooth $G$-invariant functions, alternating in the indices. Note that  
the skew-symmetry condition
$B_{b\beta\gamma}=-B_{b\gamma\beta}$ holds in virtue of Theorem~\ref{Th:skew-gauge-mom}.
 As usual,   $\{\mu^\alpha\}$ in \eqref{eq:appendix-3-form} is the dual basis of $\{X_\alpha\}$. Namely, they 
  are 1-forms defined on $U$ that annihilate $W$ and satisfy $\mu^\alpha(X_\beta)=
\delta_\beta^\alpha$. Moreover, they are  $G$-invariant by $G$-invariance of $W$, and hence $\Lambda$ is also $G$-invariant.

Now let $V$ be a possibly different $G$-invariant open subset of $Q_f$, and let $\{Y_\alpha\}$ be a (new) adapted basis of 
$D$ on $V$, with $Y_\alpha=M_\alpha^\beta X_\beta$ on the intersection $U\cap V$. In order to be gauge momentum generators, we require $Y_c = M_c^bX_b$, with $M_c^b\in\R$ (constants), and in order that the $Y_\beta$ are equivariant we require that all the coefficients $M_\beta^\alpha$ be $G$-invariant functions.  

(Here we are assuming the distribution is strongly nonholonomic on the configuration space, otherwise the $M_c^b$ are only annihilated by every vector field tangent to the distribution $D$; the proof in the more general case proceeds in the same way.)  

The dual basis $\{\mu^\alpha\}$ transforms into a new basis $\{ \nu^\beta \}$ dual to the $Y_\beta$ satisfying $\nu^\beta=\Mbar^\beta_\alpha\mu^\alpha$, where $\bar{M}$ is the inverse matrix of $M$; that is, $\Mbar_\alpha^\beta M_\beta^\gamma = \delta_\alpha^\gamma$ (Kronecker $\delta$). Let us emphasise that 
\begin{equation}\label{eq:M is upper triangular}
M_b^I=\bar M_b^I=0,\quad\forall b,I.
\end{equation}
We define 
$$\Lambda' = \tfrac16 B'_{\alpha\beta\gamma}\,\nu^\alpha\wedge\nu^\beta\wedge\nu^\gamma,
$$
where the $G$-invariant alternating  coefficients $B'_{\alpha \beta\gamma}$ are defined as for $\Lambda$. Namely, 
 $B'_{b\beta\gamma}$ is given by \eqref{eq:appendix-B-coefficients}, using the $Y_\alpha$ in place of the $X_\alpha$, and 
  $B'_{IJK}$ are arbitrary.

We wish to compare $\Lambda$ and $\Lambda'$ on the intersection $U\cap V$. 

\begin{lemma}\label{lemma:B-expand} The coefficients $B_{\alpha\beta\gamma}$ and $B'_{\alpha\beta\gamma}$ are related by
\begin{eqnarray}
B'_{b\beta\gamma} &=& M_b^\rho M_\beta^\sigma M_\gamma^\tau \,B_{\rho\sigma\tau}  
\label{eq:B'_a}\\ 
B'_{IJK} &=& M_I^\rho M_J^\sigma M_K^\tau \,B_{\rho\sigma\tau} +E_{IJK}, \label{eq:B'_I}
\end{eqnarray}
for some $G$-invariant functions $E_{IJK}$ on $Q$, alternating in the indices.
The forms themselves are related more simply by,
\begin{equation}
\label{eq:nu-mu}
\nu^\alpha\wedge\nu^\beta\wedge\nu^\gamma = \Mbar^\alpha_\delta\Mbar^\beta_\eps\Mbar^\gamma_\eta\, \mu^\delta\wedge\mu^\eps\wedge\mu^\eta.
\end{equation}
\end{lemma}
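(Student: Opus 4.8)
The plan is to dispatch the three identities in order of increasing content. Identity \eqref{eq:nu-mu} is pure multilinear algebra: one substitutes the transformation law $\nu^\alpha=\bar M^\alpha_\delta\,\mu^\delta$ for the dual coframes (recalled just before the statement) into the left-hand side and expands the wedge product by multilinearity. Nothing beyond relabelling indices is required.

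For \eqref{eq:B'_a} the crucial observation is that the two ``derivative'' terms in the Lie bracket $[Y_b,Y_\beta]$ vanish. Writing $Y_b=M_b^cX_c$ and $Y_\beta=M_\beta^\sigma X_\sigma$ and using the Leibniz rule for Lie brackets of vector fields, one finds $[Y_b,Y_\beta]=M_b^cM_\beta^\sigma[X_c,X_\sigma]+M_b^c(X_c[M_\beta^\sigma])X_\sigma-M_\beta^\sigma(X_\sigma[M_b^c])X_c$. The last term vanishes because the $M_b^c$ are constant --- or, in the general strongly nonholonomic case, annihilated by every section of $D$ --- and $X_\sigma$ is a section of $D$; the middle term vanishes because $X_c$ is a section of $\mathcal{S}$, hence pointwise tangent to the group orbits, while the function $M_\beta^\sigma$ is $G$-invariant. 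Therefore $[Y_b,Y_\beta]=M_b^cM_\beta^\sigma[X_c,X_\sigma]$, and pairing with $Y_\gamma=M_\gamma^\tau X_\tau$ and using \eqref{eq:appendix-B-coefficients} gives $B'_{b\beta\gamma}=M_b^cM_\beta^\sigma M_\gamma^\tau B_{c\sigma\tau}$. Since $M_b^I=0$ by \eqref{eq:M is upper triangular}, letting the contracted index $\rho$ range over the full set $\{1,\dots,r\}$ leaves the value unchanged, which is precisely \eqref{eq:B'_a}.

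For \eqref{eq:B'_I}, the coefficients $B'_{IJK}$ are by construction arbitrary $G$-invariant functions alternating in the indices, so I would simply \emph{define} $E_{IJK}:=B'_{IJK}-M_I^\rho M_J^\sigma M_K^\tau B_{\rho\sigma\tau}$ and then check the two required properties. It is alternating in $I,J,K$ because relabelling the dummy indices $\rho,\sigma,\tau$ converts a transposition of $I,J,K$ into a transposition of $\rho,\sigma,\tau$, under which $B_{\rho\sigma\tau}$ changes sign. It is $G$-invariant because the $M$-coefficients are $G$-invariant and every $B_{\rho\sigma\tau}$ is $G$-invariant: if all three indices exceed $\ell$ this is the standing hypothesis on the free coefficients $B_{IJK}$, and if at least one index is $\leq\ell$ the alternating property reduces $B_{\rho\sigma\tau}$ to $\pm\langle[X_b,X_\beta],X_\gamma\rangle$, which is invariant since the $X_\alpha$ are equivariant and the metric is $G$-invariant (as already observed in the proof of Theorem~\ref{Th:skew-gauge-mom}).

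I do not anticipate a genuine obstacle: the content of the lemma is the statement that the ``gauge block'' of the structure coefficients transforms tensorially, whereas the ``$Y$-block'' transforms tensorially only up to the discrepancy $E_{IJK}$. The only points that need care are keeping track of which coefficients are prescribed by \eqref{eq:appendix-B-coefficients} and which are free, and observing that the vanishing $M_b^I=0$ is exactly what permits summing the contracted index over the full range in \eqref{eq:B'_a}; the computation underlying \eqref{eq:B'_a} then carries over verbatim to the general strongly nonholonomic setting.
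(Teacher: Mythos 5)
Your proof is correct and follows essentially the same route as the paper's: \eqref{eq:nu-mu} by direct substitution, \eqref{eq:B'_a} by expanding the Lie bracket and killing the two derivative terms using (a) constancy of the $M_b^c$ (so that sections of $D$ annihilate them) and (b) tangency of $X_c$ to the group orbits together with $G$-invariance of $M_\beta^\sigma$, then invoking $M_b^I=0$, and \eqref{eq:B'_I} by taking it as the definition of $E_{IJK}$ and checking the alternating and invariance properties. The only quibble is a slip of phrasing where you attach ``annihilated by every section of $D$'' to the \emph{strongly} nonholonomic case, whereas that is the fallback in the general case (the strongly nonholonomic hypothesis is what forces the $M_b^c$ to be genuine constants); the conclusion $X_\sigma[M_b^c]=0$ holds either way, so nothing is affected.
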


Note that since $M_b^I=0$  \eqref{eq:M is upper triangular}, all the non-zero terms on the right-hand side of \eqref{eq:B'_a} involve only  coefficients of the form $B_{b\sigma\tau}$, and not the $B_{IJK}$.

\begin{proof} The expression \eqref{eq:nu-mu} follows immediately from the relation $\nu^\alpha=\Mbar^\alpha_\beta\mu^\beta$ given above.

For the $B'$ coefficients, expand $B'$ in terms of the $X_\alpha$:
\begin{eqnarray*}
B'_{b\beta\gamma} &=& \left<[Y_b,Y_\beta],\,Y_\gamma\right> \\
&=& \left<[M_b^cX_c,M_\beta^\sigma X_\sigma],\,M_\gamma^\tau X_\tau\right> \\
&=& M_b^cM_\beta^\sigma M_\gamma^\tau B_{b\sigma\tau}
       + M_b^c M_\gamma^\tau X_c(M_\beta^\sigma)\left<X_\sigma,X_\tau\right> 
       -  M_\beta^\sigma M_\gamma^\tau X_\sigma(M_b^c)\left<X_c,X_\tau\right>.
\end{eqnarray*}
However, the final two terms both vanish because firstly $X_c$ is tangent to the group orbit and $M_\beta^\sigma$ is invariant (so constant on group orbits), and secondly $M_b^c$ is constant.  Thus, $B'_{b\alpha\beta} = M_b^cM_\alpha^\sigma M_\beta^\tau B_{c\sigma\tau}$.  The first equation \eqref{eq:B'_a} then follows from \eqref{eq:M is upper triangular}. 

Equation \eqref{eq:B'_I} can be taken as a definition of $E_{IJK}$, where the invariance and the alternating structure is clear. 
\end{proof}

Recall that for each $q$ the subspace $\mathcal{S}_q\subset T_qQ$ is defined to be $\mathcal{S}_q=D_q\cap \g\cdot q$. 
We also write $\mathcal{S}_0\subset\mathcal{S}$ to be the sub-distribution spanned by the generators of the gauge momenta, which is therefore assumed to be of constant rank.   Recall that $W$ is the distribution complementary to $D$ on which all $\mu^\alpha$ vanish.

\begin{proposition}\label{prop:transformation of Lambda}
On $U\cap V$, the difference $\Psi:=\Lambda-\Lambda'$ is an invariant 3-form annihilating $\mathcal{S}_0\oplus W$, so is of the form 
$$\Psi = E_{IJK}\,\mu^I\wedge\mu^J\wedge \mu^K.$$
\end{proposition}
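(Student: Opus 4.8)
The plan is to verify the three parts of the statement in turn: that $\Psi:=\Lambda-\Lambda'$ is $G$-invariant, that it annihilates $\mathcal{S}_0\oplus W$, and that the displayed coframe expression is then forced. The first is immediate: both $\Lambda$ and $\Lambda'$ are $G$-invariant --- their coefficients are $G$-invariant functions and the dual coframes $\{\mu^\alpha\}$, $\{\nu^\alpha\}$ are $G$-invariant because $W$ is --- so $\Psi$ is $G$-invariant; and both are sections of $\wedge^3(D^*)$ by construction, so each of them, and hence $\Psi$, vanishes on $W$. The real content is thus that $\Psi$ annihilates $\mathcal{S}_0$. Since on $U\cap V$ the distribution $\mathcal{S}_0$ is spanned by the gauge momentum generators $X_1,\dots,X_\ell$ (equivalently by $Y_1,\dots,Y_\ell$), and since $\Psi$ already vanishes on $W$ while $\{X_\alpha\}$ spans $D$, this reduces to proving that $\Lambda(X_b,X_\beta,X_\gamma)=\Lambda'(X_b,X_\beta,X_\gamma)$ for every $b\in\{1,\dots,\ell\}$ and all $1\le\beta,\gamma\le r$.

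To prove this equality I would expand its right-hand side in the $\{Y_\alpha\}$ basis. By \eqref{eq:M is upper triangular}, $X_b=\Mbar_b^cY_c$ is a combination of the gauge generators $Y_c$ alone, so by multilinearity $\Lambda'(X_b,X_\beta,X_\gamma)=\Mbar_b^c\Mbar_\beta^\delta\Mbar_\gamma^\eta\,B'_{c\delta\eta}$, and by \eqref{eq:B'_a} of Lemma~\ref{lemma:B-expand} (again using $M_c^I=0$) one has $B'_{c\delta\eta}=M_c^{c'}M_\delta^\rho M_\eta^\sigma B_{c'\rho\sigma}$. Contracting the pairings $\Mbar_b^cM_c^{c'}=\delta_b^{c'}$ (which uses $\Mbar_b^I=0$), $\Mbar_\beta^\delta M_\delta^\rho=\delta_\beta^\rho$ and $\Mbar_\gamma^\eta M_\eta^\sigma=\delta_\gamma^\sigma$ --- all instances of $\Mbar$ being the inverse of $M$ --- collapses the right-hand side to $\delta_b^{c'}\delta_\beta^\rho\delta_\gamma^\sigma B_{c'\rho\sigma}=B_{b\beta\gamma}=\Lambda(X_b,X_\beta,X_\gamma)$. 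The substantive input here is \eqref{eq:B'_a}, whose proof is where the hypotheses genuinely enter: the two correction terms in the expansion of $\langle[Y_c,Y_\delta],Y_\eta\rangle$ vanish precisely because $Y_c$ is a gauge momentum generator, hence tangent to the group orbits, because the transition coefficients $M_\beta^\sigma$ are $G$-invariant (so constant along orbits), and because the gauge-to-gauge block $M_c^{c'}$ is constant. Granting \eqref{eq:B'_a}, the present step is bookkeeping with $\Mbar$, and I expect no further obstacle.

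With $\Psi$ now known to be $G$-invariant and to annihilate $\mathcal{S}_0\oplus W$, the coframe expression follows automatically. Writing $\Psi=\tfrac16\Psi_{\alpha\beta\gamma}\,\mu^\alpha\wedge\mu^\beta\wedge\mu^\gamma$ in the adapted coframe on $U\cap V$, vanishing on $W$ is built in, while $\iota_{X_b}\Psi=0$ forces $\Psi_{b\beta\gamma}=0$; by the alternating property $\Psi_{\alpha\beta\gamma}=0$ whenever any of $\alpha,\beta,\gamma$ lies in $\{1,\dots,\ell\}$. Only the components $\Psi_{IJK}$ with all three indices in $\{\ell+1,\dots,r\}$ remain, whence $\Psi=E_{IJK}\,\mu^I\wedge\mu^J\wedge\mu^K$ with $E_{IJK}:=\tfrac16\Psi_{IJK}$ a $G$-invariant family of functions, alternating in the indices (invariance coming from that of $\Psi$ and of the $\mu^I$). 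This completes the proof.
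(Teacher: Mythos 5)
Your proof is correct and follows essentially the same route as the paper's: both rest on Lemma~\ref{lemma:B-expand} together with the block structure \eqref{eq:M is upper triangular} and the contraction $\Mbar M=\mathrm{Id}$. The only difference is presentational --- you evaluate $\Lambda-\Lambda'$ on frame vectors and show that contraction with each gauge generator vanishes, whereas the paper expands $\Lambda'$ directly in the $\mu$-coframe to read off $\Lambda'=\Lambda+E'_{IJK}\,\mu^I\wedge\mu^J\wedge\mu^K$; the index algebra is identical.
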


\begin{proof}
Using the lemma above,
\begin{eqnarray*}
\Lambda' &=& \tfrac16 B'_{\alpha\beta\gamma}\,\nu^\alpha\wedge\nu^\beta\wedge\nu^\gamma \\
&=& \tfrac16 B'_{\alpha\beta\gamma}\,\Mbar^\alpha_\delta\Mbar^\beta_\eps\Mbar^\gamma_\eta\, \mu^\delta\wedge\mu^\eps\wedge\mu^\eta\\
&=& \tfrac16  M_\alpha^\rho M_\beta^\sigma M_\gamma^\tau \,B_{\rho\sigma\tau} \,\Mbar^\alpha_\delta\Mbar^\beta_\eps\Mbar^\gamma_\eta\, \mu^\delta\wedge\mu^\eps\wedge\mu^\eta 
    + E_{IJK}\Mbar^I_\delta\Mbar^J_\eps\Mbar^K_\eta\, \mu^\delta\wedge\mu^\eps\wedge\mu^\eta \\
&=& \tfrac16 \delta_\delta^\rho \delta_\eps^\sigma \delta_\eta^\tau B_{\rho\sigma\tau} \, \mu^\delta\wedge\mu^\eps\wedge\mu^\eta 
    + E_{IJK}\Mbar^I_R\Mbar^J_S\Mbar^K_T\, \mu^R\wedge\mu^S\wedge\mu^T \\
&=& \tfrac16  B_{\delta\eps\eta}\, \mu^\delta\wedge\mu^\eps\wedge\mu^\eta 
    + E_{IJK}\Mbar^I_R\Mbar^J_S\Mbar^K_T\, \mu^R\wedge\mu^S\wedge\mu^T \\
&=& \Lambda + E'_{IJK}\, \mu^I\wedge\mu^J\wedge\mu^K
\end{eqnarray*}
as required (the final step involves relabelling $RST$ to $IJK$).

\end{proof}

\subsection{Global considerations} As was pointed out in Remark~\ref{Rmk:ExistBasis}, if $k-\ell<3$ then $\Lambda=\Lambda'$ and the 3-form is uniquely defined.  There remains the question of whether when $k-\ell\geq 3$, the locally-defined 3-forms $\Lambda$ can be chosen to agree everywhere, by choosing the $B_{IJK}$ suitably.  The answer is yes, by a standard  partition of unity argument used in \v{C}ech cohomology, as follows.

\begin{proposition}\label{prop:global consistency}
Let $\mathfrak{U}=\{U_i\}$ be a cover of $Q_f$ by $G$-invariant open sets, and on each $U_i$ suppose a 3-form $\Lambda_i$ is selected, of the form \eqref{eq:appendix-3-form}.  There exist $G$-invariant 3-forms $\Psi_i$ on $U_i$ annihilating $\mathcal{S}_0\oplus W$,  such that the forms 
$$\widetilde{\Lambda}_i :=\Lambda_i + \Psi_i$$
define a global 3-form on $Q$; that is on each intersection $U_i\cap U_j$, $\widetilde{\Lambda}_i = \widetilde{\Lambda}_j$.
\end{proposition}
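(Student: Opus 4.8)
The plan is to read the data $\{\Lambda_i\}$ as a \v{C}ech $0$-cochain whose coboundary is a cocycle valued in a \emph{fine} sheaf, and then to correct the $\Lambda_i$ by subtracting a primitive of that cocycle. First I would set $\theta_{ij}:=\Lambda_i-\Lambda_j$ on each overlap $U_i\cap U_j$. Each $\Lambda_i$ is of the form \eqref{eq:appendix-3-form}, hence built from some adapted basis of $D$ on $U_i$; since any two adapted bases agree on the overlap up to a change of basis of the type analysed in Lemma~\ref{lemma:B-expand} (upper triangular, with a constant block on $\mathcal{S}_0$ and invariant entries elsewhere), Proposition~\ref{prop:transformation of Lambda} applies and shows that every $\theta_{ij}$ is a $G$-invariant $3$-form annihilating $\mathcal{S}_0\oplus W$. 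Equivalently, $\theta_{ij}$ is a $G$-invariant smooth section over $U_i\cap U_j$ of the vector bundle $\mathcal F:=\wedge^3\big((\mathcal{S}_0\oplus W)^\circ\big)\subset\wedge^3 T^*Q$ (this is a genuine subbundle because $\mathcal{S}_0$ is assumed to have constant rank). By construction $\theta_{ii}=0$, $\theta_{ji}=-\theta_{ij}$, and $\theta_{ij}+\theta_{jk}=\theta_{ik}$ on triple overlaps, so $\{\theta_{ij}\}$ is a \v{C}ech $1$-cocycle for the sheaf $\underline{\mathcal F}^{\,G}$ of $G$-invariant smooth sections of $\mathcal F$.

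Next I would invoke the vanishing of $H^1(Q_f,\underline{\mathcal F}^{\,G})$. Since the $G$-action on $Q_f$ is free and proper, the quotient $Q_f/G$ is a smooth paracompact manifold, $\mathcal F$ descends to a smooth vector bundle on it, and $\underline{\mathcal F}^{\,G}$ is the (fine) sheaf of sections of that descended bundle; alternatively one works directly on $Q_f$ using a $G$-invariant partition of unity $\{\varphi_k\}$ subordinate to $\{U_k\}$, which exists precisely because the action is proper. Either way the cocycle is a coboundary: putting
\[
\Psi_i:=\sum_k\varphi_k\,(\Lambda_k-\Lambda_i),
\]
where each summand is extended by zero off $\operatorname{supp}\varphi_k\subset U_k$, produces smooth $G$-invariant sections $\Psi_i\in\mathcal F(U_i)$, and on $U_i\cap U_j$ the cocycle identity gives $\Psi_i-\Psi_j=\sum_k\varphi_k(\theta_{ki}-\theta_{kj})=-\theta_{ij}=\Lambda_j-\Lambda_i$. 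Hence the forms $\widetilde{\Lambda}_i:=\Lambda_i+\Psi_i$ agree on every overlap and patch to a single $G$-invariant $3$-form $\widetilde\Lambda$ on $Q_f$. It remains only to check that $\widetilde\Lambda$ is of the required type for Lemma~\ref{L:Global3form}: each $\Psi_i$ annihilates $W$, so $\widetilde\Lambda$ is a section of $\wedge^3 D^*$; and since $\Psi_i$ annihilates $\mathcal{S}_0$, which contains every gauge-momentum generator, adding it changes only the unconstrained coefficients $B_{IJK}$ in \eqref{eq:appendix-3-form} and leaves the relations \eqref{eq:appendix-B-coefficients} untouched, so $\widetilde\Lambda$ still satisfies condition (ii) of Lemma~\ref{L:Global3form} (equivalently \eqref{Eq:Lambda coefficients-mod}) with respect to any adapted basis.

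The step I expect to be the main obstacle is the clean justification of $H^1(Q_f,\underline{\mathcal F}^{\,G})=0$ when $G$ is noncompact, i.e.\ producing the $G$-invariant partition of unity: one must either pass to the quotient manifold $Q_f/G$ and argue there, or build the invariant partition directly from properness (for instance by using invariant tubular neighbourhoods and integrating a slice-supported cut-off against a right-invariant density). Once Proposition~\ref{prop:transformation of Lambda} is available, the remaining ingredients — the cocycle identity, the coboundary formula, and the verification that the correction preserves conditions (i)–(ii) of Lemma~\ref{L:Global3form} — are routine.
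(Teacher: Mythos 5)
Your proposal is correct and follows essentially the same route as the paper: interpret the differences $\Lambda_i-\Lambda_j$ as a \v{C}ech $1$-cocycle valued in invariant sections of $\wedge^3\bigl((\mathcal{S}_0\oplus W)^\circ\bigr)$ (via Proposition~\ref{prop:transformation of Lambda}) and kill it with a $G$-invariant partition of unity, your formula $\Psi_i=\sum_k\varphi_k(\Lambda_k-\Lambda_i)$ being exactly the paper's coboundary construction. Your extra care about why an invariant partition of unity exists for a proper action, and your explicit check that the correction preserves conditions (i)--(ii) of Lemma~\ref{L:Global3form}, are welcome but do not change the argument.
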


\begin{proof}
Let $\{\phi_i\}$ be a partition of unity subordinate to the cover $\mathfrak{U}$, by $G$-invariant functions.  On each $U_i\cap U_j$ let $\Psi_{ij}=\Lambda_i - \Lambda_j$. By Proposition\,\ref{prop:transformation of Lambda}, the $\Psi_{ij}$ are invariant forms annihilating $\mathcal{S}_0\oplus W$, and they clearly satisfy the cocycle condition,
$$\Psi_{ij}+\Psi_{jk}+\Psi_{ki}=0,\quad\text{wherever } U_i\cap U_j\cap U_k\neq\emptyset.$$ 
Now, for each $i$ define
$$\Psi_i = \sum_k \phi_k \Psi_{ik}.$$
(Note that $\Psi_{ii}=0$, and the sum is over all $k$.) Then with $\widetilde{\Lambda}_i=\Lambda_i+\Psi_i$,
\begin{eqnarray*}
\widetilde{\Lambda}_i- \widetilde{\Lambda}_j &=& \Lambda_i-\Lambda_j + \Psi_i-\Psi_j \\ 
&=& \Psi_{ij} + \sum_k\phi_k(\Psi_{ik}-\Psi_{jk}) \\
 &=& \Psi_{ij} + \sum_k\phi_k \Psi_{ji} \\ 
 &=& \Psi_{ij} + \Psi_{ji} \ = \ 0,
\end{eqnarray*}
as required.
\end{proof}

\noindent \textbf{Funding:} This research was made possible by a Newton Advanced Fellowship from the Royal Society, ref: 
NA140017.

\noindent \textbf{Conflict of Interest:} The authors declare that they have no conflict of interest.

\end{document}